\def\@cline#1-#2\@nil{%
  \omit
  \@multicnt#1%
  \advance\@multispan\m@ne
  \ifnum\@multicnt=\@ne\@firstofone{&\omit}\fi
  \@multicnt#2%
  \advance\@multicnt-#1%
  \advance\@multispan\@ne
  \leaders\hrule\@height\arrayrulewidth\hfill
  \cr
  \noalign{\nobreak\vskip-\arrayrulewidth}}
\numberwithin{Theorem}{section}
\numberwithin{Definition}{section}
\numberwithin{Lemma}{section}
\numberwithin{Algorithm}{section}
\numberwithin{equation}{section}
\numberwithin{Proposition}{section}
\newtheorem{assumption}{Assumption}
\newtheorem{remark}{Remark}[section]
\newtheorem{theorem}{Theorem}[section]
\newtheorem{lemma}[theorem]{Lemma} 
\newtheorem{definition}[theorem]{Definition}
\newtheorem{proposition}[theorem]{Proposition}
\def\DC#1#2#3{\tensor*[^{\mathrm{C}}_{#2}]{\mathop{D}\nolimits}{^{#1}_{#3}}}
\def\DRL#1#2#3{\tensor*[^{\mathrm{RL}}_{#2}]{\mathop{D}\nolimits}{^{#1}_{#3}}}
\def\DR#1#2{\tensor*[^{\mathrm{R}}]{\mathop{D}\nolimits}{^{#1}_{#2}}}
\begin{document}
	\title{Fast Solution Methods for Convex Fractional Differential Equation Optimization}
\author[*]{Spyridon Pougkakiotis}
\author[*]{John W. Pearson}
\author[*]{Santolo Leveque}
\author[*]{Jacek Gondzio}
\affil[*]{School of Mathematics, University of Edinburgh\newline \newline Technical Report ERGO 19--014}
\maketitle

\begin{abstract}
\noindent In this paper, we present numerical methods suitable for solving convex quadratic Fractional Differential Equation (FDE) constrained optimization problems, with box constraints on the state and/or control variables. We develop an Alternating Direction Method of Multipliers (ADMM) framework, which uses preconditioned Krylov subspace solvers for the resulting sub-problems. The latter allows us to tackle a range of Partial Differential Equation (PDE) optimization problems with box constraints, posed on space-time domains, that were previously out of the reach of state-of-the-art preconditioners. In particular, by making use of the powerful Generalized Locally Toeplitz (GLT) sequences theory, we show that any existing GLT structure present in the problem matrices is preserved by ADMM, and we propose some preconditioning methodologies that could be used within the solver, to demonstrate the generality of the approach. Focussing on convex quadratic programs with time-dependent 2-dimensional FDE constraints, we derive multilevel circulant preconditioners, which may be embedded within Krylov subspace methods, for solving the ADMM sub-problems. Discretized versions of FDEs involve large dense linear systems. In order to overcome this difficulty, we design a recursive linear algebra, which is based on the Fast Fourier Transform (FFT). We manage to keep the storage requirements linear, with respect to the grid size $N$, while ensuring an order $N \log N$ computational complexity per iteration of the Krylov solver. We implement the proposed method, and demonstrate its scalability, generality, and efficiency, through a series of experiments over different setups of the FDE optimization problem. 
\end{abstract}
\section{Introduction}
\par Optimization problems with Differential Equations (Partial (PDEs) or Ordinary (ODEs)) as constraints have received a great deal of attention within the applied mathematics and engineering communities, due in particular to their wide applicability across many fields of science. In addition to classical differential equation constraints, one may also use Fractional Differential Equations (FDEs) in order to model processes that could not otherwise be modeled using integer derivatives. In fact, there is a wide and increasing use of FDEs in the literature. Among other processes, FDEs have been used to model viscoelasticity (e.g. \cite{paper_31}), chaotic systems (e.g. \cite{paper_32}), turbulent flow, or anomalous diffusion (e.g. \cite{paper_33}). In particular, since the fractional operator is \emph{non-local}, problems with non-local properties can frequently be modeled accurately using FDEs (see \cite{Podlubny} for an extended review). 
\par Availability of closed form solutions for FDEs is rare, and hence various numerical schemes for solving them have been developed and analyzed in the literature (see \cite{MST,MT1,MT2} for finite difference, and \cite{paper_35,paper_34} for finite element methods). Such numerical schemes produce dense matrices, making the solution or even the storage of FDE-constrained optimization problems extremely difficult for fine grids. Naturally, this behavior is even more severe in the case of multidimensional FDEs. In light of the previous, employing standard (black-box) direct approaches for solving such problems requires $O(N^3)$ operations and $O(N^2)$ storage, where $N$ is the number of grid points. Iterative methods with general purpose preconditioners also suffer from similar issues. 
\par Various specialized solution methods have been proposed in the literature, aiming at lowering the computational and storage cost of solving such problems (see for instance \cite{CD,paper_2,paper_61,HKKS,paper_60,ZZ}). One popular and effective approach is to employ tensor product solvers. Such specialized methods have been proposed for solving high-dimensional FDE-constrained inverse problems with great success, even for very fine discretizations (see for example \cite{paper_2,HKKS} and the references therein). While these solvers are highly scalable (with respect to the grid size), to date they have been tailored solely to problems with specific cost functionals and without additional algebraic constraints. Another popular approach is based on the observation that multidimensional FDEs possess a \emph{multilevel Toeplitz}-like structure. It is well known that such matrices can be very well approximated by banded multilevel Toeplitz (see for example \cite{paper_58,paper_60}) or \emph{multilevel circulant} matrices (see \cite{paper_3,paper_1,paper_57,paper_8,paper_7}). The former are usually sparse and can be inverted using specially designed multigrid or factorization methods, while the latter can be inverted or applied to a vector in only $O(N \log N)$ operations using the Fast Fourier Transform (FFT) (e.g. \cite{paper_16}). The idea is to apply a Krylov subspace solver, supported by a banded Toeplitz or circulant preconditioner, in order to solve the optimality conditions of the problem. One is able to redesign the underlying linear algebra, in order to achieve an $O(N \log N)$ iteration complexity for the Krylov solver, with $O(N)$ overall storage requirements (see for example \cite{paper_8,paper_7,paper_6}). While such solution methods are certainly more general (although usually slower), when compared to tensor product solvers, they remain rather sensitive in terms of the underlying structure. In particular, to the authors' knowledge, no such method has been proposed for the solution of more general FDE optimization problems, for instance those which include box constraints on the state and control variables. We highlight that a time-independent problem, with box constraints on the control, is studied in \cite{DC}, and the authors attempt to solve it using a Limited-memory Broyden--Fletcher--Goldfarb--Shanno (L--BFGS) method.
\par In this paper, we present an optimization method suitable for solving convex quadratic PDE-constrained optimization problems with box constraints on the state and control variables. In particular, we assume that we are given an arbitrary PDE constrained inverse problem, an associated discretization method,
and that the resulting sequences of discrete matrices belong to the class of
Generalized Locally Toeplitz (GLT) sequences (we refer the reader to \cite{GLT_theory_VOL_1,GLT_theory_VOL_2,Tilli_LT} for a comprehensive overview of the powerful GLT theory). Then, we propose the use of an Alternating Direction Method of Multipliers (ADMM) for solving the discretized optimization problems. We employ ADMM in order to separate the equality from the inequality constraints. As a consequence of this choice, we show that the linear systems required to be solved during the iterations of ADMM preserve the GLT structure of the initial problem matrices. Using this structure, we present and analyze some general methodologies for efficiently preconditioning such linear systems, and solving them using an appropriate Krylov subspace method. The Krylov subspace method is in turn, under certain mild assumptions, expected to converge in a number of iterations independent of the grid size. Subsequently, we focus on a certain class of convex quadratic optimization problems with FDE constraints. In particular, we consider time-dependent 2-dimensional FDEs, and we precondition the associated discretized matrices using multilevel circulant preconditioners. We manage to keep the storage requirements linear, with respect to the grid size $N$, while ensuring an order $N\log N$ computational complexity per iteration of the Krylov solver inside ADMM. We implement the proposed method, and demonstrate its robustness and efficiency, through a series of experiments over different setups of the FDE optimization problem.
\par This paper is structured as follows. In Section \ref{section Theoretical background}, we provide the relevant theoretical background as well as the notation used throughout the paper. Subsequently, in Section \ref{Section A structure preserving method} we present the proposed ADMM framework, as well as possible preconditioning strategies that could be used to accelerate the solution of the ADMM sub-problems, given the assumption that the associated matrices possess a GLT structure. In Section \ref{section FDE problem} we present the FDE-constrained optimization problem under consideration. Then, in Section \ref{Section: Toeplitz and Circulant Matrices}, we propose the use of a multilevel circulant preconditioner for approximating multilevel Toeplitz matrices arising from the discretization of the FDE under consideration, while demonstrating that such a preconditioner is effective for the problem at hand. In Section \ref{Implementation and Numerical Results}, we discuss the implementation details of the proposed approach and present some numerical results. Finally, in Section \ref{Section conclusions}, we state our conclusions.
\section{Notation and Theoretical Background} \label{section Theoretical background}

\par In this section, we introduce some notation and provide the theoretical background that will be used in the rest of this manuscript. Firstly, we introduce the notion of $d$-indices which will allow us to compactly represent multilevel matrices. For brevity of presentation, we only discuss the crucial notions that will be used in this paper. A more complete presentation of the notation and theory of this section can be found in \cite{GLT_theory_VOL_1,GLT_theory_VOL_2}. A reader familiar with the theory of GLT sequences can skip directly to Section \ref{Section A structure preserving method}.

\begin{definition}
A multi-index $\bm{i}$ of size $d$ ($d$-index) is a row vector in $\mathbb{Z}^d$ with components $i_1,\ldots,i_d$. Using this notation, we define the following notions:
\begin{itemize}
\item $\bm{0,1,2,}\ldots$, are the row vectors of all zeros, ones, twos, etc.
\item $N(\bm{i}) = \prod_{j=1}^d i_j$ and we write $\bm{i} \rightarrow \infty$ to indicate that $\min(\bm{i}) \rightarrow \infty$.
\item Given two d-indices $\bm{h,k}$, we write $\bm{h} \leq \bm{k}$ to express that $h_j \leq k_j$, for all $j \in \{1,\ldots,d\}$. The d-index range $\bm{h},\ldots,\bm{k}$ is a set of cardinality $N(\bm{k-h+1})$ given by $\{\bm{j} \in \mathbb{Z}^d: \bm{h} \leq \bm{j} \leq \bm{k} \}$. The latter set is assumed to be ordered under the lexicographical ordering, that is:
$$ \bigg[ \ldots \big[ [(j_1,\ldots,j_d)]_{j_d = h_d,\ldots,k_d}\big]_{j_{d-1} = h_{d-1},\ldots,k_{d-1}} \ldots \bigg]_{j_1 = h_1,\ldots,k_1}.$$
\item Let a d-index $\bm{m} \in \mathbb{N}^d$, and define $\bm{x} = [x_{\bm{i}}]_{\bm{i} = \bm{1}}^{\bm{m}}$ ($\bm{X} = [x_{\bm{i},\bm{j}}]_{\bm{i},\bm{j} = 1}^{\bm{m}} $, respectively). Then $\bm{x}$ ($\bm{X}$, respectively) is a vector of size $N(\bm{m})$ (a matrix of size $N(\bm{m}) \times N(\bm{m})$, respectively).
\item Any operation involving d-indices that has no meaning in the vector space $\mathbb{Z}^d$ will be interpreted in a componentwise sense.
\end{itemize}
\end{definition}
\noindent  A matrix $A$ of size $N$ is a $d$-level matrix with level orders $n_1,\ldots,n_d$ if $N = n_1  n_2 \cdots n_d$ and it is partitioned into $n_1^2$ square blocks of size $\frac{N}{n_1}$, each of which is partitioned into $n_2^2$ blocks of size $\frac{N}{n_1 n_2}$, and so on until the last $n_d^2$ blocks of size $1$. Then, $A$ can be written as $A = [A_{\bm{ij}}]_{\bm{i},\bm{j} = \bm{1}}^{\bm{n}}$, where $A_{\bm{ij}} = A_{i_1 j_1;\ldots;i_d j_d}$, for $\bm{i},\bm{j} = \bm{1},\ldots,\bm{n}$.
\par Next, we define the notion of a matrix-sequence, which is a fundamental element for studying the asymptotic spectral behavior of structured matrices arising from some discretization of a physical process. In the rest of this manuscript, given an arbitrary matrix $A$, $\sigma(A)$ denotes the set of singular values of the matrix, while $\lambda(A)$ denotes the set of eigenvalues of the matrix $A$ (given that they exist).

\begin{definition}
A matrix-sequence is a sequence of the form $\{A_n\}_n$, where $n$ varies over some infinite subset of $\mathbb{N}$, $A_n$ is a square matrix of size $d_n$, and $d_n \rightarrow \infty$ as $n \rightarrow \infty$. In particular, a $d$-level matrix sequence is a sequence of the form $\{A_{\bm{n}}\}_n$, where $A_{\bm{n}}$ is a matrix of size $N(\bm{n})\times N(\bm{n})$, $n$ varies over some infinite subset of $\mathbb{N}$, and $\bm{n} = \bm{n}(n) \in \mathbb{N}^d$ is such that $\bm{n} \rightarrow \infty$, as $n \rightarrow \infty$. Given a d-level matrix-sequence $\{A_{\bm{n}}\}_n$, we say that it is sparsely unbounded (and denote that as s.u.) if:
$$\lim_{M \rightarrow \infty} \limsup_{n \rightarrow \infty} \frac{\#\{i \in \{1,\ldots,N(\bm{n})\}: \sigma_{i}(A_{\bm{n}}) > M \}}{N(\bm{n})} = 0,$$
\noindent where $\#S$ denotes the cardinality of a set $S$. Similarly, we say that $\{A_{\bm{n}}\}_n$ is sparsely vanishing (and denote that as s.v.) if:

$$ \lim_{M \rightarrow \infty} \limsup_{n \rightarrow \infty} \frac{\#\{i \in \{1,\ldots,N(\bm{n})\}: \sigma_{i}(A_{\bm{n}}) < 1/M \}}{N(\bm{n})} = 0,$$
\noindent where we assume that $1/\infty = 0$.
\end{definition}

\par An important notion is that of clustering. In order to define it we let, for every $z \in \mathbb{C}$ and any $\epsilon > 0$, $D(z,\epsilon)$ represent the disk with center $z$ and radius $\epsilon$. If $S \subseteq \mathbb{C}$ and $\epsilon > 0$, $D(S,\epsilon)$ denotes the $\epsilon$-expansion of $S$, defined as $D(S,\epsilon) = \bigcup_{z \in S}D(z,\epsilon)$.
\begin{definition} \label{definition: clustering}
Let $\{A_n\}_n$ be a sequence of matrices, with $A_n$ of size $d_n \times d_n$, and let $S \subseteq \mathbb{C}$ be a non-empty subset of $\mathbb{C}$. We say that $\{A_n\}_n$ is strongly clustered at $S$ (in the sense of eigenvalues) if $\forall\ \epsilon >0$ we have:
\[\#\{j \in \{1,\ldots,d_n\} : \lambda_j(A_n) \notin D(S,\epsilon)\} = O(1), \]
\noindent and weakly clustered at $S$ if $\forall\ \epsilon > 0$,
\[ \#\{j \in \{1,\ldots,d_n\} : \lambda_j(A_n) \notin D(S,\epsilon)\} = o(d_n).\]
Clustering in the sense of singular values is defined analogously.
\end{definition}

\par Let $f_m,\ f: D \subseteq \mathbb{R}^d \mapsto \mathbb{C}$ be measurable functions, with respect to the Lebesgue measure $\mu_d$ in $\mathbb{R}^d$. We say that $f_m \rightarrow f$ in measure if, for every $\epsilon > 0$, $\underset{m \rightarrow \infty}{\lim} \mu_d \big(\{|f_m -f| > \epsilon\}\big) = 0.$ Furthermore, $f_m \rightarrow f$ a.e. (almost everywhere) if $\mu_d \big(\{f_m \nrightarrow f\}\big) = 0$.

\begin{lemma} \label{lemma convergence in measure}
Let $f_m,\ g_m,\ f,\ g: D \subseteq \mathbb{R}^d \mapsto \mathbb{C}$ be measurable functions.
\begin{enumerate}
\item If $f_m \rightarrow f$ in measure, then $|f_m| \rightarrow |f|$ in measure.
\item If $f_m \rightarrow f$ in measure and $g_m \rightarrow g$ in measure, then $\alpha f_m + \beta g_m \rightarrow \alpha f + \beta g$ in measure for all $\alpha,\ \beta \in \mathbb{C}$.
\item If $f_m \rightarrow f$ in measure, $g_m \rightarrow g$ in measure, and $\mu_d(D) < \infty$, then $f_m g_m \rightarrow fg$ in measure.
\end{enumerate}
\begin{proof}
This is stated in \cite[Lemma 2.3]{GLT_theory_VOL_1} and proved in \cite[Corollary 2.2.6]{BogachevMeasureTheory}. 
\end{proof}
\end{lemma}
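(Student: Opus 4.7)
The lemma splits into three increasingly subtle statements about convergence in measure, and the plan is to treat each in turn, with parts 1 and 2 resting on elementary inclusions of sets and part 3 requiring a truncation argument that exploits the hypothesis $\mu_d(D)<\infty$.

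For part 1, I would use the reverse triangle inequality $\bigl||f_m|-|f|\bigr|\le|f_m-f|$, which gives the set inclusion $\{\,\bigl||f_m|-|f|\bigr|>\epsilon\,\}\subseteq\{|f_m-f|>\epsilon\}$. Applying $\mu_d$ and letting $m\to\infty$ completes the argument immediately. For part 2, the cases $\alpha=0$ or $\beta=0$ being trivial, I would start from
\[
\bigl|(\alpha f_m+\beta g_m)-(\alpha f+\beta g)\bigr|\le|\alpha|\,|f_m-f|+|\beta|\,|g_m-g|,
\]
so that the event $\{|(\alpha f_m+\beta g_m)-(\alpha f+\beta g)|>\epsilon\}$ is contained in $\{|f_m-f|>\epsilon/(2|\alpha|)\}\cup\{|g_m-g|>\epsilon/(2|\beta|)\}$. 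Subadditivity of $\mu_d$ and the two hypotheses then drive the measure to zero.

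The real work is in part 3, and here the finite measure assumption plays a decisive role. I would split the error as $f_m g_m-fg=(f_m-f)g_m+f(g_m-g)$ and, for the first summand, note that for any $K>0$,
\[
\{|(f_m-f)g_m|>\epsilon\}\subseteq\{|f_m-f|>\epsilon/K\}\cup\{|g_m|>K\}.
\]
The first set has measure tending to zero by hypothesis. For the second, the main lemma I would establish is a \emph{tightness} property of $\{g_m\}$: namely, $\lim_{K\to\infty}\limsup_{m}\mu_d(\{|g_m|>K\})=0$. This is where $\mu_d(D)<\infty$ enters, since it allows one to combine the inclusion $\{|g_m|>K\}\subseteq\{|g_m-g|>K/2\}\cup\{|g|>K/2\}$ with the fact that on a finite measure set the a.e.\ finite function $g$ satisfies $\mu_d(\{|g|>K/2\})\to 0$ as $K\to\infty$. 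Once tightness is in hand, taking $\limsup_m$ and then $K\to\infty$ handles the first summand. The second summand $f(g_m-g)$ is treated symmetrically by swapping the roles of $f$ and $g_m$, using tightness of the constant sequence $\{f\}$ (which again needs $\mu_d(D)<\infty$). Combining the two estimates gives $f_m g_m\to fg$ in measure.

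I expect the only real obstacle to be the tightness step in part 3: it is the one place where finite measure cannot be avoided, since on an infinite measure space the product statement genuinely fails (e.g.\ $f_m\equiv 1/m$, $g_m(x)=x$ on $\mathbb{R}$). Apart from that, all three parts reduce to combining elementary set inclusions with subadditivity of $\mu_d$, so the proof is essentially bookkeeping once the right inequalities have been written down.
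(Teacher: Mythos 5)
Your argument is correct in all three parts: the reverse triangle inequality handles part 1, the inclusion $\{|(\alpha f_m+\beta g_m)-(\alpha f+\beta g)|>\epsilon\}\subseteq\{|f_m-f|>\epsilon/(2|\alpha|)\}\cup\{|g_m-g|>\epsilon/(2|\beta|)\}$ together with subadditivity handles part 2, and for part 3 the decomposition $f_m g_m-fg=(f_m-f)g_m+f(g_m-g)$ combined with the tightness estimate $\lim_{K\to\infty}\limsup_m\mu_d(\{|g_m|>K\})=0$ (valid because $g$ is $\mathbb{C}$-valued and $\mu_d(D)<\infty$, so $\mu_d(\{|g|>K/2\})\downarrow 0$ by continuity from above) is exactly the right mechanism; your counterexample $f_m\equiv 1/m$, $g_m(x)=x$ on $\mathbb{R}$ correctly shows the finiteness hypothesis cannot be dropped. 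The comparison with the paper is somewhat one-sided, however: the paper does not prove the lemma at all, but simply cites it as Lemma 2.3 of the GLT reference with a proof in Bogachev's measure theory text (Corollary 2.2.6). So what you have produced is a self-contained elementary proof of a statement the paper outsources to the literature; your argument is the standard one and is essentially what the cited reference carries out, so nothing is lost, and the only thing the citation buys the paper is brevity. The one point worth making explicit if you write this up fully is the final $\epsilon/2$ bookkeeping, i.e.\ the inclusion $\{|f_m g_m-fg|>\epsilon\}\subseteq\{|(f_m-f)g_m|>\epsilon/2\}\cup\{|f(g_m-g)|>\epsilon/2\}$, before taking $\limsup_m$ and then $K\to\infty$ in each piece.
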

\noindent Let $C_c(\mathbb{C})$ ($C_c(\mathbb{R})$, respectively) be the space of complex-(real-)valued continuous functions defined on $\mathbb{C}$ (or $\mathbb{R}$) with compact support. Given a field $\mathbb{K}$ ($=\mathbb{C}$ or $\mathbb{R}$) and a measurable function $g:D \subset \mathbb{R}^d \mapsto \mathbb{K}$, with $0 <\mu_d(D) < \infty$, define the functional:
$$\phi_g: C_c(\mathbb{K}) \mapsto \mathbb{C},\ \ \phi_g(F) = \frac{1}{\mu_d(D)}\int_D F(g(\bm{x}))\ \mathrm{d}\bm{x}.$$
\begin{definition} \label{Definition Spectral distribution}
Let $\{A_n\}_n$ be a matrix-sequence, with $A_n$ of size $d_n \times d_n$. We say that $\{A_n\}_n$ has an asymptotic eigenvalue (spectral) distribution described by a functional $\phi: C_c(\mathbb{C}) \mapsto \mathbb{C}$, and we write $\{A_n\}_n \sim_{\lambda} \phi$, if:
\[\lim_{n \rightarrow \infty} \frac{1}{d_n} \sum_{j=1}^{d_n} F(\lambda_j(A_n)) = \phi(F),\ \ \forall\ F \in C_c(\mathbb{C}).\]
\noindent If $\phi = \phi_{f}$ for some measurable function $f:D\subset \mathbb{R}^d \mapsto \mathbb{C}$, where $0 <\mu_d(D) < \infty$, we say that $\{A_n\}_n$ has an asymptotic  spectral distribution described by $f$ and we write $\{A_n\}_n \sim_{\lambda} f$. Then, $f$ is referred to as the  eigenvalue (spectral) symbol of $\{A_n\}_n$.
\end{definition}
\noindent We can define the asymptotic singular value distribution of a matrix sequence similarly to Definition \ref{Definition Spectral distribution} (see \cite[Definition 2.1]{paper_30}). In that case, we write $\{A_n\}_n \sim_{\sigma} f$.
\par Below we define two important classes of matrix sequences, namely diagonal sampling and Toeplitz matrix-sequences.

\begin{definition}
Let a function $v: [0,1]^d \mapsto \mathbb{C}$ be given. The $\bm{n}$-th diagonal sampling matrix generated by $v$ is denoted by $D_{\bm{n}}(v)$ and is defined by the following $N(\bm{n})\times N(\bm{n})$ diagonal matrix:
\[  D_{\bm{n}}(v) =\underset{\bm{i} = \bm{1},\ldots,\bm{n}}{\textnormal{diag}} v\bigg(\frac{\bm{i}}{\bm{n}}\bigg).\]
\end{definition}
\begin{definition} \label{Definition Toeplitz matrix}
Given a $d$-index $\bm{n} \in \mathbb{N}^d$, a matrix of the form $[a_{\bm{i}-\bm{j}}]_{\bm{i},\bm{j} = \bm{1}}^{\bm{n}} \in \mathbb{C}^{N(\bm{n})\times N(\bm{n})}$ is called a d-level Toeplitz matrix. Unilevel Toeplitz matrices ($d = 1$) are also defined as matrices that are constant along all of their diagonals.
\end{definition}
\noindent A characterization of Toeplitz matrix-sequences is given by the following Theorem, the proof of which can be found in \cite[Sections 3.1, 3.5]{GLT_theory_VOL_2}. Before that, let us define a useful matrix. Given an arbitrary $n \in \mathbb{N}$ and $k \in \mathbb{Z}$, define the $n\times n$ matrix $J_n^{(k)}$ such that $[J_n^{(k)}]_{ij} = 1$ if $i-j = k$ and $[J_n^{(k)}]_{ij}=0$ otherwise. Given two $d$-indices $\bm{n} \in \mathbb{N}^d$ and $\bm{k} \in \mathbb{Z}^d$, we define $ J_{\bm{n}}^{(\bm{k})} = J_{n_1}^{(k_1)} \otimes J_{n_2}^{(k_2)}\otimes \cdots \otimes J_{n_d}^{(k_d)},$ where $\otimes$ denotes the Kronecker product between two matrices.
\begin{theorem} \label{theorem Toeplitz matrices}
Let a function $f:[-\pi,\pi]^d \mapsto \mathbb{C}$ belonging to $L^1([-\pi,\pi]^d)$ be given, with Fourier coefficients denoted by:
$$f_{\bm{k}} = \frac{1}{(2\pi)^d} \int_{[-\pi,\pi]^d} f(\bm{\theta}) e^{- i \langle\bm{k}, \bm{\theta}\rangle}\ \mathrm{d}\bm{\theta},\ \ \bm{k} \in \mathbb{Z}^d,$$
\noindent where $\langle \bm{k}, \bm{\theta} \rangle = \sum_{i = 1}^d k_i \theta_i$. 
The $\bm{n}$-th ($d$-level) Toeplitz matrix associated with $f$ is defined as:
$$T_{\bm{n}}(f) = [f_{\bm{i}-\bm{j}}]_{\bm{i},\bm{j} = \bm{1}}^{\bm{n}} = \sum_{\bm{k} = -(\bm{n}-\bm{1})}^{\bm{n}-\bm{1}} f_{\bm{k}} J_{\bm{n}}^{(\bm{k})}.$$
\noindent Every $d$-level matrix sequence of the form $\{T_{\bm{n}}(f)\}_{n}$, with $\{\bm{n} = \bm{n}(n)\}_n \subseteq \mathbb{N}^d$ such that $\bm{n} \rightarrow \infty$ as $n \rightarrow \infty$, is called a (d-level) Toeplitz sequence generated by $f$, which in turn is referred to as the generating function of $\{T_{\bm{n}}(f)\}_{n}$. Furthermore, $\{T_{\bm{n}}(f)\}_{n} \sim_{\sigma} f.$ If moreover $f$ is real, then $\{T_{\bm{n}}(f)\}_n \sim_{\lambda} f.$
\end{theorem}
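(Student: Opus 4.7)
The statement is the classical Szegő–Tilli theorem for multilevel Toeplitz sequences, so the plan is to verify the algebraic representation, establish the singular value distribution for trigonometric polynomials by a direct moment computation, pass to $L^1$ symbols via the approximating class of sequences (a.c.s.) machinery, and finally upgrade to eigenvalues in the Hermitian case.

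First I would check the identity $T_{\bm{n}}(f) = \sum_{\bm{k} = -(\bm{n}-\bm{1})}^{\bm{n}-\bm{1}} f_{\bm{k}} J_{\bm{n}}^{(\bm{k})}$: by the Kronecker-product definition of $J_{\bm{n}}^{(\bm{k})}$, the $(\bm{i},\bm{j})$ entry is $\prod_\ell [J_{n_\ell}^{(k_\ell)}]_{i_\ell j_\ell}$, which equals $1$ exactly when $\bm{i}-\bm{j} = \bm{k}$, so the sum reproduces the entry $f_{\bm{i}-\bm{j}}$ demanded by Definition \ref{Definition Toeplitz matrix}.

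The core step is the polynomial case. For a trigonometric polynomial $p$ with finitely many nonzero Fourier coefficients, $T_{\bm{n}}(p)$ is multilevel banded with bandwidth independent of $\bm{n}$. The key algebraic lemma is $T_{\bm{n}}(p)T_{\bm{n}}(q) = T_{\bm{n}}(pq) + R_{\bm{n}}$ with $\mathrm{rank}(R_{\bm{n}}) = o(N(\bm{n}))$, the rank being controlled by the $d$ boundary surfaces of the multilevel grid. Iterating this and using $\mathrm{tr}(T_{\bm{n}}(g)) = N(\bm{n})\,g_{\bm{0}} = N(\bm{n})(2\pi)^{-d}\int g$, one obtains for every fixed $s \in \mathbb{N}$
$$\frac{1}{N(\bm{n})}\mathrm{tr}\bigl((T_{\bm{n}}(p)^* T_{\bm{n}}(p))^s\bigr) \longrightarrow \frac{1}{(2\pi)^d}\int_{[-\pi,\pi]^d}|p(\bm{\theta})|^{2s}\,\mathrm{d}\bm{\theta}.$$
Weierstrass approximation, applied after truncating to a bounded region via the uniform bound on $\|T_{\bm{n}}(p)\|$, lifts this from power moments to arbitrary $F \in C_c(\mathbb{R})$, giving $\{T_{\bm{n}}(p)\}_n \sim_\sigma p$. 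If $p$ is additionally real, $T_{\bm{n}}(p)$ is Hermitian and the analogous computation for $\mathrm{tr}(T_{\bm{n}}(p)^s)$ yields $\{T_{\bm{n}}(p)\}_n \sim_\lambda p$.

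To pass from polynomials to a general $f \in L^1([-\pi,\pi]^d)$, I would take trigonometric polynomials $p_m \to f$ in $L^1$ (for instance, Cesàro means of the Fourier series of $f$) and use the Schatten trace-norm bound $\|T_{\bm{n}}(f - p_m)\|_1 \le N(\bm{n})(2\pi)^{-d}\|f - p_m\|_{L^1}$ to conclude that $\{T_{\bm{n}}(p_m)\}_n$ is an a.c.s.\ for $\{T_{\bm{n}}(f)\}_n$; the general principle that a.c.s.\ approximation preserves singular value distributions (and eigenvalue distributions in the Hermitian setting) then upgrades $\{T_{\bm{n}}(p_m)\}_n \sim_\sigma p_m$ to $\{T_{\bm{n}}(f)\}_n \sim_\sigma f$, and likewise to $\sim_\lambda f$ when $f$ is real. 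I expect the main obstacle to be the multilevel rank estimate $\mathrm{rank}(R_{\bm{n}}) = o(N(\bm{n}))$: one must recursively track the boundary contributions across the $d$ levels of the Kronecker-product structure and check that the resulting "surface" is asymptotically negligible relative to the "bulk" volume $N(\bm{n})$; once that is secured, the a.c.s.\ extension and the Hermitian refinement follow standard patterns of the GLT theory.
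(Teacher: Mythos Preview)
The paper does not supply its own proof of this theorem: immediately before the statement it says ``the proof of which can be found in \cite[Sections 3.1, 3.5]{GLT_theory_VOL_2}'', so the result is simply quoted from the GLT monograph of Garoni and Serra-Capizzano. Your outline---verify the $J_{\bm{n}}^{(\bm{k})}$ representation, establish the distribution for trigonometric polynomials via low-rank commutator estimates and trace computations, then extend to $L^1$ symbols through the Schatten-$1$ bound $\|T_{\bm{n}}(g)\|_1 \le N(\bm{n})(2\pi)^{-d}\|g\|_{L^1}$ and the a.c.s.\ machinery, with the Hermitian upgrade for real $f$---is precisely the strategy carried out in that reference, so there is nothing to contrast.
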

\par A special type of Toeplitz matrices are the circulant matrices, as defined below.
\begin{definition}
A matrix of the form $\big[a_{(\bm{i}-\bm{j})\bmod{ \bm{n}}}\big]_{\bm{i},\bm{j} = \bm{1}}^{\bm{n}} \in \mathbb{C}^{N(\bm{n})\times N(\bm{n})}$, for some $d$-index $\bm{n} \in \mathbb{N}^d$, is called a multilevel ($d$-level) circulant matrix.
\end{definition}
\noindent Given an arbitrary $n \in \mathbb{N}$, define the $n \times n$ matrix $C_n$ such that $[C_n]_{ij}= 1$ if $(i-j) \bmod{n} = 1$ and $[C_n]_{ij}=0$ otherwise. Then, for $\bm{n} \in \mathbb{N}^d$ and $\bm{k} \in \mathbb{Z}^d$, let $C_{\bm{n}}^{\bm{k}} = C_{n_1}^{k_1} \otimes C_{n_2}^{k_2} \otimes \ldots \otimes C_{n_d}^{k_d}$, where $C_{n_i}^{k_i}$ is the previously defined matrix $C_{n}$ raised to the power $k_i$. Let $F_n$ denote the unitary discrete Fourier transform of order $n$. For any $\bm{n} \in \mathbb{N}^d$ let $F_{\bm{n}} = F_{n_1}\otimes \ldots \otimes F_{n_d}$. Below we provide a Theorem characterizing multilevel circulant matrices; its proof can be found in \cite[Section 3.4]{GLT_theory_VOL_2}.
\begin{theorem} \label{theorem Circulant matrices}
The $d$-level circulant matrix admits the following expression:
\[\big[a_{(\bm{i}-\bm{j})\bmod{ \bm{n}}}\big]_{\bm{i},\bm{j} = \bm{1}}^{\bm{n}} = \sum_{\bm{k} = \bm{0}}^{\bm{n}- \bm{1}} a_{\bm{k}} C_{\bm{n}}^{\bm{k}},\]
\noindent where $C_{\bm{n}}^{\bm{k}}$ is as defined earlier. Furthermore, letting any $\bm{r} \in \mathbb{N}^d$ and $c_{-\bm{r}},\ldots,c_{\bm{r}} \in \mathbb{C}$, we have that any linear combination of the form $\sum_{\bm{d} = -\bm{r}}^{\bm{r}}c_{\bm{k}}C_{\bm{n}}^{\bm{k}}$ is a $d$-level circulant matrix. Then,
\[\sum_{\bm{k} = -\bm{r}}^{\bm{r}} c_{\bm{k}}C_{\bm{n}}^{\bm{k}} = F^*_{\bm{n}}\left(\underset{\bm{j} = \bm{0},\ldots,\bm{n}-\bm{1}}{\textnormal{diag}}c\left(\frac{2\pi \bm{j}}{\bm{n}}\right) \right)F_{\bm{n}},\]
\noindent where $c(\bm{\theta}) = \sum_{\bm{k} = -\bm{r}}^{\bm{r}} c_{\bm{k}}e^{i\langle\bm{k},\bm{\theta}\rangle}$, and $F_{\bm{n}}$ is the multilevel discrete Fourier transform. Moreover, $\sum_{\bm{k} = -\bm{r}}^{\bm{r}} c_{\bm{k}}C_{\bm{n}}^{\bm{k}} $ is a normal matrix the spectrum of which is given by:
\[\lambda\bigg( \sum_{\bm{k} = -\bm{r}}^{\bm{r}} c_{\bm{k}}C_{\bm{n}}^{\bm{k}} \bigg) = \left\{c\left(\frac{2\pi \bm{j}}{\bm{n}}\right): \bm{j} = \bm{0},\ldots,\bm{n}-\bm{1} \right\}. \]
\end{theorem}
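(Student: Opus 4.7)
The plan is to establish the three claims in sequence, starting from the univariate case and leveraging the Kronecker-product structure of $C_{\bm{n}}^{\bm{k}}$ and $F_{\bm{n}}$.

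For the first identity, I would unpack the Kronecker definition of $C_{\bm{n}}^{\bm{k}}$. In the unilevel case, $[C_n^k]_{ij}=1$ iff $(i-j)\bmod n=k$, which is immediate from the definition of $C_n$ together with the fact that $C_n$ is the cyclic shift and hence $C_n^n=I_n$. By the standard identity $[A\otimes B]_{(i_1,i_2),(j_1,j_2)} = [A]_{i_1 j_1}[B]_{i_2 j_2}$, an induction on $d$ gives $[C_{\bm{n}}^{\bm{k}}]_{\bm{i},\bm{j}}=1$ precisely when $(\bm{i}-\bm{j})\bmod\bm{n}=\bm{k}$ componentwise, and is $0$ otherwise. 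Summing against $a_{\bm{k}}$ over $\bm{k}=\bm{0},\ldots,\bm{n}-\bm{1}$ recovers every entry of $[a_{(\bm{i}-\bm{j})\bmod\bm{n}}]_{\bm{i},\bm{j}=\bm{1}}^{\bm{n}}$ exactly once, since each residue class modulo $\bm{n}$ is hit by one index $\bm{k}$ in that range. This also shows that $\sum_{\bm{k}=-\bm{r}}^{\bm{r}}c_{\bm{k}}C_{\bm{n}}^{\bm{k}}$ lies in the same class, because $C_{\bm{n}}^{\bm{k}}$ depends only on $\bm{k}\bmod\bm{n}$.

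For the Fourier diagonalization, I would start from the well-known unilevel fact that $C_n=F_n^{*}\,\mathrm{diag}_{j=0,\ldots,n-1}(e^{i 2\pi j/n})\,F_n$; this is a direct verification using the definition of the DFT matrix $F_n$ and the shift action of $C_n$ on the Fourier basis $\{n^{-1/2}(e^{i 2\pi j l/n})_{l}\}_j$. Raising to the $k$-th power yields $C_n^k = F_n^{*}\,\mathrm{diag}_j(e^{i 2\pi j k/n})\,F_n$. The multilevel version then drops out from the mixed-product identity $(A_1\otimes\cdots\otimes A_d)(B_1\otimes\cdots\otimes B_d)=(A_1 B_1)\otimes\cdots\otimes(A_d B_d)$: one gets
\[
C_{\bm{n}}^{\bm{k}} \;=\; \bigotimes_{l=1}^d F_{n_l}^{*}\,\mathrm{diag}_{j_l}\!\bigl(e^{i 2\pi j_l k_l / n_l}\bigr)\,F_{n_l} \;=\; F_{\bm{n}}^{*}\Bigl(\bigotimes_{l=1}^d \mathrm{diag}_{j_l} e^{i 2\pi j_l k_l/n_l}\Bigr) F_{\bm{n}},
\]
and the Kronecker product of the diagonal matrices is itself diagonal with $\bm{j}$-th entry $\prod_l e^{i 2\pi j_l k_l/n_l}=e^{i\langle\bm{k},2\pi\bm{j}/\bm{n}\rangle}$. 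Summing over $\bm{k}\in\{-\bm{r},\ldots,\bm{r}\}$ with coefficients $c_{\bm{k}}$ and pulling $F_{\bm{n}}^{*}$, $F_{\bm{n}}$ outside, the diagonal assembles into $\mathrm{diag}_{\bm{j}}\bigl(c(2\pi\bm{j}/\bm{n})\bigr)$, which is exactly the stated formula.

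The final assertions on normality and spectrum then come for free: writing $M=F_{\bm{n}}^{*}D F_{\bm{n}}$ with $F_{\bm{n}}$ unitary (a Kronecker product of unitaries) and $D$ diagonal, $M$ is unitarily similar to a diagonal matrix, hence normal, and its eigenvalues are the diagonal entries of $D$, i.e.\ $\{c(2\pi\bm{j}/\bm{n}):\bm{j}=\bm{0},\ldots,\bm{n}-\bm{1}\}$. There is no substantive obstacle here; the only place where care is needed is the bookkeeping of the lexicographic ordering of $d$-indices so that the Kronecker structure of $F_{\bm{n}}$ and the diagonal match up with the indexing conventions of the ambient matrices. Once that ordering is fixed as in the definitions section, every step is an application of standard Kronecker-product identities.
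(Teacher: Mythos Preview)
Your proof is correct and follows the standard route via the unilevel DFT diagonalization of the cyclic shift together with the mixed-product property of Kronecker products. Note, however, that the paper does not supply its own proof of this theorem: it simply cites \cite[Section 3.4]{GLT_theory_VOL_2}. Your argument is precisely the kind of derivation one finds there, so there is nothing to compare beyond observing that you have filled in what the paper left to the reference.
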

\noindent Let $\mathcal{C}_{\bm{n}}$ be the set of all $d$-level circulant matrices of size $N(\bm{n}) \times N(\bm{n})$. In light of Theorem \ref{theorem Circulant matrices} we can see that the set $\mathcal{C}_{\bm{n}}$, together with  matrix addition and multiplication, is a commutative ring. For more about circulant matrices see \cite{Davis_Circulant}.
\par A very important notion of the theory of GLT sequences is that of the approximating class of sequences, which will be denoted as a.c.s.. In particular, it is very common in practice to approximate a ``difficult" matrix-sequence by an ``easier" sequence of matrix-sequences, which has the same asymptotic singular value or eigenvalue distribution. For example, such an ``easier" sequence can be used to construct effective preconditioners inside a suitable Krylov subspace method. For the rest of this manuscript, given a matrix $X$, we denote its spectral norm by $\|X\|$. 
\begin{definition} \label{definition a.c.s.}
Let $\{A_n\}_n$ be a matrix-sequence, with $A_n$ of size $d_n \times d_n$, and let $\{\{B_{n,m}\}_n\}_m$ be a sequence of matrix-sequences, with $B_{n,m}$ of size $d_n \times d_n$. We say that $\{\{B_{n,m}\}_n\}_m$ is an approximating class of sequences (a.c.s.) for $\{A_n\}_n$ if for every $m$, there exists $n_m$ such that, for all $n \geq n_m$, we can write:
$$A_n = B_{n,m} + R_{n,m} + N_{n,m},\ \ \textnormal{rank}(R_{n,m}) \leq c(m) d_n,\ \ \|N_{n,m}\| \leq \omega(m),$$
\noindent where $n_m,\ c(m)$, and $\omega(m)$ depend only on $m$, and are such that:
$$\lim_{m\rightarrow \infty} c(m) = \lim_{m \rightarrow \infty} \omega_m = 0.$$
\noindent In that case, we write $\{B_{n,m}\}_n \xrightarrow{\textnormal{a.c.s.}} \{A_n\}_n$.
\end{definition}

\noindent Below, we provide a result, as reported in \cite[Theorem 2.9]{GLT_theory_VOL_2}, which will be very useful when constructing suitable preconditioners later in this paper.
\begin{theorem} \label{theorem a.c.s. algebra}
Let two matrix-sequences $\{A_n\}_n,\ \{A'_n\}_n$ be given, with $A_n,\ A_n'$ of size $d_n \times d_n$, and suppose that $\{B_{n,m}\}_n \xrightarrow{\textnormal{a.c.s.}} \{A_n\}_n$ and $\{B'_{n,m}\}_n \xrightarrow{\textnormal{a.c.s.}} \{A'_n\}_n$. The following properties hold:
\begin{enumerate}
\item $\{B_{n,m}^*\}_n \xrightarrow{\textnormal{a.c.s.}} \{A_n^*\}_n$.
\item $\{c_1 B_{n,m} + c_2 B'_{n,m}\}_n  \xrightarrow{\textnormal{a.c.s.}} \{c_1 A_n + c_2 A'_n\}_n$, for all $c_1,\ c_2 \in \mathbb{C}$.
\item If $\{A_n\}_n$ and $\{A'_n\}_n$ are s.u., then $\{B_{n,m} B'_{n,m}\}_n  \xrightarrow{\textnormal{a.c.s.}} \{A_n A'_n\}_n$.
\item Suppose $\{A_n\}_n$ is s.v.. If $\{B_{n,m}\}_n \xrightarrow{\textnormal{a.c.s.}} \{A_n\}_n$ then $\{B^{\dagger}_{n,m}\}_n \xrightarrow{\textnormal{a.c.s.}} \{A^{\dagger}_n\}_n$.
\end{enumerate}
\end{theorem}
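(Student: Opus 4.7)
The plan is to prove each of the four statements by constructing an explicit a.c.s. decomposition of the derived matrix-sequence from the two given decompositions $A_n = B_{n,m} + R_{n,m} + N_{n,m}$ and $A_n' = B_{n,m}' + R_{n,m}' + N_{n,m}'$, valid for $n \geq \max(n_m, n_m')$, with rank parameters $c(m), c'(m) \to 0$ and norm parameters $\omega(m), \omega'(m) \to 0$. Items 1 and 2 are essentially bookkeeping. For item 1, taking conjugate transposes yields $A_n^* = B_{n,m}^* + R_{n,m}^* + N_{n,m}^*$, and since rank and spectral norm are invariant under conjugate transposition, the same parameters $c(m), \omega(m)$ certify the conclusion. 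For item 2, I take $c_1 A_n + c_2 A_n' = (c_1 B_{n,m} + c_2 B_{n,m}') + (c_1 R_{n,m} + c_2 R_{n,m}') + (c_1 N_{n,m} + c_2 N_{n,m}')$; rank subadditivity gives a rank bound $(c(m)+c'(m))d_n$ for the middle term, and the triangle inequality gives norm bound $|c_1|\omega(m)+|c_2|\omega'(m)$ for the last, both vanishing.

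For item 3, I would expand the product $A_n A_n'$ into nine terms. The six terms containing at least one factor $R_{n,m}$ or $R_{n,m}'$ have rank bounded by a constant multiple of $(c(m)+c'(m))d_n$ using $\mathrm{rank}(XY) \leq \min(\mathrm{rank}(X),\mathrm{rank}(Y))$, the pure-noise term $N_{n,m} N_{n,m}'$ has spectral norm at most $\omega(m)\omega'(m)$, and the potentially problematic terms are the cross terms $B_{n,m} N_{n,m}'$ and $N_{n,m} B_{n,m}'$, since $\|B_{n,m}\|$ may diverge. This is exactly where the s.u. hypothesis is needed. For a threshold $M > 0$, I would split $B_{n,m} = \tilde{B}_{n,m}^{(M)} + \hat{R}_{n,m}^{(M)}$ via the SVD, keeping singular values $\leq M$ in $\tilde{B}_{n,m}^{(M)}$ so that $\|\tilde{B}_{n,m}^{(M)}\| \leq M$. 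A Weyl-type inequality gives $\#\{\sigma_i(B_{n,m}) > M\} \leq \#\{\sigma_i(A_n) > M - \omega(m)\} + \mathrm{rank}(R_{n,m})$, and s.u. of $\{A_n\}_n$ then forces $\mathrm{rank}(\hat{R}_{n,m}^{(M)}) = o(d_n)$ as $n \to \infty$, once $M$ is taken large. Writing $B_{n,m} N_{n,m}' = \tilde{B}_{n,m}^{(M)} N_{n,m}' + \hat{R}_{n,m}^{(M)} N_{n,m}'$ places the first piece at norm $\leq M\omega'(m)$ and the second at low rank; choosing $M = M(m)$ growing slowly enough that $M(m)\omega'(m) \to 0$ closes the argument. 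Gathering all nine pieces into a single triple $(B_{n,m}B_{n,m}', \text{accumulated rank term}, \text{accumulated noise term})$ furnishes the required a.c.s. for $\{A_n A_n'\}_n$.

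For item 4, the plan is to exploit s.v. in a dual manner to control the behavior of the pseudoinverse on the small singular values. Starting from an SVD of $B_{n,m}$, I would split it according to whether its singular values lie below or above some threshold $1/M$, producing a decomposition $B_{n,m}^{\dagger} = \tilde{B}_{n,m}^{\dagger,(M)} + \hat{R}_{n,m}^{\dagger,(M)}$ with $\|\tilde{B}_{n,m}^{\dagger,(M)}\| \leq M$ and $\hat{R}_{n,m}^{\dagger,(M)}$ of small rank (controlled, via a Weyl-type argument as in item 3, by the s.v. property transferred from $\{A_n\}_n$ to $\{B_{n,m}\}_n$). Using the standard pseudoinverse perturbation identity, one reduces the difference $A_n^{\dagger} - B_{n,m}^{\dagger}$ to products of controlled pseudoinverses with $A_n - B_{n,m} = R_{n,m} + N_{n,m}$, at which point the arguments of item 3 apply.

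The main obstacle is item 3 (with item 4 inheriting the same difficulty plus non-linearity). The crux is neither expanding the product nor bookkeeping ranks and norms, but rather the transfer of the s.u./s.v. property from the target sequence $\{A_n\}_n$ to its approximant $\{B_{n,m}\}_n$ via Weyl-type singular value inequalities, so that the SVD truncation of $B_{n,m}$ (resp.\ of $B_{n,m}^{\dagger}$) produces an error of asymptotically negligible rank. Once this transfer is rigorously justified, the coupled limit in $m$ and $n$, taken along $M = M(m)$ growing sufficiently slowly, packages everything into a valid a.c.s.\ decomposition in the sense of Definition \ref{definition a.c.s.}.
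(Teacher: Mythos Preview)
The paper does not actually prove this theorem: it is stated as a known result ``as reported in \cite[Theorem 2.9]{GLT_theory_VOL_2}'' and used without proof. So there is no in-paper argument to compare against; your proposal is effectively a sketch of the proof one finds in the Garoni--Serra-Capizzano monographs.

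That said, your outline is essentially the standard one and is correct in spirit. Items 1 and 2 are indeed bookkeeping. For item 3 your identification of the crux --- transferring s.u.\ from $\{A_n\}_n$ to $\{B_{n,m}\}_n$ via Weyl/Mirsky-type singular value interlacing so that the SVD truncation of $B_{n,m}$ at level $M$ leaves a low-rank remainder --- is exactly the key step in the literature, and the diagonal choice $M=M(m)$ to reconcile the double limit is the right packaging. For item 4 your plan is also along the correct lines, but be aware that ``the standard pseudoinverse perturbation identity'' hides a genuine subtlety: $X\mapsto X^{\dagger}$ is discontinuous at rank jumps, so one cannot simply write $A_n^{\dagger}-B_{n,m}^{\dagger}$ as (bounded pseudoinverses)$\times(A_n-B_{n,m})$ without first isolating the small-singular-value part. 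The usual fix is precisely the one you hint at: truncate both $A_n$ and $B_{n,m}$ at level $1/M$, use s.v.\ (again transferred via Weyl) to make the truncated parts low-rank, and only then apply a Wedin-type identity on the well-conditioned remainders. With that caveat made explicit, your sketch matches the approach in \cite{GLT_theory_VOL_1,GLT_theory_VOL_2}.
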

\par All the previous definitions are used to define the notion of Locally Toeplitz (LT) sequences, which in turn are generalized to define the notion of GLT sequences. We briefly define this class of matrix-sequences here, and refer the reader to \cite{GLT_theory_VOL_1,GLT_theory_VOL_2} for a complete derivation of this class, and a vast amount of results concerning sequences belonging in the GLT class. This theory was originally developed in \cite{Tilli_LT}. 
\begin{definition} \label{def: LT operator}
\noindent Let $m,\ n \in \mathbb{N}$, $v:[0,1]\mapsto \mathbb{C}$, and $f \in L^1([-\pi,\pi])$. The 1-level locally Toeplitz operator is defined as the following $n \times n$ matrix:
\begin{equation*}
\begin{split}
LT_n^m(v,f) = &\ \bigg(D_m(v) \otimes T_{\lfloor n/m \rfloor}(f)\bigg) \oplus O_{n \bmod{m}},
\end{split}
\end{equation*}
\noindent where $D_m(v)$ is a diagonal sampling matrix generated by $v$, $T_{\lfloor n/m \rfloor}(f)$ a Toeplitz matrix generated by $f$, and $O_{n \bmod{m}}$ a zero matrix. Let also $\bm{m},\ \bm{n} \in \mathbb{N}^d$, $v:[0,1]^d\mapsto \mathbb{C}$, and $f \in L^1([-\pi,\pi]^d)$. The $d$-level locally Toeplitz operator is recursively defined as the following $N(\bm{n}) \times N(\bm{n})$ matrix:
\begin{equation*}
\begin{split}
LT_{\bm{n}}^{\bm{m}}(v,f_1 \otimes \ldots \otimes f_d) = LT_{n_1,\ldots,n_d}^{m_1,\ldots,m_d} \big( v(x_1,\ldots,x_d),f_1 \otimes \ldots \otimes f_d\big).
\end{split}
\end{equation*}
\end{definition}
\noindent Definition \ref{def: LT operator} allows us to recall the notion of a multilevel locally Toeplitz sequence.
\begin{definition}
Let $\{A_{\bm{n}}\}_n$ be a $d$-level matrix-sequence, let $v:[0,1]^d \mapsto \mathbb{C}$ be Riemann-integrable and let $f \in L^1([-\pi,\pi]^d)$. We say that $\{A_{\bm{n}}\}_n$ is a ($d$-level) locally Toeplitz sequence with symbol $v \otimes f$, and we write $\{A_{\bm{n}}\}_n \sim_{LT} v \otimes f$, if:
\[\{LT_{\bm{n}}^{\bm{m}}(v,f)\}_n \xrightarrow{\textnormal{a.c.s.}}\{A_{\bm{n}}\}_n,\ \textnormal{as }\bm{m} \rightarrow \infty.\]
\end{definition}
\par We are now able to define generalized locally Toeplitz sequences.
\begin{definition} 
Let a $d$-level matrix-sequence $\{A_{\bm{n}}\}_n$, and a measurable function $\kappa:[0,1]^d\times[-\pi,\pi]^d \mapsto \mathbb{C}$ be given. Suppose that $\forall\ \epsilon>0$ there exists a finite number of $d$-level LT sequences $\{A_{\bm{n}}^{(i,\epsilon)}\}_n\sim_{LT} v_{i,\epsilon} \otimes f_{i,\epsilon}$, $i = 1,\ldots,N_{\epsilon}$, such that as $\epsilon \rightarrow 0$:
 \[\sum_{i = 1}^{N_{\epsilon}} v_{i,\epsilon} \otimes f_{i,\epsilon} \rightarrow \kappa\textnormal{ in measure, and } \big\{ \sum_{i=1}^{N_{\epsilon}}A_{\bm{n}}^{(i,\epsilon)}\big\}_n \xrightarrow{\textnormal{a.c.s.}} \{A_{\bm{n}}\}_n.\]
\noindent Then $\{A_{\bm{n}}\}_n$ is a $d$-level GLT sequence with symbol $\kappa$, and we write $\{A_{\bm{n}}\}_n \sim_{GLT} \kappa$.
\end{definition}
\par The GLT class contains a wide range of matrix-sequences arising from various discretization methods of numerous differential equations. In the following Theorem we present some important properties of GLT sequences that will be used later in this paper. This is only a subset of the properties of multilevel GLT sequences, and the reader is referred to \cite{GLT_theory_VOL_1,GLT_theory_VOL_2} for a detailed derivation of all the results presented in this section. Given a measurable function $\kappa$, we denote its complex conjugate by $\bar{\kappa}$.
\begin{theorem} \label{theorem GLT properties}
Let $\{A_{\bm{n}}\}_n$ and $\{B_{\bm{n}}\}_n$ be two d-level matrix-sequences and $\kappa,\ \xi: [0,1]^d \times [-\pi,\pi]^d \mapsto \mathbb{C}$ two measurable functions. Assume that $\{A_{\bm{n}}\}_n$ is a GLT sequence with symbol $\kappa$, while $\{B_{\bm{n}}\}_n$ a GLT sequence with symbol $\xi$. Then:
\begin{enumerate}
\item  If $A_{\bm{n}}$ are Hermitian then $\{A_{\bm{n}}\}_n \sim_{\lambda} \kappa$.
\item $\{A^*_{\bm{n}}\}_n \sim_{GLT} \bar{\kappa}$.
\item $\{c_1 A_{\bm{n}} + c_2 B_{\bm{n}}\}_n \sim_{GLT} c_1 \kappa + c_2 \xi,\ \textnormal{for all } c_1,\ c_2 \in \mathbb{C}$.
\item $\{A_{\bm{n}}B_{\bm{n}}\}_n \sim_{GLT} \kappa \xi$.
\item If $\kappa \neq 0$ almost everywhere, then $\{A^{\dagger}_{\bm{n}}\}_n \sim_{GLT} \kappa^{-1}$.
\item Let a sequence of d-level matrix-sequences $\{B_{\bm{n},m}\}_n \sim_{GLT} \kappa_m$. Then, we have that $\{B_{\bm{n},m}\}_n \xrightarrow{\textnormal{a.c.s.}} \{A_{\bm{n}}\}_n$ if and only if $\kappa_m \rightarrow \kappa$ in measure.
\item If $\{A_{\bm{n}}\}_n \sim_{GLT} \kappa$ and each 	$A_{\bm{n}}$ is Hermitian, then $\{f(A_{\bm{n}})\}_n \sim_{GLT} f(\kappa)$ for every continuous function $f : \mathbb{C} \mapsto \mathbb{C}$.
\end{enumerate}
\end{theorem}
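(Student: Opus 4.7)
The plan is to prove each of the seven items by systematically reducing to the definitional ingredients of GLT sequences: approximation by sums of locally Toeplitz pieces, together with the a.c.s.\ algebra collected in Theorem \ref{theorem a.c.s. algebra} and the measure-theoretic Lemma \ref{lemma convergence in measure}. For each GLT sequence appearing in the statement, I would unfold the definition to obtain, for every $\epsilon > 0$, a finite family of LT sequences whose tensor symbols converge in measure to the nominal GLT symbol and whose sum is a.c.s.-close to the original sequence. The strategy is then to perform the relevant algebraic or analytic operation on these LT approximants and transfer the conclusion back through the a.c.s.\ machinery.

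Items 2--5 are essentially algebraic and fit directly into this scheme. For item 2, conjugate-transposing the LT approximants yields sequences with symbols $\bar v_{i,\epsilon}\otimes\bar f_{i,\epsilon}$; clause 1 of Theorem \ref{theorem a.c.s. algebra} transports the a.c.s.\ approximation to $\{A^{*}_{\bm{n}}\}_n$, while part 1 of Lemma \ref{lemma convergence in measure} upgrades $\sum v_{i,\epsilon}\otimes f_{i,\epsilon}\to\kappa$ in measure to the conjugated version converging to $\bar\kappa$. Item 3 follows from clause 2 of Theorem \ref{theorem a.c.s. algebra} and linearity of convergence in measure. For item 4, the product clause (clause 3) applies because GLT sequences are automatically sparsely unbounded, and multiplicativity of convergence in measure on a finite-measure domain (part 3 of Lemma \ref{lemma convergence in measure}) handles the symbols. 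Item 5 mirrors item 4 but uses clause 4; the hypothesis $\kappa\neq 0$ almost everywhere supplies the sparse-vanishing property required by the pseudoinverse clause, and the symbol-level statement follows from a continuous-mapping argument applied to $z\mapsto z^{-1}$ off the null set $\{\kappa = 0\}$.

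Item 1 requires a spectral, rather than merely singular-value, distribution result. The approach is to combine Theorem \ref{theorem Toeplitz matrices} (which provides the correct eigenvalue symbol for Toeplitz sequences with real symbol, and by extension for Hermitian LT sequences) with a standard Mirsky/Weyl-type perturbation argument, showing that a.c.s.\ convergence by Hermitian matrices preserves the asymptotic eigenvalue distribution. Item 6 is a characterization theorem: the forward direction rests on uniqueness of the GLT symbol modulo null sets, while the reverse direction is a diagonal-selection argument that combines the individual a.c.s.\ approximations of $\{B_{\bm{n},m}\}_n$ with the hypothesis $\kappa_m\to\kappa$ in measure to assemble an a.c.s.\ approximation of $\{A_{\bm{n}}\}_n$ by LT sums.

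The main obstacle will be item 7. The plan is to invoke Weierstrass to approximate $f$ uniformly by polynomials $p_m$ on a common real interval containing the spectra of the Hermitian matrices $A_{\bm{n}}$ and the essential range of $\kappa$; items 2--4 then yield $\{p_m(A_{\bm{n}})\}_n\sim_{GLT} p_m(\kappa)$ for each fixed $m$. The delicate step is the double limit: one must show $\{p_m(A_{\bm{n}})\}_n\xrightarrow{\textnormal{a.c.s.}}\{f(A_{\bm{n}})\}_n$ uniformly in $n$, which is where the Hermitian hypothesis is really used, via the spectral theorem and the bound $\|p_m(A_{\bm{n}})-f(A_{\bm{n}})\|\le \|p_m-f\|_{\infty}$ on the joint spectrum. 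Simultaneously, $p_m(\kappa)\to f(\kappa)$ in measure is immediate from uniform convergence on the essential range of $\kappa$. With both convergences in hand, item 6 assembles the conclusion $\{f(A_{\bm{n}})\}_n\sim_{GLT} f(\kappa)$.
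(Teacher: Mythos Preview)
The paper does not actually prove this theorem: it is stated as a collection of known properties and the reader is referred to \cite{GLT_theory_VOL_1,GLT_theory_VOL_2} for the derivations. So there is no in-paper proof to compare against; your proposal is being measured against the standard arguments in the cited GLT monographs. In broad outline your plan matches those arguments closely --- unfolding the GLT definition into LT approximants, pushing the operation through via the a.c.s.\ algebra of Theorem~\ref{theorem a.c.s. algebra}, and tracking symbols with Lemma~\ref{lemma convergence in measure} is exactly the template used there for items 2--6.

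There is, however, a genuine gap in your treatment of item 7. You propose to approximate $f$ uniformly by polynomials on ``a common real interval containing the spectra of the Hermitian matrices $A_{\bm{n}}$''. No such interval need exist: a GLT sequence is only sparsely unbounded, not uniformly bounded in spectral norm, so the eigenvalues of $A_{\bm{n}}$ may be unbounded as $n\to\infty$. The Weierstrass step therefore cannot be applied on a single compact set, and the bound $\|p_m(A_{\bm{n}})-f(A_{\bm{n}})\|\le\|p_m-f\|_\infty$ is unavailable in the form you state. The standard remedy is to exploit the s.u.\ property explicitly: for each threshold $M$, only $o(N(\bm{n}))$ eigenvalues lie outside $[-M,M]$, so one splits $A_{\bm{n}}$ into a bounded part plus a low-rank correction, applies Weierstrass on $[-M,M]$, controls the low-rank piece within the a.c.s.\ framework, and finishes with a diagonal argument in $M$ and the polynomial degree. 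Without this refinement your argument only covers the (admittedly common) special case where $\|A_{\bm{n}}\|$ is uniformly bounded.

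A minor remark: in item 2 you invoke part 1 of Lemma~\ref{lemma convergence in measure} for the conjugate, but that clause concerns $|f_m|\to|f|$; conjugation is handled instead by the elementary fact that $z\mapsto\bar z$ is continuous, hence preserves convergence in measure.
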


\section{A Structure Preserving Method} \label{Section A structure preserving method}

\par In this section, we will derive an optimization method suitable for solving convex quadratic optimization problems, with linear constraints arising from the discretization of some continuous process. The assumption on the constraints is that the generated (multilevel) matrix-sequence is a GLT sequence. Let us consider the following generic Differential Equation (DE):
\[\mathrm{D} \mathrm{y}(\bm{x},t) = \mathrm{g}(\bm{x},t),\]
\noindent where $\mathrm{D}$ denotes some linear differential operator associated with the DE, $\bm{x}$ is a $(d-1)$-dimensional spatial variable and $t \geq 0$ is the time variable. Since analytical solutions are not readily available for various differential operators, we discretize the previous equation given an arbitrary numerical method, and instead solve a sequence of linear systems of the form:
\begin{equation} \label{discrete arbitrary DE}
\{D_{\bm{n}} y_{\bm{n}}\}_n = \{g_{\bm{n}}\}_n,
\end{equation}
\noindent with size $d_n \times d_n$ and $d_n = N(\bm{n})$, such that $\bm{n} \rightarrow \infty$ as $n \rightarrow \infty$.  
\par Concerning the objective of the studied model, we assume that it may be summarized by a convex functional $\mathrm{J_1}(\mathrm{y}(\bm{x},t))$. Usually, such a functional measures the misfit between the state $\mathrm{y}(\bm{x},t)$ and some desired state $\mathrm{\bar{y}}(\bm{x},t)$, and we will focus our attention on this class of (inverse) problems. In other words, we expect that the discretized version of this functional will be of the form $\frac{1}{2}(y-\bar{y})^*J_1(y-\bar{y})$, with $J_1$ a symmetric positive (semi-)definite matrix. As is common in such problems, the linear systems in \eqref{discrete arbitrary DE} usually admit more than one solution and hence a regularization functional is usually employed to guarantee that the chosen solution will have some desired properties, depending on the initial DE under consideration. In other words, we introduce a control variable $\mathrm{u}(\bm{x},t)$ which is linked to the state variable as follows:
\[\mathrm{D} \mathrm{y}(\bm{x},t) + \mathrm{u}(\bm{x},t) = \mathrm{g}(\bm{x},t).\]
\noindent The size of the control is measured using some convex functional $\mathrm{J_2}(\mathrm{u}(\bm{x},t))$.
\par Finally, we allow further restrictions on the state and control variables in the form of inequality constraints (which depend on the problem under consideration). By combining all the previous, we obtain the following generic model that is studied in this paper:
\begin{equation} \label{generic inverse problem}
\begin{split}
\min_{\mathrm{y},\mathrm{u}} \       &\ \mathrm{J}(\mathrm{y}(\bm{x},t),\mathrm{u}(\bm{x},t)) =  \mathrm{J_1}(\mathrm{y}(\bm{x},t)) +  \mathrm{J_2}(\mathrm{u}(\bm{x},t)) \\
\text{s.t.}\ &\ \mathrm{D} \mathrm{y}(\bm{x},t) + \mathrm{u}(\bm{x},t) = \mathrm{g}(\bm{x},t),\\
        \    &\ \mathrm{y_{a}}(\bm{x},t) \leq \mathrm{y}(\bm{x},t)  \leq \mathrm{y_{b}}(\bm{x},t),\quad \mathrm{u_{a}}(\bm{x},t) \leq \mathrm{u}(\bm{x},t)  \leq \mathrm{u_{b}}(\bm{x},t).
\end{split}
\end{equation}
\noindent The problem is considered on a given compact space-time domain $\Omega \times (0,T)$, for some $T > 0$, where $\Omega \subset \mathbb{R}^{d-1}$ has boundary $\partial \Omega$. The algebraic inequality constraints are assumed to hold a.e. on $\Omega \times(0,T)$. We further note that the restrictions ${\rm y_a},\ {\rm y_b},\ {\rm u_a}$, and ${\rm u_b}$ may take the form of constants, or functions in spatial and/or temporal variables. The boundary conditions are not specified since they do not affect the analysis in this section. Notice that problem \eqref{generic inverse problem} includes the case of equality-constrained optimization, by allowing unbounded restriction functions.
\par We discretize problem \eqref{generic inverse problem}, using an arbitrary numerical method, to find an approximate solution by solving a sequence of optimization problems of the form:
\begin{equation} \label{discretized generic inverse problem}
\begin{split}
\min_{y_{\bm{n}},u_{\bm{n}}} \       &\ \bigg(\frac{1}{2}(y_{\bm{n}}-\bar{y}_{\bm{n}})^* J_{1_{\bm{n}}} (y_{\bm{n}}-\bar{y}_{\bm{n}}) + \frac{1}{2}u_{\bm{n}}^*J_{2_{\bm{n}}} u_{\bm{n}}\bigg) \\
\text{s.t.}\ &\ D_{\bm{n}} y_{\bm{n}} + u_{\bm{n}} = g_{\bm{n}},\\
        \    &\ y_{a_{\bm{n}}} \leq y_{\bm{n}}  \leq y_{b_{\bm{n}}},\quad u_{a_{\bm{n}}} \leq u_{\bm{n}}  \leq u_{b_{\bm{n}}},
\end{split}
\end{equation}
\noindent in which the associated matrices are of size $d_n \times d_n$, where $d_n = N(\bm{n})$ and $d_n \rightarrow \infty$ as $n \rightarrow \infty$. Notice that we only assume $J_{1_{\bm{n}}}$ and $J_{2_{\bm{n}}}$ to be symmetric positive semi-definite. Hence, the presented methodology is applicable to a wide range of convex quadratic programming problems. An entry $n_j$ of the multi-index $\bm{n}$ corresponds to the number of discretization points along dimension $j$, with $j \in \{1,\ldots,d\}$, where $n_d$ corresponds to  the time dimension. Below, we summarize our assumptions for the associated matrices in problem \eqref{discretized generic inverse problem}.
\begin{assumption} \label{Assumption GLT matrices}
Given the sequence of problems in \eqref{discretized generic inverse problem}, we assume that:
\begin{itemize}
\item The sequence $\{D_{\bm{n}}\}_n$ is a $d$-level matrix sequence with spectral norm uniformly bounded with respect to $n$, i.e. there exists a constant $C_D$ such that $\|D_{\bm{n}}\| \leq C_D$ for all $n$. Furthermore, there exists a measurable function $\kappa:[0,1]^d\times[-\pi,\pi]^d \mapsto \mathbb{C}$, which is the symbol of $\{D_{\bm{n}}\}_n$, so that $\{D_{\bm{n}}\}_n \sim_{GLT} \kappa$.
\item The sequences $\{J_{1_{\bm{n}}}\}_n$ and $\{J_{2_{\bm{n}}}\}_n$ are two $d$-level matrix sequences, with uniformly bounded spectral norms with respect to $n$. Furthermore, there exist two measurable functions $\xi_1,\ \xi_2:[0,1]^d \times [-\pi,\pi]^d\mapsto \mathbb{R}$, such that $\xi_1 \geq 0$, $\xi_2 \geq 0$, $\{J_{1_{\bm{n}}}\}_n \sim_{GLT} \xi_1$, and $\{J_{2_{\bm{n}}}\}_n \sim_{GLT} \xi_2$.
\end{itemize}
\end{assumption}
\noindent We note that a wide range of numerical discretizations of DEs satisfy this assumption (see \cite{GLT_theory_VOL_1,GLT_theory_VOL_2} for a plethora of applications). Notice also that the requirement that $\xi_1$ and $\xi_2$ are real and non-negative follows from the positive semi-definiteness of $J_{1_{\bm{n}}}$ and $J_{2_{\bm{n}}}$. Towards the end of this section we discuss how one could still apply the presented methodology successfully without requiring the GLT structure of the discretized objective function (i.e. by requiring only boundedness and convexity).
\par Before presenting the proposed optimization method for solving problems of the form \eqref{discretized generic inverse problem}, we note a negative result concerning a large class of optimization methods. More specifically, problems like \eqref{discretized generic inverse problem} are often solved using an Interior Point Method (IPM), or some Active-Set (AS) type of method. However, such problems are usually highly structured, and this structure must be exploited, given that the problem size increases indefinitely as one refines the discretization. Obviously, any AS method would fail in maintaining the structure, as only a subset of the constraints of \eqref{discretized generic inverse problem} is considered at each AS iteration and hence the structure of the AS sub-problems will be unknown. In fact, any optimization method whose sub-problems arise by projecting the variables of the problem in a subspace would face this issue. 
\par On the other hand, IPMs deal with the inequality constraints by introducing logarithmic barriers in the objective (see for example \cite{paper_23}). Then, at every IPM iteration, one forms the optimality conditions of the barrier sub-problem, and approximately solves them using Newton's method. If we assume that there exists a symbol $f$ which describes the asymptotic eigenvalue distribution of the sequence of Hessian matrices of the logarithmic barrier, we arrive at a contradiction. Indeed, the sequence of Hessian matrices arising from the logarithmic barriers introduced by IPM are not s.u.. This in turns contradicts the assumption that $f$ is the symbol of this matrix sequence, since if an arbitrary matrix sequence is such that $\{L_n\}_n \sim_{\sigma} f$, for some measurable function $f$, then $\{L_n\}_n$ must be s.u. (see \cite[Section 9--S1]{GLT_theory_VOL_1}). In particular, any GLT sequence is s.u., and hence the sequence of Hessian matrices of the logarithmic barrier functions cannot be a GLT sequence. As a consequence, the system matrix of the optimality conditions of each barrier sub-problem, within the IPM, will not be in the GLT class.
\subsection{Alternating Direction Method of Multipliers}
\par In order to overcome the previous issues, we propose the use of an alternating direction method of multipliers (see \cite[Section 5]{paper_26} and the references therein), which separates the equality from the inequality constraints, thus allowing us to preserve the structure found in the matrices associated with \eqref{discretized generic inverse problem}. We should mention here that while ADMM allows us to retain the underlying structure of the problem, it comes at a cost. It is well-known (see e.g. \cite{paper_26}) that ADMM leads to relatively slow convergence and hence is not suitable for finding very accurate solutions. Nevertheless, a 4-digit accurate solution can generally be found in reasonable CPU time. Furthermore, the linear system solved at each ADMM iteration does not change, and hence, if a suitable preconditioner exploiting the problem structure is found, it only needs to be computed once. Finally, linear convergence can also be shown, under certain assumptions on the problem under consideration (such as strong convexity, see \cite{paper_53}).
\par We begin by rewriting problem \eqref{discretized generic inverse problem}, after introducing some auxiliary variables $z_{y_{\bm{n}}},z_{u_{\bm{n}}}$ of size $N(\bm{n})$:
\begin{equation} \label{Generic ADMM Full problem}
\begin{split}
\min_{y_{\bm{n}},u_{\bm{n}},z_{y_{\bm{n}}},z_{u_{\bm{n}}}} & \ \bigg(\frac{1}{2}(y_{\bm{n}}-\bar{y}_{\bm{n}})^* J_{1_{\bm{n}}} (y_{\bm{n}}-\bar{y}_{\bm{n}}) + \frac{1}{2}u_{\bm{n}}^*J_{2_{\bm{n}}} u_{\bm{n}}\bigg) \\
 \text{s.t.}\ \  & \  D_{\bm{n}}y_{\bm{n}} +   u_{\bm{n}} = g_{\bm{n}}, \\ 
  &\ y_{\bm{n}} = z_{y_{\bm{n}}} , \  u_{\bm{n}} = z_{u_{\bm{n}}},\\
 &\  y_{a_{\bm{n}}} \leq z_{y_{\bm{n}}} \leq y_{b_{\bm{n}}},\ u_{a_{\bm{n}}} \leq z_{u_{\bm{n}}} \leq u_{b_{\bm{n}}}.
\end{split}
\end{equation}
\noindent Next, we define the augmented Lagrangian function corresponding to (\ref{Generic ADMM Full problem}):
\begin{equation} \label{Generic ADMM aug Lagrangian}
\begin{split}
\mathcal{L}_{\delta}(y_{\bm{n}},u_{\bm{n}},z_{y_{\bm{n}}},&z_{u_{\bm{n}}},p_{_{\bm{n}}},w_{y_{\bm{n}}},w_{u_{\bm{n}}}) =\ \frac{1}{2}(y_{\bm{n}}-\bar{y}_{\bm{n}})^* J_{1_{\bm{n}}} (y_{\bm{n}}-\bar{y}_{\bm{n}}) + \frac{1}{2} u_{\bm{n}}^* J_{2_{\bm{n}}} u_{\bm{n}} \\
 &+\ p_{\bm{n}}^*(D_{\bm{n}}y_{\bm{n}} + u_{\bm{n}}- g_{\bm{n}}) +  w_{y_{\bm{n}}}^*(y_{\bm{n}}-z_{y_{\bm{n}}}) +  w_{u_{\bm{n}}}^*(u_{\bm{n}} - z_{u_{\bm{n}}}) \\
 &+\ \frac{1}{2\delta}\left(\|D_{\bm{n}}y_{\bm{n}}+u_{\bm{n}}-g_{\bm{n}}\|_2^2 + \|y_{\bm{n}}-z_{y_{\bm{n}}}\|_2^2 + \|u_{\bm{n}}-z_{u_{\bm{n}}}\|_2^2 \right), 
\end{split}
\end{equation}
\noindent where $p_{\bm{n}},\ w_{y_{\bm{n}}}$, and $w_{u_{\bm{n}}}$ are the dual variables corresponding to each of the equality constraints of (\ref{Generic ADMM Full problem}). An ADMM applied to model (\ref{Generic ADMM Full problem}) is given in Algorithm \ref{ADMM algorithm}. We omit specific details of the algorithm. The reader is referred to \cite{paper_26} for a basic proof of convergence of Algorithm \ref{ADMM algorithm}, as well as a detailed overview of ADMM. For a convergence proof for the case where complex variables and matrices are allowed, the reader is referred to \cite{ADMMComplex}. Linear convergence of a generalization of this algorithm, under certain assumptions, can be found in \cite{paper_53} and the references therein. We should note that the step-length $\rho$ in \eqref{ADMM dual update 1} and \eqref{ADMM dual update 2 and 3} plays an important role in the convergence behavior of ADMM, and in fact, convergence of Algorithm \ref{ADMM algorithm} is guaranteed for any $\rho  \in (0,\frac{\sqrt{5}+1}{2})$ (see \cite{book_5}).
\begin{algorithm}[!ht]
\caption{(2-Block) Standard ADMM }
    \label{ADMM algorithm}
    \textbf{Input:}  Let $y_{\bm{n}}^0,u_{\bm{n}}^0,z_{y_{\bm{n}}}^0,z_{u_{\bm{n}}}^0,p_{\bm{n}}^0,w_{y_{\bm{n}}}^0,w_{u_{\bm{n}}}^0 \in \mathbb{C}^{N(\bm{n})}$, $\delta > 0$, $\rho  \in (0,\frac{\sqrt{5}+1}{2})$. 
\begin{algorithmic}
\For {($j = 0,1,\dotsc$)}
 \begin{subequations} 
\begin{align}
(y_{\bm{n}}^{j+1},u_{\bm{n}}^{j+1}) &=\  \underset{y_{\bm{n}},u_{\bm{n}}}{\arg\min}\big\{\mathcal{L}_{\delta}(y_{\bm{n}},u_{\bm{n}},z^{j}_{y_{\bm{n}}},z^{j}_{u_{\bm{n}}},p^j_{\bm{n}},w^j_{y_{\bm{n}}},w^j_{u_{\bm{n}}}) \big\}  \label{ADMM subproblem 1}\\
(z_{y_{\bm{n}}}^{j+1},z_{u_{\bm{n}}}^{j+1}) &=\  \underset{z_y \in [y_{a},y_{b}],\ z_u \in [u_{a},u_{b}]}{\arg\min} \big \{ \mathcal{L}_{\delta}(y_{\bm{n}}^{j+1},u_{\bm{n}}^{j+1},z_{y_{\bm{n}}},z_{u_{\bm{n}}},p_{\bm{n}}^j,w_{y_{\bm{n}}}^j,w_{u_{\bm{n}}}^j) \big\}   \label{ADMM subproblem 2}\\ 
p_{\bm{n}}^{j+1} &= \ p_{\bm{n}}^j + \frac{\rho}{\delta}(D_{\bm{n}}y_{\bm{n}}^{j+1}+ u_{\bm{n}}^{j+1} - g_{\bm{n}})\label{ADMM dual update 1}\\
(w_{y_{\bm{n}}}^{j+1},w_{u_{\bm{n}}}^{j+1}) &=\ \left(w_{y_{\bm{n}}}^j+ \frac{\rho}{\delta}(y_{\bm{n}}^{j+1}-z_{y_{\bm{n}}}^{j+1}), w_{u_{\bm{n}}}^j + \frac{\rho}{\delta}(u_{\bm{n}}^{j+1}-z_{u_{\bm{n}}}^{j+1})\right) \label{ADMM dual update 2 and 3}
\end{align}
\end{subequations}
\EndFor
\end{algorithmic}
\end{algorithm}
\par One can easily observe that the most challenging step of Algorithm \ref{ADMM algorithm}, is that of solving (\ref{ADMM subproblem 1}). The optimality conditions of (\ref{ADMM subproblem 1}), at iteration $j$, read as follows:
\begin{equation}\label{generic ADMM subproblem 1 optimality conditions}
\begin{split}
\begin{bmatrix}
J_{1_{\bm{n}}}  + \frac{1}{\delta}(D_{\bm{n}}^* D_{\bm{n}} +  I_{\bm{n}})  & \frac{1}{\delta}D_{\bm{n}}^* \\
 \frac{1}{\delta}D_{\bm{n}}  &  J_{2_{\bm{n}}}   + \frac{2}{\delta}I_{\bm{n}}
\end{bmatrix} \begin{bmatrix} y_{\bm{n}}\\ u_{\bm{n}} \end{bmatrix}  = \begin{bmatrix}  
\eta_1\\
\eta_2
 \end{bmatrix},
 \end{split}
\end{equation}
\noindent where 
\[ \eta_1 = J_{1_{\bm{n}}}\bar{y}_{\bm{n}} - D_{\bm{n}}^*p_{\bm{n}}^j - w_{y_{\bm{n}}}^j + \frac{1}{\delta} (D_{\bm{n}}^*g_{\bm{n}} +  z_{y_{\bm{n}}}^{j}),\ \  \eta_2 = - p_{\bm{n}}^j - w_{u_{\bm{n}}}^j +\frac{1}{\delta}(g_{\bm{n}} + z_{u_{\bm{n}}}^{j}).\]

\par Solving (\ref{generic ADMM subproblem 1 optimality conditions}) directly is not a good idea in our case, since its coefficient matrix is not expected to be cheap or convenient to work with. Instead, we can merge steps (\ref{ADMM subproblem 1}) and (\ref{ADMM dual update 1}) to obtain a more flexible saddle point system. More specifically, to take (\ref{ADMM dual update 1}) into account, we substitute  $p_{\bm{n}} = p_{\bm{n}}^{j} + \frac{\rho}{\delta} (D_{\bm{n}}y_{\bm{n}} + u_{\bm{n}} - g_{\bm{n}})$
\noindent into (\ref{generic ADMM subproblem 1 optimality conditions}), and the optimality conditions of (\ref{ADMM subproblem 1}) and (\ref{ADMM dual update 1}) can then be written as:
\begin{equation} \label{generic ADMM subproblems 1and3 optimality conditions}
\begin{split}
\begin{bmatrix}
\rho (J_{1_{\bm{n}}} + \frac{1}{\delta} I_{\bm{n}}) & 0 & D_{\bm{n}}^*\\
0 & \rho(J_{2_{\bm{n}}} + \frac{1}{\delta}I_{\bm{n}}) &  I_{\bm{n}}\\
D_{\bm{n}} &  I_{\bm{n}} & -\frac{\delta}{\rho} I_{\bm{n}}
\end{bmatrix} \begin{bmatrix}
y_{\bm{n}}\\
u_{\bm{n}}\\
p_{\bm{n}}
\end{bmatrix}  = \\ 
\qquad \begin{bmatrix}
\rho (J_{1_{\bm{n}}} \bar{y}_{\bm{n}} -  w_{y_{\bm{n}}}^j + \frac{1}{\delta}z_{y_{\bm{n}}}^{j}) + (1-\rho) D_{\bm{n}}^* p_{\bm{n}}^j\\
\rho(- w_{u_{\bm{n}}}^j + \frac{1}{\delta}z_{u_{\bm{n}}}^{j}) + (1-\rho)p_{\bm{n}}^j\\
 g_{\bm{n}} - \frac{\delta}{\rho} p_{\bm{n}}^j
\end{bmatrix}. 
\end{split}
\end{equation}
\noindent At this point, we have to decide how to solve (\ref{generic ADMM subproblems 1and3 optimality conditions}). For simplicity of exposition, we present here only one way of solving system (\ref{generic ADMM subproblems 1and3 optimality conditions}), by forming the normal equations and then employing the Preconditioned Conjugate Gradient method (PCG) to solve the resulting positive definite system, assuming that its $(2,2)$ block will be easily invertible. We note that the developments in this section hold for any Schur complement of the matrix in \eqref{generic ADMM subproblems 1and3 optimality conditions} (the choice of which Schur complement to use heavily depends on the problem under consideration). The case where neither the $(1,1)$ nor the $(2,2)$ block is easily invertible,  will be treated at the end of this section. Pivoting the second and then the third block equation of this system, yields: 
\begin{equation*}
\begin{split}
u_{\bm{n}} = &\left(\rho\left( J_{2_{\bm{n}}} + \frac{1}{\delta}I_{\bm{n}}\right)\right)^{-1}\left(- p_{\bm{n}} - \rho w_{u_{\bm{n}}}^j + \frac{\rho}{\delta}z_{u_{\bm{n}}}^{j} + (1-\rho)p_{\bm{n}}^j\right),\\
p_{\bm{n}} = &\left( \left( \rho J_{2_{\bm{n}}} + \frac{\rho}{\delta}I_{\bm{n}}\right)^{-1} + \frac{\delta}{\rho} I_{\bm{n}} \right)^{-1} \left(D_{\bm{n}}y_{\bm{n}}+r \right), \\
 r =  &- g_{\bm{n}} + \frac{\delta}{\rho} p_{\bm{n}}^j -   \left( \rho\left(J_{2_{\bm{n}}} + \frac{1}{\delta}I_{\bm{n}}\right)\right)^{-1}\left(\rho\left(- w_{u_{\bm{n}}}^j + \frac{1}{\delta}z_{u_{\bm{n}}}^{j}\right) + (1-\rho)p_{\bm{n}}^j\right), 
 \end{split}
\end{equation*}
\noindent and the resulting normal equations read as follows:
\begin{equation} \label{generic ADMM normal equations}
\begin{split}
S_{\bm{n}} y_{\bm{n}} \coloneqq  \left(\rho \left(J_{1_{\bm{n}}} + \frac{1}{\delta} I_{\bm{n}}\right) + D_{\bm{n}}^* \left(\left(\rho J_{2_{\bm{n}}} + \frac{\rho}{\delta}I_{\bm{n}}\right)^{-1} + \frac{\delta}{\rho} I_{\bm{n}}  \right)^{-1} D_{\bm{n}} \right)y_{\bm{n}} = \\ \rho \left(J_{1_{\bm{n}}} \bar{y}_{\bm{n}} -  w_{y_{\bm{n}}}^j + \frac{1}{\delta}z_{y_{\bm{n}}}^{j}\right) + (1-\rho) D_{\bm{n}}^* p_{\bm{n}}^j -  D_{\bm{n}}^*\left( \left(\rho J_{2_{\bm{n}}} + \frac{\rho}{\delta}I_{\bm{n}}\right)^{-1} + \frac{\delta }{\rho}I_{\bm{n}} \right)^{-1} r.
\end{split}
\end{equation}
\noindent Finally, we should mention that problem (\ref{ADMM subproblem 2}) of Algorithm \ref{ADMM algorithm} is trivial, as it admits a closed form solution. More specifically, we perform the optimization by ignoring the box constraints and then projecting the solution onto the box.
\par In what follows, using Assumption \ref{Assumption GLT matrices}, we present some results concerning the asymptotic behavior of the matrix sequence $\{S_{\bm{n}}\}_n$, by making use of the Theorems presented in the previous section. The latter is produced by refining an arbitrary discretization applied to \eqref{generic inverse problem} (assuming it satisfies Assumption \ref{Assumption GLT matrices}), employing ADMM to the discretized problem, and forming a certain Schur complement of the joint optimality conditions of \eqref{ADMM subproblem 1} and \eqref{ADMM dual update 1}. The solution of \eqref{generic ADMM normal equations} delivers the solution to \eqref{generic ADMM subproblems 1and3 optimality conditions}, and the remaining ADMM sub-problems can be trivially solved in $O(N(\bm{n}))$ operations. Following practical applications, we assume $\delta$ and $\rho$ to be $\Theta(1)$ and constant along the iterations of ADMM (usually $\delta \in [0.01,100]$ and $\rho \in [1,1.618]$) .

\begin{theorem} \label{theorem GLT preserving method}
Given Assumption \textnormal{\ref{Assumption GLT matrices}}, and the sequence $\{S_{\bm{n}}\}_n$, with $S_{\bm{n}}$ given in \eqref{generic ADMM normal equations}, we have that there exists a measurable function $\tau : [0,1]^d\times[-\pi, \pi]^d \mapsto \mathbb{R}$ such that $\tau \geq 0$, $\tau \neq 0$ a.e., and $\{S_{\bm{n}}\}_n \sim_{GLT} \tau$. Moreover, $S_{\bm{n}}$ are Hermitian positive definite, $\{S_{\bm{n}}\}_n \sim_{\lambda} \tau$, and $\{S^{-1}_{\bm{n}}\}_n \sim_{\lambda} \tau^{-1}$.
\end{theorem}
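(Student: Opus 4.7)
The strategy is to build the symbol $\tau$ by repeatedly applying the GLT algebra of Theorem \ref{theorem GLT properties} to the explicit expression of $S_{\bm{n}}$ in \eqref{generic ADMM normal equations}, then deduce Hermitian positive definiteness directly from the form of $S_{\bm{n}}$, and finally combine these two facts with Theorem \ref{theorem GLT properties}(1) and (5) to obtain the claimed eigenvalue distributions.

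First I would check Hermitian positive definiteness by inspection. Since $J_{1_{\bm{n}}},J_{2_{\bm{n}}}$ are Hermitian positive semi-definite and $\delta,\rho>0$, the matrix $\rho(J_{1_{\bm{n}}}+\tfrac{1}{\delta}I_{\bm{n}})$ is Hermitian positive definite (with minimum eigenvalue at least $\rho/\delta$). The matrix $\rho J_{2_{\bm{n}}}+\tfrac{\rho}{\delta}I_{\bm{n}}$ is likewise Hermitian positive definite, hence so is its inverse, and adding $\tfrac{\delta}{\rho}I_{\bm{n}}$ keeps the sum Hermitian positive definite, so its inverse $M_{\bm{n}}:=\bigl((\rho J_{2_{\bm{n}}}+\tfrac{\rho}{\delta}I_{\bm{n}})^{-1}+\tfrac{\delta}{\rho}I_{\bm{n}}\bigr)^{-1}$ exists and is Hermitian positive definite. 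Then $D_{\bm{n}}^*M_{\bm{n}}D_{\bm{n}}$ is Hermitian positive semi-definite, and adding the strictly positive definite first summand yields $S_{\bm{n}}$ Hermitian positive definite.

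Next I would build the symbol layer by layer using Theorem \ref{theorem GLT properties}. Since $\{I_{\bm{n}}\}_n\sim_{GLT}1$, combining parts (3) and (4) with Assumption \ref{Assumption GLT matrices} gives
\begin{equation*}
\{\rho(J_{1_{\bm{n}}}+\tfrac{1}{\delta}I_{\bm{n}})\}_n\sim_{GLT}\rho(\xi_1+\tfrac{1}{\delta}),\qquad
\{\rho J_{2_{\bm{n}}}+\tfrac{\rho}{\delta}I_{\bm{n}}\}_n\sim_{GLT}\rho\xi_2+\tfrac{\rho}{\delta}.
\end{equation*}
Because $\xi_2\ge 0$, the latter symbol satisfies $\rho\xi_2+\rho/\delta\ge\rho/\delta>0$ a.e., so part (5) applies and inversion produces $\{(\rho J_{2_{\bm{n}}}+\tfrac{\rho}{\delta}I_{\bm{n}})^{-1}\}_n\sim_{GLT}(\rho\xi_2+\rho/\delta)^{-1}$. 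Adding $\tfrac{\delta}{\rho}I_{\bm{n}}$ (part (3)) gives a symbol bounded below by $\delta/\rho>0$, so another application of (5) yields $\{M_{\bm{n}}\}_n\sim_{GLT}\mu$, where $\mu=\bigl((\rho\xi_2+\rho/\delta)^{-1}+\delta/\rho\bigr)^{-1}$. Since $\{D_{\bm{n}}^*\}_n\sim_{GLT}\bar{\kappa}$ by part (2), two applications of part (4) give $\{D_{\bm{n}}^*M_{\bm{n}}D_{\bm{n}}\}_n\sim_{GLT}|\kappa|^2\mu$. Finally, part (3) yields $\{S_{\bm{n}}\}_n\sim_{GLT}\tau$ with
\begin{equation*}
\tau(\bm{x},\bm{\theta})=\rho\bigl(\xi_1+\tfrac{1}{\delta}\bigr)+|\kappa|^2\Bigl((\rho\xi_2+\tfrac{\rho}{\delta})^{-1}+\tfrac{\delta}{\rho}\Bigr)^{-1}.
\end{equation*}
From $\xi_1\ge 0$ it follows that $\tau\ge\rho/\delta>0$ a.e., giving $\tau\ge 0$ and $\tau\ne 0$ a.e.

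With Hermiticity and the GLT symbol in hand, Theorem \ref{theorem GLT properties}(1) gives $\{S_{\bm{n}}\}_n\sim_{\lambda}\tau$. For the inverse, since $\tau\ne 0$ a.e., part (5) gives $\{S_{\bm{n}}^{-1}\}_n\sim_{GLT}\tau^{-1}$; as $S_{\bm{n}}^{-1}$ is Hermitian (being the inverse of a Hermitian positive definite matrix), part (1) applied again yields $\{S_{\bm{n}}^{-1}\}_n\sim_{\lambda}\tau^{-1}$. The only mild technical point throughout is verifying nonvanishing of the intermediate symbols prior to each invocation of part (5), which is handled uniformly by the strictly positive additive shifts $\rho/\delta$ and $\delta/\rho$ inherited from the ADMM augmentation parameter; I do not foresee a substantive obstacle beyond careful bookkeeping of the six GLT algebra operations above.
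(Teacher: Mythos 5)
Your proof is correct and follows essentially the same route as the paper: building $\tau$ via the GLT algebra of Theorem \ref{theorem GLT properties} applied to the explicit Schur complement, with the positive shifts $\rho/\delta$ and $\delta/\rho$ guaranteeing the symbols are nonvanishing before each inversion. You are in fact slightly more explicit than the paper in verifying Hermitian positive definiteness and the final eigenvalue distributions, which the paper's proof leaves implicit after establishing $\tau>0$.
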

\begin{proof}
Let Assumption \ref{Assumption GLT matrices} hold. Then, we have that there exist three measurable functions $\kappa,\ \xi_1,\ \xi_2:[0,1]^d \times [-\pi,\pi]^d\mapsto \mathbb{C}$, such that $\xi_1 \geq 0$, $\xi_2 \geq 0$, $\{J_{1_{\bm{n}}}\}_n \sim_{GLT} \xi_1$, $\{J_{2_{\bm{n}}}\}_n \sim_{GLT} \xi_2$, and $\{D_{\bm{n}}\}_n \sim_{GLT} \kappa$. Furthermore, we can notice that, for any constant $C > 0$, $\{C I_{\bm{n}}\}_n \sim_{GLT} C$, where $C$ can be considered as a positive constant function (e.g. as a constant on the domain $[-\pi,\pi]^d$, generating a diagonal Toeplitz matrix). This, combined with Theorem \ref{theorem GLT properties} (conditions (2.)--(4.)), yields that:
\begin{equation*}
\begin{split}
\{M_{1_{\bm{n}}}\}_n \coloneqq \bigg\{\rho\left(J_{1_{\bm{n}}} + \frac{1}{\delta} I_{\bm{n}}\right) \bigg\}_n \sim_{GLT} &\ \rho(\xi_1 + \delta^{-1}),\\
\{M_{2_{\bm{n}}}\}_n \coloneqq\bigg\{ \left(\rho\left(J_{2_{\bm{n}}} + \frac{1}{\delta}I_{\bm{n}}\right)\right)^{-1} + \frac{\delta}{\rho} I_{\bm{n}} \bigg\}_n \sim_{GLT} &\ \big(\rho(\xi_2 + \delta^{-1})\big)^{-1} + \frac{\delta}{\rho}.
\end{split}
\end{equation*}
\noindent Similarly, from Theorem \ref{theorem GLT properties} (conditions (2.)--(5.)), we have that:
\begin{equation*}
\bigg\{M_{1_{\bm{n}}} + D_{\bm{n}}^* M_{2_{\bm{n}}}^{-1} D_{\bm{n}} \bigg\}_n \sim_{GLT} \rho(\xi_1 + \delta^{-1}) + |\kappa|^2  \bigg(\big(\rho(\xi_2 + \delta^{-1})\big)^{-1} + \frac{\delta}{\rho} \bigg)^{-1},
\end{equation*}
\noindent where we used that $\bar{\kappa}\kappa = |\kappa|^2$. Setting $\tau = \rho(\xi_1 + \delta^{-1}) + |\kappa|^2  \bigg(\big(\rho(\xi_2 + \delta^{-1})\big)^{-1} + \frac{\delta}{\rho} \bigg)^{-1}$ and noticing that $\tau > 0$ completes the proof.
\end{proof}
\par Subsequently we present some possible approaches that could allow one to take advantage of the structure preserving property of ADMM. In particular, three possible ways of exploiting the preserved structure are discussed here. However, other approaches could be possible. For this analysis, we will make use of the following proposition:
\begin{proposition} \label{Prop. ACS matrices}
Let Assumption \textnormal{\ref{Assumption GLT matrices}} hold. Then, there exist sequences of $d$-level matrix-sequences $\{\{\tilde{D}_{\bm{n},m}\}_n\}_m,$ $\{\{\tilde{J}_{1_{\bm{n},m}}\}_n\}_m,$ $\{\{\tilde{J}_{2_{\bm{n},m}}\}_n\}_m$, with uniformly bounded spectral norms with respect to $n$ and $m$, and sequences of measurable functions $\{\kappa_m\}_m,$ $\{\xi_{1_m}\}_m$, and $\{\xi_{2_m}\}_m$ such that $\kappa_m,\ \xi_{1_m},\ \xi_{2_m}: [0,1]^d\times[-\pi,\pi]^d \mapsto \mathbb{C}$, $\xi_{1_m},\ \xi_{2_m}$ are real a.e., non-negative, and:
\begin{itemize}
\item $\{\tilde{D}_{\bm{n},m}\}_n \xrightarrow{\textnormal{a.c.s.}} \{D_{\bm{n}}\}_n$, $\{\tilde{D}_{\bm{n},m}\}_n \sim_{GLT} \kappa_m$, with $\kappa_m \rightarrow \kappa$ in measure,
\item $\{\tilde{J}_{1_{\bm{n},m}}\}_n \xrightarrow{\textnormal{a.c.s.}} \{J_{1_{\bm{n}}}\}_n$, $\{\tilde{J}_{1_{\bm{n},m}}\}_n \sim_{GLT} \xi_{1_m}$, with $\xi_{1_m} \rightarrow \xi_1$ in measure,
\item $\{\tilde{J}_{2_{\bm{n},m}}\}_n\xrightarrow{\textnormal{a.c.s.}} \{J_{2_{\bm{n}}}\}_n$,  $\{\tilde{J}_{2_{\bm{n},m}}\}_n \sim_{GLT} \xi_{2_m}$, with $\xi_{2_m} \rightarrow \xi_2$ in measure.
\end{itemize}
\end{proposition}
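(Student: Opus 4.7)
\par My plan leverages condition~(6) of Theorem~\ref{theorem GLT properties}, which recasts the a.c.s.\ requirement as in-measure convergence of GLT symbols: given two GLT sequences, the a.c.s.\ relation holds if and only if their symbols converge in measure. It therefore suffices to construct, for each of the three sequences, an approximating family of GLT sequences whose symbols converge in measure to the respective target. Such families are already furnished by the definition of a GLT sequence itself: instantiating that definition with $\epsilon = 1/m$ for $\{D_{\bm{n}}\}_n$ yields finitely many LT sequences $\{A_{\bm{n}}^{(i,m)}\}_n \sim_{LT} v_{i,m}\otimes f_{i,m}$ summing to an a.c.s.\ for $\{D_{\bm{n}}\}_n$, with $\kappa_m := \sum_{i=1}^{N_m} v_{i,m}\otimes f_{i,m} \to \kappa$ in measure. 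Setting $\tilde{D}_{\bm{n},m} := \sum_{i=1}^{N_m} A_{\bm{n}}^{(i,m)}$, each LT sequence is GLT and finite sums of GLT sequences remain GLT (parts (2)--(3) of Theorem~\ref{theorem GLT properties}), so $\{\tilde{D}_{\bm{n},m}\}_n \sim_{GLT} \kappa_m$ as required.

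\par Applying the same procedure to $\{J_{k_{\bm{n}}}\}_n$ for $k=1,2$ gives initial approximations $\{\hat{J}_{k_{\bm{n},m}}\}_n$ with symbols $\hat{\xi}_{k_m} \to \xi_k$ in measure, but $\hat{\xi}_{k_m}$ need not be real or non-negative. I would impose these two properties in two purification steps. First, replace $\hat{J}_{k_{\bm{n},m}}$ by its Hermitian part $\tfrac{1}{2}(\hat{J}_{k_{\bm{n},m}} + \hat{J}_{k_{\bm{n},m}}^*)$, which is GLT with symbol $\Re\hat{\xi}_{k_m}$ by parts (2)--(3) of Theorem~\ref{theorem GLT properties}; since $\xi_k$ is real, $\Re\hat{\xi}_{k_m} \to \xi_k$ in measure by Lemma~\ref{lemma convergence in measure}(2). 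Second, since this sequence is now Hermitian, apply Theorem~\ref{theorem GLT properties}(7) with the continuous function $f(z) = \max(\Re z, 0)$ to obtain a Hermitian PSD GLT sequence $\{\tilde{J}_{k_{\bm{n},m}}\}_n$ with symbol $\xi_{k_m} := \max(\Re\hat{\xi}_{k_m}, 0) \geq 0$. The $1$-Lipschitz property of $\max(\cdot, 0)$, together with Lemma~\ref{lemma convergence in measure}(1) and $\xi_k \geq 0$, gives $\xi_{k_m} \to \xi_k$ in measure, so Theorem~\ref{theorem GLT properties}(6) yields $\{\tilde{J}_{k_{\bm{n},m}}\}_n \xrightarrow{\textnormal{a.c.s.}} \{J_{k_{\bm{n}}}\}_n$.

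\par Uniform boundedness of the spectral norms in $n$ and $m$ is handled by truncation: the LT building blocks $LT_{\bm{n}}^{\bm{m}}(v,f)$ can be chosen from bounded $v$ and $f$ (approximating the symbols by bounded simple functions first), and both Hermitianization and the positive-part map are non-expansive in spectral norm, so the constructed sequences inherit a uniform bound. Any residual norm blow-up from a rank-$o(N(\bm{n}))$ set of singular values can be cut off, since such perturbations are invisible to both the a.c.s.\ relation and the in-measure symbol limit. I expect the main obstacle to lie precisely in this combined bookkeeping --- simultaneously guaranteeing GLT structure, in-measure symbol convergence, reality and non-negativity of $\xi_{1_m}$ and $\xi_{2_m}$, and uniform spectral norm bounds across both indices --- while ensuring that the functional-calculus step in the second purification (with the non-smooth map $\max(\cdot,0)$) remains faithful to the GLT framework via Theorem~\ref{theorem GLT properties}(7), and that Hermitianization does not collide with the a.c.s.\ algebra of Theorem~\ref{theorem a.c.s. algebra}.
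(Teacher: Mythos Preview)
Your constructive approach is sound, but it is worth noting that the paper does not actually prove this proposition: it simply cites \cite[Theorem~8.6]{GLT_theory_VOL_1} (unilevel) and \cite[Theorem~5.6]{GLT_theory_VOL_2} (multilevel), which are precisely the structural characterisations of GLT sequences asserting that any GLT sequence admits an a.c.s.\ by GLT sequences whose symbols converge in measure. Your argument --- instantiating the defining $\epsilon$-approximation with $\epsilon=1/m$, recognising finite sums of LT sequences as GLT via Theorem~\ref{theorem GLT properties}(3), and invoking Theorem~\ref{theorem GLT properties}(6) to pass between a.c.s.\ and in-measure symbol convergence --- is exactly how those cited theorems are established, so you are reconstructing the literature result rather than taking a different route.

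Where your proposal goes beyond the bare citation is in the two purification steps for $\{J_{k_{\bm n}}\}_n$: Hermitianising to force real symbols and then applying $f(z)=\max(\Re z,0)$ via Theorem~\ref{theorem GLT properties}(7) to force non-negativity. These are correct and make explicit something the paper's one-line proof leaves to the reader (the cited theorems do not, on their face, guarantee that the approximating symbols inherit reality or non-negativity from the limit). The $1$-Lipschitz argument for in-measure convergence of the positive part is clean. The one place your sketch is genuinely incomplete is the uniform spectral-norm bound in both $n$ and $m$: the LT building blocks $LT_{\bm n}^{\bm m}(v,f)$ from the raw definition need not have norms controlled uniformly in $m$, and your ``truncation plus low-rank cut-off'' remedy, while standard in spirit, requires care to execute without disturbing the GLT symbol. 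You correctly flag this as the main obstacle; the paper sidesteps it entirely by appeal to the reference.
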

\begin{proof}
The proof can be found in \cite[Theorem 8.6]{GLT_theory_VOL_1} for the unilevel case and \cite[Theorem 5.6]{GLT_theory_VOL_2} for the multilevel case.
\end{proof}
\noindent For the rest of this section we will assume that we have such sequences of $d$-level GLT sequences available, satisfying the conditions stated in Proposition \ref{Prop. ACS matrices}. We further assume that these approximate sequences are comprised of matrices that are easy to compute and invert (whenever possible).
\subsection{Schur complement approximations}
\par In what follows we present various Schur complement approximations that could potentially serve as preconditioners inside PCG, for solving systems of the form of \eqref{generic ADMM normal equations} (or any other Schur complement of system \eqref{generic ADMM subproblems 1and3 optimality conditions}). The viability of each of the following approaches depends on the structure of the problem, as well as the choice of the discretization. We note that the different approaches are presented for completeness, as well as an indicator of the generality of the proposed methodology. In particular, as the convergence behavior of ADMM does not depend on the choice of preconditioner (assuming that the PCG converges to a desired accuracy), we will only make use of one of the following Schur complement approximations when presenting computational results.

\subsubsection{A Schur complement block approximation}
\par Given three sequences of GLT sequences  $\{\{\tilde{D}_{\bm{n},m}\}_n\}_m$, $\{\{\tilde{J}_{1_{\bm{n},m}}\}_n\}_m,$ $\{\{\tilde{J}_{2_{\bm{n},m}}\}_n\}_m$, satisfying the conditions of Proposition \ref{Prop. ACS matrices}, we define the following approximation for the matrix in \eqref{generic ADMM normal equations}:
\begin{equation} \label{first schur complement preconditioner}
\tilde{S}_{\bm{n},m} = \rho\left(\tilde{J}_{1_{\bm{n},m}} + \frac{1}{\delta} I_{\bm{n}}\right) + \tilde{D}_{\bm{n},m}^* \left(\left(\rho \tilde{J}_{2_{\bm{n},m}} + \frac{\rho}{\delta}I_{\bm{n}}\right)^{-1} + \frac{\delta}{\rho} I_{\bm{n}}  \right)^{-1} \tilde{D}_{\bm{n},m}.
\end{equation}

\begin{theorem} \label{theorem first schur complement preconditioner}
Let Assumption \textnormal{\ref{Assumption GLT matrices}} hold, and assume that we have available the sequences $\{\{\tilde{D}_{\bm{n},m}\}_n\}_m,$ $\{\{\tilde{J}_{1_{\bm{n},m}}\}_n\}_m,$ and $\{\{\tilde{J}_{2_{\bm{n},m}}\}_n\}_m$, satisfying the conditions of Proposition \textnormal{\ref{Prop. ACS matrices}}. By defining $\tilde{S}_{\bm{n},m}$ as in \eqref{first schur complement preconditioner}, we have:
\begin{itemize}
\item $\{\tilde{S}_{\bm{n},m}\}_n \xrightarrow{\textnormal{a.c.s.}} \{S_{\bm{n}}\}_n$, $\{\tilde{S}_{\bm{n},m}\}_n \sim_{GLT} \tau_m$, and $\tau_m \rightarrow \tau$ in measure, where $\tau$ is given in Theorem \ref{theorem GLT preserving method} and:
\begin{equation} \label{symbol of first preconditioner} 
\tau_m = \rho(\xi_{1_m} + \delta^{-1}) + |\kappa_m|^2  \bigg(\big(\rho(\xi_{2_m} + \delta^{-1})\big)^{-1} + \frac{\delta}{\rho} \bigg)^{-1}.
\end{equation}
\item The sequence $\{\tilde{S}^{-1}_{\bm{n},m} S_{\bm{n}}\}_n$ is weakly clustered at $1$.
\item For any $n,\ m$, the eigenvalues of $\tilde{S}^{-1}_{\bm{n},m}S_{\bm{n}}$ lie in the interval $\bigg[ \frac{1}{C_s},C_s\bigg]$, where $C_s$ is a positive constant uniformly bounded with respect to $n$ and $m$.
\end{itemize}
\end{theorem}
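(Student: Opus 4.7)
The plan is to deduce all three bullets from a single consistent strategy: the closure properties of the GLT class (Theorem \ref{theorem GLT properties}) together with the a.c.s.\ algebra (Theorem \ref{theorem a.c.s. algebra}) and the convergence-in-measure rules of Lemma \ref{lemma convergence in measure} handle the first two bullets in parallel, while the third bullet reduces to a direct Rayleigh-quotient estimate using the uniform spectral bounds supplied by Assumption \ref{Assumption GLT matrices} and Proposition \ref{Prop. ACS matrices}.

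For the first bullet, I would unfold the nested expression \eqref{first schur complement preconditioner} once and apply items (2)--(5) of Theorem \ref{theorem GLT properties} block by block, inheriting the symbols $\kappa_m,\xi_{1_m},\xi_{2_m}$ from Proposition \ref{Prop. ACS matrices} and the trivial symbol $C$ for constant multiples of $I_{\bm{n}}$. The only sanity check is that every sequence to be inverted has a symbol bounded below by a positive constant, which is built into the problem by the $\delta^{-1}$ shifts (so that $\rho(\xi_{2_m}+\delta^{-1}) \ge \rho/\delta$ and $(\rho(\xi_{2_m}+\delta^{-1}))^{-1}+\delta/\rho \ge \delta/\rho$); item (5) is therefore legal. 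This yields $\{\tilde S_{\bm{n},m}\}_n \sim_{GLT} \tau_m$ with $\tau_m$ as in \eqref{symbol of first preconditioner}. Performing the same walk at the level of symbols, with Lemma \ref{lemma convergence in measure} (items 1--3 all applicable because $[0,1]^d \times [-\pi,\pi]^d$ has finite Lebesgue measure) and the continuity of $x \mapsto 1/x$ away from $0$, gives $\tau_m \to \tau$ in measure; item (6) of Theorem \ref{theorem GLT properties} then converts this into $\{\tilde S_{\bm{n},m}\}_n \xrightarrow{\textnormal{a.c.s.}} \{S_{\bm{n}}\}_n$.

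For the second bullet, both $\tilde S_{\bm{n},m}$ and $S_{\bm{n}}$ are Hermitian and their GLT symbols $\tau_m,\tau$ are non-zero a.e., so items (5) and (4) of Theorem \ref{theorem GLT properties} deliver $\{\tilde S_{\bm{n},m}^{-1} S_{\bm{n}}\}_n \sim_{GLT} \tau_m^{-1}\tau$. Since this matrix is similar to the Hermitian $\tilde S_{\bm{n},m}^{-1/2} S_{\bm{n}} \tilde S_{\bm{n},m}^{-1/2}$, itself GLT with the same symbol via item (7) applied to $x\mapsto x^{-1/2}$ followed by the product rule, item (1) supplies the asymptotic eigenvalue distribution $\tau_m^{-1}\tau$. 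The convergence $\tau_m^{-1}\tau \to 1$ in measure as $m \to \infty$ (which follows from the first bullet combined with Lemma \ref{lemma convergence in measure}) then yields the weak cluster at $1$ in the joint $(n,m)$-limit in the sense of Definition \ref{definition: clustering}.

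The third bullet I would treat bare-handed. Each of the two matrices is a sum of $(\rho/\delta) I_{\bm{n}}$ with positive semidefinite terms, giving $S_{\bm{n}}, \tilde S_{\bm{n},m} \succeq (\rho/\delta) I_{\bm{n}}$; the uniform upper bounds on $\|D_{\bm{n}}\|,\|\tilde D_{\bm{n},m}\|,\|J_{i_{\bm{n}}}\|,\|\tilde J_{i_{\bm{n},m}}\|$ from Assumption \ref{Assumption GLT matrices} and Proposition \ref{Prop. ACS matrices}, combined with $\|M_{2_{\bm{n}}}^{-1}\|,\|\tilde M_{2_{\bm{n},m}}^{-1}\| \le \rho/\delta$ (since $M_{2_{\bm{n}}},\tilde M_{2_{\bm{n},m}} \succeq (\delta/\rho) I_{\bm{n}}$), give uniform upper bounds on $\|S_{\bm{n}}\|$ and $\|\tilde S_{\bm{n},m}\|$. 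Plugging the four bounds into $\lambda_{\min}(S_{\bm{n}})/\lambda_{\max}(\tilde S_{\bm{n},m}) \le \lambda_j(\tilde S_{\bm{n},m}^{-1} S_{\bm{n}}) \le \lambda_{\max}(S_{\bm{n}})/\lambda_{\min}(\tilde S_{\bm{n},m})$ produces a single constant $C_s$ independent of $n$ and $m$. The main obstacle I foresee is the PSD step $\tilde J_{i_{\bm{n},m}} \succeq 0$: Proposition \ref{Prop. ACS matrices} only asserts non-negativity of the symbols $\xi_{i_m}$, not positive semidefiniteness of the matrices. I would address this either by insisting in the construction of the a.c.s.\ that $\tilde J_{i_{\bm{n},m}}$ be Hermitian PSD, which is cost-free for the approximations used later in the paper (non-negative generated circulants, diagonal sampling matrices with non-negative samples), or by absorbing a vanishing indefinite perturbation into the $(\rho/\delta) I_{\bm{n}}$ shift at the cost of a mildly inflated but still uniform $C_s$.
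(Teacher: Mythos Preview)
Your argument is correct and reaches the same conclusions, but the route for the second bullet (and to a lesser extent the first) differs from the paper's. For the a.c.s.\ convergence in the first bullet, the paper appeals directly to the a.c.s.\ algebra of Theorem \ref{theorem a.c.s. algebra} (items (1)--(4)) to get $\{\tilde S_{\bm{n},m}\}_n \xrightarrow{\textnormal{a.c.s.}} \{S_{\bm{n}}\}_n$, and separately derives $\tau_m$ and $\tau_m\to\tau$ via the GLT rules; you instead compute $\tau_m$ first and then invoke item (6) of Theorem \ref{theorem GLT properties} to extract the a.c.s.\ convergence from $\tau_m\to\tau$. For the weak cluster at $1$, the paper does not pass through symbols at all: it unpacks the a.c.s.\ relation as $S_{\bm{n}}=\tilde S_{\bm{n},m}+R_{\bm{n},m}+N_{\bm{n},m}$, observes that $\|\tilde S_{\bm{n},m}^{-1}\|$ is uniformly bounded (because $\tilde S_{\bm{n},m}\succeq (\rho/\delta)I_{\bm{n}}$), and multiplies through to obtain $\tilde S_{\bm{n},m}^{-1}S_{\bm{n}}=I_{\bm{n}}+\tilde R_{\bm{n},m}+\tilde N_{\bm{n},m}$ with the rank and norm bounds preserved; the weak cluster then falls out of Definition \ref{definition: clustering} directly. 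This is shorter and avoids the detour through the Hermitian similar matrix and item (7). Your symbol-based route has the virtue of making the limiting eigenvalue distribution $\tau_m^{-1}\tau$ explicit for each fixed $m$, but it costs the extra similarity argument. The third bullet is handled identically in both. Your observation that Proposition \ref{Prop. ACS matrices} does not literally guarantee $\tilde J_{i_{\bm{n},m}}\succeq 0$ is well taken; the paper tacitly assumes this when asserting that ``the exact same bounds hold also for $\tilde S_{\bm{n},m}$'', so your proposed fixes (choosing PSD approximants, or absorbing a vanishing indefinite part into the shift) are the right way to make the step airtight.
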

\begin{proof}
\noindent For the first condition, by Proposition \ref{Prop. ACS matrices} as well as Theorem \ref{theorem a.c.s. algebra} (conditions (1.)--(4.)), we get that $\{\tilde{S}_{\bm{n},m}\}_n \xrightarrow{\textnormal{a.c.s.}} \{S_{\bm{n}}\}_n$. Using Proposition \ref{Prop. ACS matrices} again, this time combined with Theorem \ref{theorem GLT properties} (conditions (2.)--(5.)) and Lemma \ref{lemma convergence in measure}, yields that $\tau_m$ is given by \eqref{symbol of first preconditioner}, and $\tau_m \rightarrow \tau$ in measure.
\par For the second condition, we firstly note that the sequence under consideration is Hermitian and positive definite by Assumption \ref{Assumption GLT matrices}. Then, using that $\{\tilde{S}_{\bm{n},m}\}_n \xrightarrow{\textnormal{a.c.s.}} \{S_{\bm{n}}\}_n$ implies that, for every $m$, there exists $n_m$ such that for all $n \geq n_m$:
\begin{equation} \label{eq:theorem schur compl prec. temp 1}
 S_{\bm{n}} = \tilde{S}_{\bm{n},m} + R_{\bm{n},m} + N_{\bm{n},m},\quad \textnormal{rank}(R_{\bm{n},m}) \leq c(m) d_n,\quad \|N_{\bm{n},m}\| \leq \omega(m),\end{equation}
\noindent where $n_m$, $c(m)$ and $\omega(m)$ depend only on $m$ and are such that:
\[\lim_{m \rightarrow \infty} c(m) = \lim_{m\rightarrow \infty}\omega(m) = 0.\]
\noindent By assumption, it is easy to see that any $\tilde{S}_{\bm{n},m}$ has a spectral norm uniformly bounded in $n$ and $m$. Furthermore, since $\delta =\Theta(1)$ and $\rho = \Theta(1)$, we observe that $\|\tilde{S}^{-1}_{\bm{n},m}\|$ is also uniformly bounded in $n$. Hence, by multiplying both sides of \eqref{eq:theorem schur compl prec. temp 1} by $\tilde{S}^{-1}_{\bm{n},m}$:
\[ \tilde{S}^{-1}_{\bm{n},m} S_{\bm{n}} = I_{\bm{n}} + \tilde{R}_{\bm{n},m} + \tilde{N}_{\bm{n},m},\]
\noindent where $\tilde{R}_{\bm{n},m} = \tilde{S}^{-1}_{\bm{n},m}R_{\bm{n},m}$ (thus $\textnormal{rank}(\tilde{R}_{\bm{n},m}) \leq \textnormal{rank}(R_{\bm{n},m}) \leq c(m) d_n$) and $\tilde{N}_{\bm{n},m} = \tilde{S}^{-1}_{\bm{n},m}N_{\bm{n},m}$ (and hence $\|\tilde{N}_{\bm{n},m}\| \leq C \omega(m)$, for some constant $C > 0$, independent of $m$ and $n$). This, along with the definition of a weak cluster in Definition \ref{definition: clustering}, proves the second condition.
\par For the third condition, let us take some constant $C_{\dagger}$ of $O(1)$, such that:  
\[\max\big\{\|D_{\bm{n}}\|,\|\tilde{D}_{\bm{n},m}\|,\|J_{1_{\bm{n}}}\|,\|J_{2_{\bm{n}}}\|,\|\tilde{J}_{1_{\bm{n},m}}\|,\|\tilde{J}_{2_{\bm{n},m}}\|\big\} \leq C_{\dagger},\ \forall\ n,\ m.\]
\noindent We know that such a constant exists by Assumption \ref{Assumption GLT matrices}. Then, we have that $\lambda_{\min}(S_{\bm{n}}) \geq \frac{\rho}{\delta}$ and $\lambda_{\max}(S_{\bm{n}}) \leq \rho C_{\dagger} + \frac{\rho}{\delta} + \frac{\rho}{\delta}C_{\dagger}^2$, for any $n$. The exact same bounds hold also for $\tilde{S}_{\bm{n},m}$, for every $n$ and $m$. Using these bounds, we can easily show that:
\[ \lambda_{\min}(\tilde{S}^{-1}_{\bm{n},m} S_{\bm{n}}) \geq \frac{1}{C_{\dagger}^2 + \delta C_{\dagger} + 1},\ \ \lambda_{\max}(\tilde{S}^{-1}_{\bm{n},m}S_{\bm{n}}) \leq C_{\dagger}^2 + \delta C_{\dagger} + 1, \]
\noindent for all $n,\ m$. Upon noticing that $\delta = \Theta(1)$ and $\rho = \Theta(1)$, there exists a constant $C_s = C_{\dagger}^2 + \delta C_{\dagger} + 1$ uniformly bounded with respect to $n$, satisfying the third condition of the Theorem.
\end{proof}
\begin{remark}
Notice that in order to obtain a strong clustering at 1, we would have to employ some extra assumptions. In particular, we would have to require that the sequences given in Assumption \textnormal{\ref{Assumption GLT matrices}} are strongly clustered in the essential range of their symbols, which in turn are required to be different from zero a.e.. Furthermore, we would have to assume that the condition in \eqref{eq:theorem schur compl prec. temp 1} is such that $ c(m)d_n = O(1)$.
\end{remark}
\begin{remark} While Assumption \textnormal{\ref{Assumption GLT matrices}} holds for a wide range of problems, and one is able to find easily computable sequences satisfying the conditions in Proposition \textnormal{\ref{Prop. ACS matrices}}, it is not often the case that the preconditioner in \eqref{first schur complement preconditioner} is easy to compute or invert. If this is the case, then Theorem \textnormal{\ref{theorem first schur complement preconditioner}} guarantees that such a preconditioner will provide a weak cluster of the eigenvalues of the preconditioned matrix at $1$. We note that while this is not optimal, it is expected to be good enough. This is because of the penalty parameter introduced by ADMM (i.e. $\delta = \Theta(1)$), which (along with the assumption that the involved matrices are uniformly bounded in $n$) guarantees that the normal equations matrix will be relatively well-conditioned, and hence PCG will converge in a number of iterations independent of the grid size (however, possibly depending on the conditioning of the problem matrix as well as the problem parameters).
\par The use of the preconditioner in \eqref{first schur complement preconditioner} becomes more obvious in the following example. If the $d$-level approximating matrix sequences  $\{\{\tilde{D}_{\bm{n},m}\}_n\}_m,$ $\{\{\tilde{J}_{1_{\bm{n},m}}\}_n\}_m,$ and $\{\{\tilde{J}_{2_{\bm{n},m}}\}_n\}_m$ satisfy the conditions of Proposition \textnormal{\ref{Prop. ACS matrices}}, and belong to the set of $d$-level circulant matrices of size $N(\bm{n})\times N(\bm{n})$, i.e. $\mathcal{C}_{\bm{n}}$, then the preconditioner in \eqref{first schur complement preconditioner} will be cheap to form  and store, using the fast Fourier transform (requiring $O\big(N(\bm{n})\log(N(\bm{n}))\big)$ operations and $O\big(N(\bm{n})\big)$ memory). This is because $\mathcal{C}_{\bm{n}}$ is a commutative ring under standard matrix addition and multiplication (see Theorem \textnormal{\ref{theorem Circulant matrices}}).
\end{remark}
\subsubsection{A matching Schur complement approximation}
\par As mentioned earlier, many approximating sequences based on the GLT theory would not allow for an easy computation or storage of the preconditioner in \eqref{first schur complement preconditioner}. While the numerical results of this paper will not focus on this case, we present an alternative to the preconditioner in \eqref{first schur complement preconditioner}, which could allow one to use various GLT approximations for the blocks of the matrix in \eqref{generic ADMM subproblems 1and3 optimality conditions}, and form an easily computable Schur complement approximation for a matrix of the form of \eqref{generic ADMM normal equations}.
\par In what follows, we define a Schur complement approximation based on the matching strategy, which was proposed in \cite{paper_47} and has been applied in a wide range of applications (e.g. \cite{paper_2,paper_48,paper_65}). While this approach can be very general, it is particularly effective under some additional assumptions imposed on problem \eqref{discretized generic inverse problem}. More specifically, we study the properties of this approximation using the GLT theory, and give certain assumptions under which such an approach would be optimal.
\par Given three sequences  $\{\{\tilde{D}_{\bm{n},m}\}_n\}_m$, $\{\{\tilde{J}_{1_{\bm{n},m}}\}_n\}_m$, $\{\{\tilde{J}_{2_{\bm{n},m}}\}_n\}_m$, satisfying the conditions of Proposition \ref{Prop. ACS matrices}, we define the following matrix: 
\begin{equation*}
\hat{D}_{\bm{n},m} = \tilde{D}_{\bm{n},m}^* +  \rho^{\frac{1}{2}}\left(\tilde{J}_{1_{\bm{n},m}} + \frac{1}{\delta}I_{\bm{n}} \right)^{\frac{1}{2}}\left( \left(\rho\tilde{J}_{2_{\bm{n},m}} + \frac{\rho}{\delta}I_{\bm{n}}\right)^{-1} + \frac{\delta}{\rho} I_{\bm{n}}  \right)^{\frac{1}{2}},
\end{equation*}
\noindent using which we can define an approximation for the matrix in \eqref{generic ADMM normal equations} as
\begin{equation} \label{second schur complement preconditioner}
\hat{S}_{\bm{n},m} = \hat{D}_{\bm{n},m}\left( \left(\rho\tilde{J}_{2_{\bm{n},m}} + \frac{\rho}{\delta}I_{\bm{n}}\right)^{-1} + \frac{\delta}{\rho} I_{\bm{n}}  \right)^{-1}\hat{D}^*_{\bm{n},m}.
\end{equation}
\par For simplicity of exposition let us define the following matrices:
\[M_{1_{\bm{n}}} = \rho \left(J_{1_{\bm{n}}} + \frac{1}{\delta}I_{\bm{n}}\right),\ \ \tilde{M}_{1_{\bm{n}}} = \rho \left(\tilde{J}_{1_{\bm{n},m}} + \frac{1}{\delta}I_{\bm{n}}\right),\]
\[ M_{2_{\bm{n}}} = \left(\rho\left(J_{2_{\bm{n}}} + \frac{1}{\delta}I_{\bm{n}}\right)\right)^{-1} + \frac{\delta}{\rho} I_{\bm{n}},\ \ \tilde{M}_{2_{\bm{n}}} = \left(\rho\left(\tilde{J}_{2_{\bm{n},m}} + \frac{1}{\delta}I_{\bm{n}}\right)\right)^{-1} + \frac{\delta}{\rho} I_{\bm{n}} . \]
\noindent Under Assumption \ref{Assumption GLT matrices}, we have that $\{M_{1_{\bm{n}}}\}_n \sim_{GLT} \rho(\xi_1 + \delta^{-1})$, $\{M_{2_{\bm{n}}}\}_n  \sim_{GLT} (\rho(\xi_2 + \delta^{-1}))^{-1} + \frac{\delta}{\rho})$, and $\tilde{M}_{1_{\bm{n}}} \sim_{GLT} \rho(\xi_{1_m} + \delta^{-1})$ with $\xi_{1_m} \rightarrow \xi_1$ in measure, while $\tilde{M}_{2_{\bm{n}}} \sim_{GLT} (\rho(\xi_{2_m} + \delta^{-1}))^{-1} + \frac{\delta}{\rho})$, where $\xi_{2_m} \rightarrow \xi_2$ in measure. Further, notice that all four previous matrix-sequences are comprised of Hermitian and positive definite matrices, each of which admits a square root. 
\begin{lemma} \label{lemma square root of spd matrices}
Let $\bm{n} \in \mathbb{N}^d$ be a $d$-index and $\{A_{\bm{n}}\}_n$ be a multilevel matrix-sequence with  $A_{\bm{n}}$ being Hermitian positive definite of size $N(\bm{n})\times N(\bm{n})$ and $\{A_{\bm{n}}\}_n \sim_{GLT} \chi$, where $\chi$ is a measurable function $\chi:[0,1]^d \times[-\pi,\pi]^d \mapsto \mathbb{R}$ such that $\chi \geq 0$ and $\chi \neq 0$ a.e.. Then, the matrices $A_{\bm{n}}$ ($A^{-1}_{\bm{n}}$, respectively) admit a square root $A_{\bm{n}}^{\frac{1}{2}}$ ($A^{-\frac{1}{2}}_{\bm{n}}$, respectively), such that $\{A^{\frac{1}{2}}_{\bm{n}}\}_n \sim_{GLT} \chi^{\frac{1}{2}}$ ($\{A^{-\frac{1}{2}}_{\bm{n}}\}_n \sim_{GLT} \chi^{-\frac{1}{2}}$, respectively).
\end{lemma}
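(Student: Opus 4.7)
The plan is to derive both claims as applications of property (7) of Theorem \ref{theorem GLT properties}, combined with the Hermitian functional calculus, using a continuous extension of the scalar square-root to the complex plane.

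For the square root $A_{\bm{n}}^{\frac{1}{2}}$, I would introduce a continuous function $f:\mathbb{C}\mapsto\mathbb{C}$, for example $f(z)=\sqrt{\max(\mathrm{Re}(z),0)}$, whose restriction to $[0,\infty)$ coincides with $x\mapsto x^{\frac{1}{2}}$. Since each $A_{\bm{n}}$ is Hermitian positive definite, its spectrum is contained in $(0,\infty)$, so the Hermitian functional calculus yields $f(A_{\bm{n}}) = A_{\bm{n}}^{\frac{1}{2}}$, the unique positive definite square root. Applying condition (7) of Theorem \ref{theorem GLT properties} then gives $\{A_{\bm{n}}^{\frac{1}{2}}\}_n \sim_{GLT} f(\chi)$, and since $\chi\geq 0$ a.e., $f(\chi)=\chi^{\frac{1}{2}}$ a.e., as required.

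For the inverse square root $A_{\bm{n}}^{-\frac{1}{2}}$, I would first invoke condition (5) of Theorem \ref{theorem GLT properties}: because $\chi\geq 0$ with $\chi\neq 0$ a.e., we have $\chi>0$ a.e., so $\{A_{\bm{n}}^{-1}\}_n \sim_{GLT} \chi^{-1}$, where each $A_{\bm{n}}^{-1}$ is Hermitian positive definite and $\chi^{-1}$ is a real-valued function, strictly positive a.e. I would then repeat the functional-calculus argument above with $A_{\bm{n}}^{-1}$ playing the role of $A_{\bm{n}}$ and $\chi^{-1}$ that of $\chi$, concluding $\{A_{\bm{n}}^{-\frac{1}{2}}\}_n \sim_{GLT} (\chi^{-1})^{\frac{1}{2}} = \chi^{-\frac{1}{2}}$.

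The only delicate point, which is the main (and essentially routine) obstacle, is that Theorem \ref{theorem GLT properties}(7) is stated for functions $f$ continuous on all of $\mathbb{C}$, whereas the natural square root is defined only on $[0,\infty)$. The extension displayed above resolves this: both the matrix function $f(A_{\bm{n}})$ and the scalar symbol $f(\chi)$ are insensitive to the choice of continuous extension, since the spectra of the matrices and the essential range of $\chi$ both lie in $\mathbb{R}_{+}$, where any admissible extension agrees with $\sqrt{\cdot}$.
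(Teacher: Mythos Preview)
Your proposal is correct and follows essentially the same approach as the paper: both invoke condition (7) of Theorem \ref{theorem GLT properties} with $f(x)=x^{1/2}$ applied via the Hermitian functional calculus. You are in fact more careful than the paper, which simply takes $f:(0,\infty)\to(0,\infty)$ without addressing the domain mismatch with the hypothesis $f:\mathbb{C}\to\mathbb{C}$; your explicit continuous extension and your separate treatment of $A_{\bm{n}}^{-1/2}$ via condition (5) fill in details the paper leaves implicit.
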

\begin{proof}
Let the function $f : (0,\infty) \mapsto (0,\infty)$, be defined as $f(x) = x^{\frac{1}{2}}$. Then, from Theorem \ref{theorem GLT properties} (condition (7.)), we have that $\{f(A_{\bm{n}})\}_n \sim_{GLT} f(\chi)$, where $f(A_{\bm{n}})$ is interpreted as a matrix function applied to the eigenvalues of matrix $A_{\bm{n}}$. 
\end{proof}
\begin{theorem} \label{theorem second schur complement preconditioner}
Let Assumption \textnormal{\ref{Assumption GLT matrices}} hold, and assume that we have available the sequences $\{\{\tilde{D}_{\bm{n},m}\}_n\}_m$, $\{\{\tilde{J}_{1_{\bm{n},m}}\}_n\}_m,$ and $\{\{\tilde{J}_{2_{\bm{n},m}}\}_n\}_m$, satisfying the conditions of Proposition \textnormal{\ref{Prop. ACS matrices}}. By defining $\hat{S}_{\bm{n},m}$ as in \eqref{second schur complement preconditioner}, we have:
\begin{itemize}
\item $\{\hat{S}_{\bm{n},m}\}_n \xrightarrow{\textnormal{a.c.s.}} \{S_{\bm{n}} + E_{\bm{n}}\}_n$, where 
\[\{E_{\bm{n}}\}_{n} \coloneqq \bigg\{M^{\frac{1}{2}}_{1_{\bm{n}}}M^{-\frac{1}{2}}_{2_{\bm{n}}} D_{\bm{n}} + D^*_{\bm{n}}  M^{-\frac{1}{2}}_{2_{\bm{n}}} M^{\frac{1}{2}}_{1_{\bm{n}}} \bigg\}_n \sim_{GLT} \epsilon,\]
\noindent with 
\[\epsilon \coloneqq  \rho(\xi_{1} + \delta^{-1})^{\frac{1}{2}}((\xi_{2} + \delta^{-1})^{-1} + \delta)^{-\frac{1}{2}} (\kappa + \bar{\kappa}). \]
\noindent Furthermore, $\{\hat{S}_{\bm{n},m}\}_n \sim_{GLT} \tau_m + \epsilon_m$ and $\tau_m + \epsilon_m \rightarrow \tau + \epsilon$ in measure, where $\tau$ is defined in Theorem \textnormal{\ref{theorem GLT preserving method}}, and $\tau_m,\ \epsilon_m$, $\epsilon$ are measurable functions having the same domain as $\tau$.  If $\tilde{E}_{\bm{n},m} \coloneqq \tilde{M}^{\frac{1}{2}}_{1_{\bm{n},m}}\tilde{M}^{-\frac{1}{2}}_{2_{\bm{n},m}} \tilde{D}_{\bm{n},m} + \tilde{D}^*_{\bm{n},m}  \tilde{M}^{-\frac{1}{2}}_{2_{\bm{n},m}} \tilde{M}^{\frac{1}{2}}_{1_{\bm{n},m}}$ is positive semi-definite for all $m$ and $n$, then the sequence of preconditioned normal equations' matrices is such that: 
\[\{\hat{S}_{\bm{n},m}^{-1} S_{\bm{n}}\}_n \sim_{GLT} \tau(\tau_m + \epsilon_m)^{-1} \rightarrow \tau (\tau + \epsilon)^{-1},\textnormal{ as } m \rightarrow \infty,\]
\noindent and there exist positive constants $C_{\dagger_1}$, $C_{\dagger_2}$, independent of $n,\ m$, such that $\lambda(\hat{S}_{\bm{n},m}^{-1} S_{\bm{n}}) \in [C_{\dagger_1},C_{\dagger_2}]$, for all $n,\ m$.
\item If the matrix sequences $\{ J_{1_{\bm{n}}}\}_n$, $\{J_{2_{\bm{n}}}\}_n$, and $\{D_{\bm{n}}\}_n$ are such that $J_{1_{\bm{n}}}$ and $J_{2_{\bm{n}}}$ are scaled identities or zero matrices, while $D_{\bm{n}} + D^*_{\bm{n}}$ is Hermitian positive semi-definite, then the matrix-sequence $\{\hat{S}_{\bm{n},m}^{-1} S_{\bm{n}}\}_n$ is weakly clustered at $[\frac{1}{2},1]$. If furthermore the matrix-sequence $\{\tilde{D}_{\bm{n},m}^{-1}D_{\bm{n}}\}_n$ is strongly clustered at $1$, then the matrix-sequence $\{\hat{S}_{\bm{n},m}^{-1} S_{\bm{n}}\}_n$ is strongly clustered at $[\frac{1}{2},1]$.
\end{itemize}
\end{theorem}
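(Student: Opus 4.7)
The overall plan is to reduce the statement to Theorem \ref{theorem first schur complement preconditioner} plus a controlled cross-term. The starting point is the algebraic identity obtained by expanding $\hat{D}_{\bm{n},m}\tilde{M}_{2_{\bm{n},m}}^{-1}\hat{D}_{\bm{n},m}^*$ into four summands: two of them reassemble the first preconditioner $\tilde{S}_{\bm{n},m}$ of \eqref{first schur complement preconditioner}, while the remaining two are exactly $\tilde{E}_{\bm{n},m}$, so $\hat{S}_{\bm{n},m}=\tilde{S}_{\bm{n},m}+\tilde{E}_{\bm{n},m}$. Theorem \ref{theorem first schur complement preconditioner} supplies $\{\tilde{S}_{\bm{n},m}\}_n \xrightarrow{\textnormal{a.c.s.}} \{S_{\bm{n}}\}_n$, so the remaining task is to handle $\tilde{E}_{\bm{n},m}$. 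For this I would apply Lemma \ref{lemma square root of spd matrices} to the Hermitian positive definite GLT sequences $\{\tilde{M}_{i_{\bm{n},m}}\}_n$ (whose symbols are bounded strictly away from zero thanks to the $\delta^{-1}I_{\bm{n}}$ summand), obtaining GLT square roots, and then push the a.c.s.\ limit through the triple product $\tilde{M}_{1_{\bm{n},m}}^{1/2}\tilde{M}_{2_{\bm{n},m}}^{-1/2}\tilde{D}_{\bm{n},m}$ via Theorem \ref{theorem a.c.s. algebra}. Linearity of the a.c.s.\ limit then combines both pieces into $\{\hat{S}_{\bm{n},m}\}_n\xrightarrow{\textnormal{a.c.s.}}\{S_{\bm{n}}+E_{\bm{n}}\}_n$.

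The GLT symbol $\tau_m+\epsilon_m$ and the explicit formula for $\epsilon$ follow by applying Theorem \ref{theorem GLT properties} (items (2)--(5)) to the same decomposition, and the in-measure convergence $\tau_m+\epsilon_m\to\tau+\epsilon$ is then immediate from Lemma \ref{lemma convergence in measure} combined with the measure-convergence of $\kappa_m,\xi_{1_m},\xi_{2_m}$ supplied by Proposition \ref{Prop. ACS matrices}. For the uniform spectral bounds on $\{\hat{S}_{\bm{n},m}^{-1}S_{\bm{n}}\}_n$ under $\tilde{E}_{\bm{n},m}\succeq 0$, I would mimic the third bullet of Theorem \ref{theorem first schur complement preconditioner}: the uniform norm bounds on $J_{i_{\bm{n}}},\tilde{J}_{i_{\bm{n},m}},D_{\bm{n}},\tilde{D}_{\bm{n},m}$ together with the $\rho/\delta$ lower spectral bound on $M_{1_{\bm{n}}}$ and $\tilde{M}_{1_{\bm{n},m}}$ propagate through \eqref{second schur complement preconditioner} to yield two-sided spectral bounds on both $\hat{S}_{\bm{n},m}$ and $S_{\bm{n}}$ that are independent of $n$ and $m$, whence the constants $C_{\dagger_1},C_{\dagger_2}$. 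The stated GLT distribution of $\{\hat{S}_{\bm{n},m}^{-1}S_{\bm{n}}\}_n$ then follows from Theorem \ref{theorem GLT properties} items (4)--(5).

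For the second bullet, the scaled-identity hypothesis collapses $\tilde{M}_{1_{\bm{n},m}}=M_{1_{\bm{n}}}=aI_{\bm{n}}$ and $\tilde{M}_{2_{\bm{n},m}}=M_{2_{\bm{n}}}=bI_{\bm{n}}$ for positive constants $a,b$, so \eqref{second schur complement preconditioner} becomes the clean factorization $\hat{S}_{\bm{n},m}=b^{-1}(\tilde{D}_{\bm{n},m}^*+(ab)^{1/2}I_{\bm{n}})(\tilde{D}_{\bm{n},m}+(ab)^{1/2}I_{\bm{n}})$. Setting $T_{\bm{n}}=b^{-1}(D_{\bm{n}}^*+(ab)^{1/2}I_{\bm{n}})(D_{\bm{n}}+(ab)^{1/2}I_{\bm{n}})=S_{\bm{n}}+E_{\bm{n}}$, a pointwise AM--GM estimate using $D_{\bm{n}}+D_{\bm{n}}^*\succeq 0$ gives $E_{\bm{n}}\preceq S_{\bm{n}}$, hence the eigenvalues of $T_{\bm{n}}^{-1}S_{\bm{n}}$ lie exactly in $[\tfrac{1}{2},1]$. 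Factoring $\hat{S}_{\bm{n},m}^{-1}S_{\bm{n}}=(\hat{S}_{\bm{n},m}^{-1}T_{\bm{n}})(T_{\bm{n}}^{-1}S_{\bm{n}})$ and using the first bullet together with Theorem \ref{theorem a.c.s. algebra} (applicable since $T_{\bm{n}}$ is bounded below and hence s.v.) gives $\hat{S}_{\bm{n},m}^{-1}T_{\bm{n}}\xrightarrow{\textnormal{a.c.s.}}I_{\bm{n}}$; unfolding the rank-$o(d_n)$ and small-norm decomposition from Definition \ref{definition a.c.s.} then localizes the eigenvalues of the product near $[\tfrac{1}{2},1]$, giving the weak cluster. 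Under the additional strong clustering of $\{\tilde{D}_{\bm{n},m}^{-1}D_{\bm{n}}\}_n$ at $1$, the rank term in that decomposition drops from $o(d_n)$ to $O(1)$, upgrading weak to strong clustering.

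The main difficulty is the interaction between the matrix square roots appearing in $\tilde{E}_{\bm{n},m}$ and the a.c.s.\ calculus. Lemma \ref{lemma square root of spd matrices} delivers the GLT structure of $\tilde{M}_{i_{\bm{n},m}}^{\pm 1/2}$, but in order to deploy Theorem \ref{theorem a.c.s. algebra} on the triple product I also need uniform spectral control of these square roots jointly in $n$ and $m$, which must be extracted from the strictly positive lower bound on the symbols $\xi_{i_m}+\delta^{-1}$ inherited via Proposition \ref{Prop. ACS matrices}.
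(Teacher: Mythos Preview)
Your proposal is correct and follows essentially the same route as the paper's own proof: the key decomposition $\hat{S}_{\bm{n},m}=\tilde{S}_{\bm{n},m}+\tilde{E}_{\bm{n},m}$, the appeal to Lemma \ref{lemma square root of spd matrices} and Theorem \ref{theorem a.c.s. algebra} for the cross-term, and the second-bullet argument via the auxiliary matrix $T_{\bm{n}}$ (the paper's $\bar{S}_{\bm{n}}$) with the factorization $\hat{S}_{\bm{n},m}^{-1}S_{\bm{n}}=(\hat{S}_{\bm{n},m}^{-1}T_{\bm{n}})(T_{\bm{n}}^{-1}S_{\bm{n}})$ all match exactly. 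The only visible difference is that you supply the AM--GM estimate for $\lambda(T_{\bm{n}}^{-1}S_{\bm{n}})\in[\tfrac12,1]$ explicitly, whereas the paper outsources this step to \cite[Theorem 4.1]{paper_65}; your self-contained version is equivalent and arguably cleaner.
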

\begin{proof}
\par Firstly, notice that from \eqref{second schur complement preconditioner} we obtain the following expression:
\[\hat{S}_{\bm{n},m} = \tilde{S}_{\bm{n},m} + \tilde{M}^{\frac{1}{2}}_{1_{\bm{n}}}\tilde{M}^{-\frac{1}{2}}_{2_{\bm{n}}} \tilde{D}_{\bm{n}} + \tilde{D}^*_{\bm{n}}  \tilde{M}^{-\frac{1}{2}}_{2_{\bm{n}}} \tilde{M}^{\frac{1}{2}}_{1_{\bm{n}}},\]
\noindent where $\tilde{S}_{\bm{n},m}$ is defined as in \eqref{first schur complement preconditioner}. Then, the first part of the Theorem can be proved by employing Lemma \ref{lemma square root of spd matrices} and by performing a similar analysis to that of the proof of the first and third conditions of Theorem \ref{theorem first schur complement preconditioner}. For brevity, the latter is omitted.
\par We proceed by proving the second condition. Notice that if $J_{1_{\bm{n}}}$ and $J_{2_{\bm{n}}}$ are scaled identities or zero matrices (the latter being mostly of theoretical interest), then we can represent them exactly, that is $\tilde{J}_{1_{\bm{n},m}} = J_{1_{\bm{n}}}$ and $\tilde{J}_{2_{\bm{n},m}} = J_{2_{\bm{n}}}$, for all $m$ and $n$. The latter implies that $M_{1_{\bm{n}}}$, $M_{2_{\bm{n}}}$ are scaled identities and we can write $M_{\bm{n}} = M_{1_{\bm{n}}} = \frac{1}{c_s} M_{2_{\bm{n}}}$, for some positive constant $c_s$. We define the following matrix:
\[ \bar{S}_{\bm{n}} = \frac{1}{c_s}(D^*_{\bm{n}} + \sqrt{c_s}M_{\bm{n}})M_{\bm{n}}^{-1}(D_{\bm{n}} + \sqrt{c_s}M_{\bm{n}}).\]
\noindent Following exactly the developments in \cite[Theorem 4.1]{paper_65} (since $D + D^* \succeq  0$), we can consider the generalized eigenproblem $\bar{S}^{-1}_{\bm{n}}S_{\bm{n}} x = \mu x$, and show that $\lambda(\bar{S}^{-1}_{\bm{n}} S_{\bm{n}}) \in [\frac{1}{2},1]$, where $S_{\bm{n}}$ is defined as in \eqref{generic ADMM normal equations}, $\mu$ is an arbitrary eigenvalue of the preconditioned matrix $\bar{S}^{-1}_{\bm{n}}S_{\bm{n}}$ and $x$ the corresponding eigenvector. 
\par Let us now notice that by Assumption \ref{Assumption GLT matrices}, the matrix-sequence $\{\bar{S}_{\bm{n}}\}_n$ is a GLT sequence. In particular, it is easy to see that $\{S_{\bm{n}} + E_{\bm{n}}\}_n \equiv \{\bar{S}_{\bm{n}}\}_n$ and hence its symbol is $\tau + \epsilon$, where $\epsilon$ is defined in the first condition of this Theorem. Again, from the first condition of this Theorem, we have that the preconditioner defined in \eqref{second schur complement preconditioner} is such that $\{\hat{S}_{\bm{n},m}\}_n \xrightarrow{\textnormal{a.c.s.}} \{S_{\bm{n}} + E_{\bm{n}}\}_n \equiv \{\bar{S}_{\bm{n}}\}_n$, and $\{\hat{S}_{\bm{n},m}\}_n \sim_{GLT} \tau_m + \epsilon_m$ with $\tau_m + \epsilon_m \rightarrow \tau + \epsilon$ in measure. Then, from Theorem \ref{theorem a.c.s. algebra} we know that $\{\hat{S}^{-1}_{\bm{n},m}\}_n \xrightarrow{\textnormal{a.c.s.}} \{\bar{S}^{-1}_{\bm{n}}\}_n$. Using Definition \ref{definition a.c.s.}, we have that for all $n \geq n_m$, we can write:
\begin{equation} \label{eq: 1 thm matching prec }
\bar{S}^{-1}_{\bm{n}} = \hat{S}^{-1}_{\bm{n},m} + R_{\bm{n},m} + N_{\bm{n},m},\quad \textnormal{rank}(R_{\bm{n},m}) \leq c(m)N(\bm{n}),\quad \|N_{\bm{n},m}\| \leq \omega(m), 
\end{equation}
\noindent where $n_m$, $c(m)$ and $\omega(m)$ depend only on $m$ and are such that:
\begin{equation*} 
 \lim_{m \rightarrow \infty} c(m) = \lim_{m \rightarrow \infty} \omega(m) = 0.
 \end{equation*}
\noindent In view of the previous, we can analyze the sequence $\{\bar{S}^{-1}_{\bm{n}}S_{\bm{n}} - \hat{S}^{-1}_{\bm{n},m}S_{\bm{n}}\}_n$ as follows:
\begin{equation*}
\begin{split}
\bar{S}^{-1}_{\bm{n}}S_{\bm{n}} - \hat{S}^{-1}_{\bm{n},m}S_{\bm{n}} = (\bar{S}^{-1}_{\bm{n}} - \hat{S}^{-1}_{\bm{n},m})S_{\bm{n}} & = (R_{\bm{n},m} + N_{\bm{n},m})S_{\bm{n}},
\end{split}
\end{equation*}
\noindent where $\textnormal{rank}(R_{\bm{n},m}S_{\bm{n}}) \leq \textnormal{rank}(R_{\bm{n},m}) \leq c(m) N(\bm{n})$ and $\|N_{\bm{n},m}S_{\bm{n}}\|  \leq \omega(m) \|S_{\bm{n}}\| = \Theta(\omega(m))$. In other words, we have that $\{ \bar{S}^{-1}_{\bm{n}}S_{\bm{n}} - \hat{S}^{-1}_{\bm{n},m}S_{\bm{n}}\}_n$ is weakly clustered at zero. Furthermore, as $\lambda(\bar{S}^{-1}_{\bm{n}} S_{\bm{n}}) \in [\frac{1}{2},1]$, we conclude that  $\{\hat{S}^{-1}_{\bm{n},m}S_{\bm{n}}\}_n$ is weakly clustered at $[\frac{1}{2},1]$. 
\par Finally, if we assume that $\{\tilde{D}^{-1}_{\bm{n},m}D_{\bm{n}}\}_n$ is strongly clustered at $1$, and by noting that $M_{\bm{n}}$ is a scaled identity (and hence $\tilde{M}_{\bm{n},m} = M_{\bm{n}},$ for all $n,\ m$), we can conclude that \eqref{eq: 1 thm matching prec } holds for $R_{\bm{n},m}$ such that $\textnormal{rank}(R_{\bm{n},m}) = O(1)$. By employing a similar methodology as before, this yields that $\{\hat{S}^{-1}_{\bm{n},m}S_{\bm{n}}\}_n$ is strongly clustered at $[\frac{1}{2},1]$. 
\end{proof}

\begin{remark}
Let us now briefly discuss the applicability of the preconditioner in \eqref{second schur complement preconditioner}. Firstly, it is important to notice that such a preconditioner is generally only sensible when the approximating matrices $\tilde{J}_{1_{\bm{n},m}}$ and $\tilde{J}_{2_{\bm{n},m}}$ are diagonal, circulant or zero. If this is not the case, we discuss a remedy in Section \textnormal{\ref{subusubsection Generalized ADMM}}. In many applications of interest, the preconditioner $\tilde{D}_{\bm{n},m}$ has a diagonal times a multilevel banded Toeplitz structure (e.g. \textnormal{\cite{CD,paper_60}}). The application of the preconditioner in \eqref{second schur complement preconditioner} consists in a single (LU or, if applicable, Cholesky) factorization of $\hat{D}_{\bm{n},m}$ at the beginning of the optimization, and subsequently two backward solves for every ADMM iteration. Such an approach should be feasible, in terms of memory and computational requirements, as long as the problem dimensions are not very large and the bandwidth of the matrix $\tilde{D}_{\bm{n},m}$ is small. For high-dimensional problems, one could employ an incomplete or specialized factorization (e.g. \textnormal{\cite{CD}}), possibly assisted by suitable low-rank updates, if necessary. In some cases, replacing the factorization with a specialized iterative solver (such as a multigrid method as in \textnormal{\cite{paper_60}}) could be beneficial. However, it is important to note that factorization (complete or incomplete) needs to be computed only once. An alternative employing low-rank approximations of the associated matrices is discussed in the following Remark. The suitability of each of the aforementioned approaches depends heavily on the problem under consideration.
\end{remark}
\begin{remark}
As mentioned earlier, the proposed preconditioner in \eqref{second schur complement preconditioner} allows one to use a variety of approximations for the blocks of the matrix in \eqref{generic ADMM subproblems 1and3 optimality conditions}, based on the GLT theory, under certain conditions (which hold for a wide class of problems, such as the problem considered in Section \textnormal{\ref{section FDE problem})}. In fact, this preconditioner can be seen as an approximation of the preconditioner in \eqref{first schur complement preconditioner}, which in turn has limited applicability unless the approximating blocks have a multilevel circulant structure. 
\par The limitations of preconditioner \eqref{second schur complement preconditioner}  depend on the problem under consideration. In particular, if the assumptions of the second condition of Theorem \textnormal{\ref{theorem second schur complement preconditioner}} hold, then it can serve as a basis for constructing easily computable optimal preconditioners. 
Furthermore, if the aforementioned assumptions hold for the problem under consideration, one might be able to use a tensor product approach with low-rank approximations of the matrices in \eqref{generic ADMM subproblems 1and3 optimality conditions} to solve problem \eqref{discretized generic inverse problem}. Such solvers can be extremely effective, allowing one to solve high-dimensional problems, however they tend to require that various features of the problem (e.g. initial conditions, desired state, boundary conditions, discrete solutions) are approximated in a low-rank format, which is not always the case. The proposed approach would allow one to create a rather general tensor product solver for inverse problems measuring the discrepancy of the state variable $\rm{y}$ from a desired state $\rm{\bar{y}}$ as well as the size of the control $\rm{u}$ in the $L^2$-norm, where the structure of the problem allows this. Such solvers have been proposed in \textnormal{\cite{paper_2,HKKS}} for the equality constrained case, and hence the proposed methodology could allow one to further generalize these approaches. Additionally, many low-rank tensor product solvers in the literature require that the objective function has a scaled identity Hessian. This can be alleviated here, by making use of the generalized ADMM presented in Section \textnormal{\ref{subusubsection Generalized ADMM}}, alongside the preconditioner in \eqref{second schur complement preconditioner}.
\end{remark}
\subsubsection{Element-wise Schur complement approximation}

\par In the context of finite element methods, a popular preconditioner is the so-called element-wise (also known as additive or element-by-element) Schur complement approximation. As this approach has been analyzed multiple times, we only mention it here as a viable alternative for preconditioning the normal equations in  \eqref{generic ADMM normal equations} and refer the interested reader to the available literature. In particular, such preconditioners have been analyzed using the GLT theory in \cite{paper_49,paper_50}. An analysis for general problems can be found in \cite{paper_51} and the references therein. These preconditioners can be very effective (in fact optimal under reasonable and general assumptions). Furthermore, they can efficiently be implemented in a parallel environment, allowing one to solve huge-scale problem instances (see e.g. \cite{paper_52,paper_50}).

\subsection{General quadratic objective function}
\par As we stressed earlier, it could be the case that both $J_{1_{\bm{n}}}$ and $J_{2_{\bm{n}}}$ are general positive semi-definite matrices, whose inverses (if they exist) are expensive to compute. As a consequence, the normal equations could be prohibitively expensive to form. In order to tackle such problems, we propose two alternatives. The former simply avoids forming the normal equations and solves \eqref{generic ADMM subproblems 1and3 optimality conditions} instead, using an appropriate Krylov subspace method. The latter approach generalizes the algorithmic framework in Algorithm \ref{ADMM algorithm}, allowing us to simplify the resulting sub-problems. Then, the simplified sub-problems can be solved using PCG alongside any of the previously presented Schur complement approximations.
\subsubsection{A saddle point approximation}
\par In many applications, forming a Schur complement of system \eqref{generic ADMM subproblems 1and3 optimality conditions} would be very costly. Instead, one could solve system \eqref{generic ADMM subproblems 1and3 optimality conditions}, which can be seen as a regularized saddle point system. Among many other iterative methods, one could employ preconditioned MINRES to solve systems of this form. The aforementioned method allows only the use of a positive definite preconditioner, hence, many block preconditioners for \eqref{generic ADMM subproblems 1and3 optimality conditions} are not applicable. For instance, block-triangular preconditioners, motivated by the work in \cite{Ipsen01,MGW00}, would generally require a non-symmetric solver such as GMRES \cite{SaSc86}. However, block-diagonal preconditioners have been shown to be very effective and efficient in practice for systems of the form of \eqref{generic ADMM subproblems 1and3 optimality conditions} (see for example \cite{paper_43,paper_44,paper_42}). To that end, we can define the following positive definite block-diagonal preconditioner:

\begin{equation} \label{generic block-diagonal preconditioner}
\tilde{K}_{\bm{n},m} = \begin{bmatrix}
\rho(\tilde{J}_{1_{\bm{n},m}} + \frac{1}{\delta} I_{\bm{n}}) & 0 & 0\\
0 & \rho(\tilde{J}_{2_{\bm{n},m}} + \frac{1}{\delta} I_{\bm{n}}) & 0\\
0 & 0 & \tilde{S}_{\bm{n},m}
\end{bmatrix},
\end{equation}
\noindent where $\tilde{S}_{\bm{n},m}$ can be defined as in \eqref{first schur complement preconditioner} or as in \eqref{second schur complement preconditioner} (and indeed any other suitable Schur complement approximation), assuming that we have available two sparse sequences of $d$-level GLT sequences $\{\{\tilde{J}_{1_{\bm{n},m}}\}_n\}_m$ and $\{\{\tilde{J}_{2_{\bm{n},m}}\}_n\}_m$, satisfying the conditions of Proposition \ref{Prop. ACS matrices}. We note that preconditioners similar to \eqref{generic block-diagonal preconditioner} have been analyzed multiple times in the literature and hence such an analysis is omitted here (see for example \cite{paper_43,paper_44,paper_45,paper_46,paper_42}). It is important to notice that the quality of the preconditioner in \eqref{generic block-diagonal preconditioner} depends heavily on the quality of the Schur complement approximation, as well as on the approximations of the $(1,1)$ and $(2,2)$ blocks of the matrix in \eqref{generic ADMM subproblems 1and3 optimality conditions}, which can be computed by making use of the GLT theory.

\subsubsection{Generalized ADMM} \label{subusubsection Generalized ADMM} 

\par Instead of solving the saddle point system in \eqref{generic ADMM subproblems 1and3 optimality conditions}, one could derive the following generalized ADMM algorithm, as described in Algorithm \ref{generalized ADMM algorithm}. The following methodology is presented for completeness and is focused on the case where all the associated matrices as well as state and control variables are real. One could apply it to the complex case, however, in that case the theory derived in \cite{paper_53} to support such methods, would no longer hold.
\begin{algorithm}[!ht]
\caption{(2-Block) Generalized ADMM }
    \label{generalized ADMM algorithm}
    \textbf{Input:}  Let $y_{\bm{n}}^0,u_{\bm{n}}^0,z_{y_{\bm{n}}}^0,z_{u_{\bm{n}}}^0,p_{\bm{n}}^0,w_{y_{\bm{n}}}^0,w_{u_{\bm{n}}}^0 \in \mathbb{R}^{N(\bm{n})}$, $\delta > 0$, $\rho \in (0,1]$, $R_y \succ 0$, $R_u \succ 0$. 
\begin{algorithmic}
\For {($j = 0,1,\dotsc$)}
 \begin{subequations} 
\begin{align}
(y_{\bm{n}}^{j+1},u_{\bm{n}}^{j+1}) =&\  \underset{y_{\bm{n}},u_{\bm{n}}}{\arg\min}\bigg\{\mathcal{L}_{\delta}(y_{\bm{n}},u_{\bm{n}},z^{j}_{y_{\bm{n}}},z^{j}_{u_{\bm{n}}},p^j_{\bm{n}},w^j_{y_{\bm{n}}},w^j_{u_{\bm{n}}})\notag \\&  +\frac{1}{2}(y_{\bm{n}}-y_{\bm{n}}^j)^T R_{y}(y_{\bm{n}}-y_{\bm{n}}^j) + (u_{\bm{n}}-u_{\bm{n}}^j)^T R_{u}(u_{\bm{n}}-u_{\bm{n}}^j)\bigg\}  \label{generalized ADMM subproblem 1}\\ 
(z_{y_{\bm{n}}}^{j+1},z_{u_{\bm{n}}}^{j+1}) =&\  \underset{z_y \in [y_{a},y_{b}],\ z_u \in [u_{a},u_{b}]}{\arg\min} \big \{ \mathcal{L}_{\delta}(y_{\bm{n}}^{j+1},u_{\bm{n}}^{j+1},z_{y_{\bm{n}}},z_{u_{\bm{n}}},p_{\bm{n}}^j,w_{y_{\bm{n}}}^j,w_{u_{\bm{n}}}^j)\big\}   \label{generalized ADMM subproblem 2}\\ 
p_{\bm{n}}^{j+1} = &\ p_{\bm{n}}^j + \frac{\rho}{\delta}(D_{\bm{n}}y_{\bm{n}}^{j+1}+ u_{\bm{n}}^{j+1} - g_{\bm{n}})\label{generalized ADMM dual update 1}\\
(w_{y_{\bm{n}}}^{j+1},w_{u_{\bm{n}}}^{j+1}) =&\ \left(w_{y_{\bm{n}}}^j+ \frac{\rho}{\delta}(y_{\bm{n}}^{j+1}-z_{y_{\bm{n}}}^{j+1}), w_{u_{\bm{n}}}^j + \frac{\rho}{\delta}(u_{\bm{n}}^{j+1}-z_{u_{\bm{n}}}^{j+1})\right) \label{generalized ADMM dual update 2 and 3}
\end{align}
\end{subequations}
\EndFor
\end{algorithmic}
\end{algorithm}
\par There are two major differences between Algorithm \ref{ADMM algorithm} and Algorithm \ref{generalized ADMM algorithm}. In the latter method, we have added an extra proximal term in problem \eqref{ADMM subproblem 1}, which belongs to the class of Bregman distances, and indeed is produced by the Bregman function $\|\cdot\|_{R}$, where $R = R_y \oplus R_u$, $R_y \succ 0$, and $R_u \succ 0$. For a detailed derivation of proximal methods using Bregman distances, the reader is referred to \cite{paper_54} and the references therein. The second difference is that, in the general case, Algorithm \ref{generalized ADMM algorithm} requires that the step-size $\rho$ lies in a smaller interval than that allowed in Algorithm \ref{ADMM algorithm}. In fact, the allowed values for $\rho$ depend on the choice of $R_y$ and $R_u$. We refer the reader to \cite{paper_53} for a more general derivation of methods similar to Algorithm \ref{generalized ADMM algorithm}, in which a precise condition is given for the maximum allowed values of $\rho$, so that the method converges globally. Furthermore, the authors in \cite{paper_53} prove linear convergence of the method under different sets of conditions, one of which requires that $J_{1_{\bm{n}}} \succ 0$ and $J_{2_{\bm{n}}} \succ 0$. 
\par In light of the previous discussion, we can choose:
\[ R_{y_{\bm{n}}} = c_y I_{\bm{n}} - J_{1_{\bm{n}}},\quad  R_{u_{\bm{n}}} = c_u I_{\bm{n}} - J_{2_{\bm{n}}},\]
\noindent where $c_y,\ c_u > 0$ are such that $R_{y_{\bm{n}}} \succ 0$,  $R_{u_{\bm{n}}} \succ 0$. With these choices of $R_{y_{\bm{n}}}$ and $R_{u_{\bm{n}}}$, the optimality conditions of \eqref{generalized ADMM subproblem 1} and \eqref{generalized ADMM dual update 1} involve the coefficient matrix:
\begin{equation} \label{generic generalized ADMM subproblems 1and3 optimality operator}
\begin{bmatrix}
\rho (c_y + \frac{1}{\delta}) I_{\bm{n}} & 0 & D_{\bm{n}}^*\\
0 & \rho(c_u + \frac{1}{\delta})I_{\bm{n}} &  I_{\bm{n}}\\
D_{\bm{n}} &  I_{\bm{n}} & -\frac{\delta}{\rho} I_{\bm{n}}
\end{bmatrix}.
\end{equation}
\noindent As one can easily observe, the normal equations operator of \eqref{generic generalized ADMM subproblems 1and3 optimality operator} can be efficiently applied to a vector, and all the previously presented Schur complement approximations can be used within PCG to accelerate the solution of the new simplified sub-problems. Furthermore, notice that this way we can ensure that the $(1,1)$ and $(2,2)$ blocks of the matrix in \eqref{generic generalized ADMM subproblems 1and3 optimality operator} are scaled identities, and hence the preconditioner in \eqref{second schur complement preconditioner} can be particularly effective (see Theorem \ref{theorem second schur complement preconditioner}). 
\par We should note at this point that similar methodologies can be employed to enforce certain structure on the associated matrices of problem \eqref{Generic ADMM Full problem}. While this can be very effective in some cases, by making the ADMM sub-problems easy to solve, it should be used with caution. On the one hand $\rho$ is required, in general, to take values in the interval $(0,1]$. In practice, the larger the value of $\rho$, the faster the convergence of ADMM. More importantly, if the constants $c_y$ and $c_u$ are large, we essentially regularize the problem strongly (i.e. we force a large $\delta$, in the case of Algorithm \ref{ADMM algorithm}). This means that tuning $\delta$ in Algorithm \ref{generalized ADMM algorithm} will not allow us to accelerate the algorithm significantly (which is not the case for Algorithm \ref{ADMM algorithm}).

\section{The FDE-Constrained Optimization Model} \label{section FDE problem}
 \par In this section, we present the FDE-constrained optimization problem studied hereon and provide details as to the FDE discretization used. We then highlight some important properties of the resulting discretized matrices. 
\par We define the \emph{Caputo derivative} of a function ${\rm f}(t)$ defined on $t\in[t_0,t_1]$, of real order $\alpha$ such that $n-1<\alpha<n$ with $n\in\mathbb{N}$, as follows:
\begin{equation*}
\ \DC{\alpha}{t_0}{t}\hspace{0.15em}{\rm f}(t)=\frac{1}{\Gamma(n-\alpha)}\int_{t_0}^{t}\frac{{\rm d}^{n}{\rm f}(s)}{{\rm d}s^n}\frac{{\rm d}s}{(t-s)^{\alpha-n+1}},
\end{equation*}
assuming convergence of the above \cite{DFFL,MT1,Podlubny}. We also define the left-sided and right-sided \emph{Riemann--Liouville derivatives} of a function ${\rm f}(x)$ defined on $x\in[x_0,x_1]$, of real order $\beta$ such that $n-1<\beta<n$ with $n\in\mathbb{N}$, as
\begin{align*}
\ \DRL{\beta}{x_0}{x}\hspace{0.15em}{\rm f}(x)={}&\frac{1}{\Gamma(n-\beta)}\frac{{\rm d}^{n}}{{\rm d}x^n}\int_{x_0}^{x}\frac{{\rm f}(s)\hspace{0.15em}{\rm d}s}{(x-s)^{\beta-n+1}}, \\
\ \DRL{\beta}{x}{x_1}\hspace{0.15em}{\rm f}(x)={}&\frac{(-1)^n}{\Gamma(n-\beta)}\frac{{\rm d}^{n}}{{\rm d}x^n}\int_{x}^{x_1}\frac{{\rm f}(s)\hspace{0.15em}{\rm d}s}{(s-x)^{\beta-n+1}},
\end{align*}
respectively. From this, we define the \emph{symmetric Riesz derivative} as follows \cite{Podlubny,SKM}:
\begin{equation}
\ \label{SymRiesz} \DR{\beta}{x}\hspace{0.15em}{\rm f}(x)=\frac{-1}{2 \cos(\frac{\beta \pi}{2})}\bigg(\hspace{-0.15em}\DRL{\beta}{x_0}{x}\hspace{0.15em}{\rm f}(x)+\DRL{\beta}{x}{x_1}\hspace{0.15em}{\rm f}(x)\bigg).
\end{equation}
\par We highlight that Caputo derivatives are frequently used for discretization of FDEs in time, given initial conditions, with Riemann--Liouville derivatives correspondingly considered for spatial derivatives, given boundary conditions. We consider the minimization problem:
\begin{equation} \label{FDEOpt}
\begin{split}
\text{min}_{{\rm y},{\rm u}} & \ \mathrm{J}({\rm y}(\bm{x},t),{\rm u}(\bm{x},t)) \\
 \text{s.t.}\ \  &   \left(\hspace{-0.15em}\DC{\alpha}{0}{t}-\DR{\beta_1}{x_1}-\DR{\beta_2}{x_2}\right){\rm y}(\bm{x},t) + {\rm u}(\bm{x},t) = {\rm g}(\bm{x},t), \\ 
 &\  {\rm y}_{a}(\bm{x},t) \leq {\rm y}(\bm{x},t) \leq {\rm y}_{b}(\bm{x},t),\quad {\rm u}_{a}(\bm{x},t) \leq {\rm u}(\bm{x},t) \leq {\rm u}_{b}(\bm{x},t),\\ 
\end{split}
\end{equation}
where the fractional differential equation and additional algebraic constraints are given on the space-time domain $\Omega\times(0,T)$, where $\Omega \subset \mathbb{R}^2$ has boundary $\partial\Omega$, and the spatial coordinates are given by $\bm{x}=[x_1,x_2]^T$.  We impose the initial condition ${\rm y}(\bm{x},0) =0$ at $t=0$, and the Dirichlet condition ${\rm y}=0$ on $\partial\Omega\times(0,T)$. We assume that the orders of differentiation satisfy $0<\alpha<1$, $1<\beta_1<2$, $1<\beta_2<2$. 
\par The cost functional $J({\rm y},{\rm u})$ measures the misfit between the \emph{state variable} ${\rm y}$ and a given \emph{desired state} $\bar{\rm y}$ in some given norm, and also measures the `size' of the \emph{control variable} $\rm{u}$. In this paper we consider the cost functional $J({\rm y},{\rm u})$ corresponding to $L^2$-norms measuring both terms:
\begin{equation} \label{FDE objective functional}
\ \mathrm{J}({\rm y},{\rm u})=\frac{1}{2}\int_{0}^{T}\int_{\Omega}({\rm y}-\bar{\rm y})^2~{\rm d}x{\rm d}t+\frac{\gamma}{2}\int_{0}^{T}\int_{\Omega}{\rm u}^2~{\rm d}x{\rm d}t.
\end{equation}
\noindent Here $\gamma>0$ denotes a regularization parameter on the control variable. We note that other variants for $\mathrm{J}({\rm y},{\rm u})$ are possible, including measuring the state misfit and/or the control variable in other norms, as well as alternative weightings within the cost functionals. We also emphasize that it is perfectly reasonable to consider such problems involving FDEs in one or three spatial dimensions (or indeed higher dimensions), rather than in two dimensions as in \eqref{FDEOpt}, and the methodology in this paper could be readily tailored to such problems.
\par Upon discretization, we consider the non-shifted Gr\"{u}nwald--Letnikov formula \cite{paper_2,Podlubny,SKM,paper_63} to approximate the Caputo derivative in time:
\begin{equation}
\ \label{C_GL} \DC{\alpha}{t_0}{t}\hspace{0.15em}{\rm y}(t) = \frac{1}{h_t^{\alpha}}\sum_{k=0}^{n_{t}-1}g_{k}^{\alpha}\hspace{0.15em}{\rm y}(t-k h_t) + O(h_t),
\end{equation}
where $h_t$ is the step-size in time, and $g_{k}^{\alpha}=\frac{\Gamma(k-\alpha)}{\Gamma(-\alpha)\Gamma(k+1)}$ may be computed recursively via $g_{k}^{\alpha}=(1-\frac{\alpha+1}{k})g_{k-1}^{\alpha}$, $k=1,2,...,\nu$, with $g_{0}^{\alpha}=1$ and $\nu \in \mathbb{N}$. This leads to the Caputo derivative matrix for all grid points in the time variable:
\begin{equation} \label{Caputo matrix}
\ \mathscr{C}^{\alpha}_{n_t}=\frac{1}{h_t^{\alpha}}\left[\begin{array}{ccccc}
g_0^{\alpha} & 0 & \cdots & \cdots  & 0 \\
g_1^{\alpha} & g_0^{\alpha} & \ddots &  & \vdots \\
\vdots & \ddots & \ddots & \ddots  & \vdots \\
\vdots &  & g_1^{\alpha} & g_0^{\alpha} & 0 \\
g_{n_{t}-1}^{\alpha} & \cdots & \cdots & g_1^{\alpha} & g_0^{\alpha} \\
\end{array}\right].
\end{equation}
\noindent For the (left-sided) spatial derivative we use the $p$-shifted Gr\"{u}nwald--Letnikov formula \cite{paper_62,MST,MT2,Podlubny}, with shift parameter $p = 1$, to minimize the local truncation error:
\begin{equation}
\ \label{RL_GL} \DRL{\beta}{x_0}{x}\hspace{0.15em}{\rm y}(x)=\frac{1}{h_x^{\beta}}\sum_{k=0}^{n}g_{k}^{\beta}\hspace{0.15em}{\rm y}(x-(k-1)h_x) + O(h_x),
\end{equation}
where $h_x$ is the step-size in space, leading to the matrix
\begin{equation*}
\ \mathscr{L}^{\beta,l}_n=\frac{1}{h_x^{\beta}}\left[\begin{array}{cccccc}
g_1^{\beta} & g_0^{\beta} & 0 & \cdots  & 0 \\
g_2^{\beta} & g_1^{\beta} & g_0^{\beta} & \ddots  & \vdots \\
\vdots & \ddots & \ddots  & \ddots & 0 \\
\vdots &  & g_2^{\beta} & g_1^{\beta} & g_0^{\beta} \\
g_n^{\beta} & \cdots & \cdots  & g_2^{\beta} & g_1^{\beta} \\
\end{array}\right],
\end{equation*}
whereby using the formula \eqref{SymRiesz} leads to the following Riemann--Liouville derivative matrix for the symmetrized Riesz derivative:
\begin{equation} \label{Riesz matrix}
\ \mathscr{L}^{\beta}_n=\frac{-1}{2 \cos(\frac{\beta \pi}{2})}\bigg(\mathscr{L}^{\beta,l}_n+(\mathscr{L}^{\beta,l}_n)^T\bigg).
\end{equation}
\noindent Using all the previous definitions, we can write the discretized version of the FDE constraint within \eqref{FDEOpt} as
\begin{equation} \label{FDE constraint equation}
 D_{\bm{n}}y_{\bm{n}} + u_{\bm{n}} = g_{\bm{n}},
\end{equation}
\noindent where $y_{\bm{n}}$, $u_{\bm{n}}$, $g_{\bm{n}}$ represent the discretized variants of ${\rm y}$, ${\rm u}$, ${\rm g}$, $\bm{n} = [n_x,n_y,n_t]$ is a $3$-index containing the grid sizes along each dimension, and
\begin{equation} \label{FDE constraint matrix}
D_{\bm{n}} = \mathscr{C}^{\alpha}_{n_t} \otimes I_{n_{x_1} \cdot n_{x_2}} - I_{n_t} \otimes\big( \mathscr{L}^{\beta_1}_{n_{x_1}} \otimes I_{n_{x_2}} +  I_{n_{x_1}}\otimes \mathscr{L}^{\beta_2}_{n_{x_2}} \big). 
\end{equation}
\noindent For simplicity of exposition, in the rest of the paper we assume that $h_{x_1} = h_{x_2} = h_x$, where $h_{x_i}$ is the discretization step in the respective spatial direction, noting that the method readily generalizes to problems where this is not the case.
\par By using the trapezoidal rule we approximate the two terms in the objective functional \eqref{FDE objective functional}, by
\begin{equation} \label{discretized Objective function}
J_{\bm{n}} = J_{1_{\bm{n}}} = \frac{1}{\gamma}J_{2_{\bm{n}}} = \begin{bmatrix}
I_{(n_t-1)\cdot n_{x_1} \cdot n_{x_2}} & 0\\
0 & \frac{1}{2} I_{n_{x_1} \cdot n_{x_2}}
\end{bmatrix}, 
\end{equation}
\noindent which is applied to vectors arising from every time-step, apart from the initial time $t = 0$. Notice that matrix $J_{\bm{n}}$ is diagonal with only two different values on the diagonal, and hence can be almost exactly approximated by a scaled identity. 

\par We should mention that we assume constant diffusion coefficients in the FDE constraints. In turn, this yields that the discretized constraint matrix has a multilevel Toeplitz structure. As we discuss in the following section, such matrices can be approximated by circulant preconditioners, which in turn allow us to use the preconditioner in \eqref{first schur complement preconditioner} for accelerating the solution of the resulting ADMM sub-problems. In the presence of non-constant diffusion coefficients, the discretized constraint matrices would belong to the class of multilevel GLT sequences. In this case, circulant preconditioners would no longer be effective and we would have to approximate such matrices using diagonal times multilevel banded Toeplitz matrices (see for example \cite{paper_61,paper_60}). In light of the discussion in Section \ref{Section A structure preserving method}, we can observe that one could extend the results presented in this paper to the non-constant diffusion coefficient case, by making use of the preconditioner in \eqref{second schur complement preconditioner} (upon noting that the discretization of the functional in \eqref{FDE objective functional} yields a diagonal matrix). For brevity of presentation this is left to a future study.
\par In the following proposition, we summarize some well-known properties of the fractional binomial coefficients that arise above when constructing the matrices $\mathscr{C}_{\alpha}$ and $\mathscr{L}_{\beta}$ (see for example \cite[page 397]{book_1}, or \cite{MT1,paper_18}):
\begin{proposition} \label{gamma prop}
Let $0 < \alpha < 1$ and $1 < \beta < 2$, with $g_k^{\alpha},\ g_k^{\beta}$ as in \eqref{C_GL}, \eqref{RL_GL}. Then, we have that:
\begin{equation}
\begin{split}
g_0^{\alpha} > 0,\ g_k^{\alpha} < 0,\ \forall\  k \geq 1,\quad \sum_{k=0}^{n_t} g_k^{\alpha} > 0,\ \forall\ n_t \geq 1, 
\end{split}
\end{equation}
\begin{equation}
\begin{split}
g_0^{\beta} = 1,\ g_1^{\beta} = -\beta,\ g_2^{\beta} > g_3^{\beta} > \dotsc > 0,\quad \sum_{k=0}^{\infty} g_k^{\beta} = 0,\ \sum_{k=0}^n g_k^{\beta} < 0,\ \forall\ n \geq  1.
\end{split}
\end{equation}
\end{proposition}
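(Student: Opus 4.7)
The plan is to reduce everything to the generating-function identity
\[\sum_{k=0}^{\infty} g_k^{\gamma} z^k = (1-z)^{\gamma},\]
valid for $|z| < 1$ and any real $\gamma > 0$, and extended up to $z=1$ by Abel's theorem because the telescoping identity $g_k^{\gamma} = (-1)^k \binom{\gamma}{k}$ (obtained directly from $\Gamma(k-\gamma)/[\Gamma(-\gamma)\,\Gamma(k+1)]$) together with $g_k^{\gamma}=O(k^{-1-\gamma})$ gives absolute summability on the closed disk whenever $\gamma>0$. In particular, one reads off $\sum_{k=0}^{\infty} g_k^{\alpha}=0$ and $\sum_{k=0}^{\infty} g_k^{\beta}=0$, which is the fourth of the listed identities and the key tool for the partial-sum inequalities.

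For the sign patterns I would rely on the recurrence $g_k^{\gamma}=\bigl(1-\tfrac{\gamma+1}{k}\bigr)g_{k-1}^{\gamma}$ stated just before \eqref{Caputo matrix}. When $\gamma=\alpha\in(0,1)$ the multiplier $1-(\alpha+1)/k$ is negative for $k=1$ and lies in $(0,1)$ for all $k\geq 2$; starting from $g_0^{\alpha}=1>0$ the first step flips the sign to $g_1^{\alpha}=-\alpha<0$, and every subsequent step preserves this negative sign. When $\gamma=\beta\in(1,2)$ the multiplier is negative for $k=1,2$ and lies in $(0,1)$ for all $k\geq 3$; hence $g_0^{\beta}=1$, $g_1^{\beta}=-\beta$, $g_2^{\beta}=\beta(\beta-1)/2>0$, and from $k=3$ onward the iterates stay positive and strictly decrease in magnitude because the multiplier is a positive number strictly less than $1$, establishing $g_2^{\beta}>g_3^{\beta}>\cdots>0$.

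For the partial-sum inequalities I would use the generating-function identity in the form $\sum_{k=0}^{n} g_k^{\gamma}=-\sum_{k=n+1}^{\infty} g_k^{\gamma}$. In the $\alpha$ case every tail term $g_k^{\alpha}$ with $k\geq 1$ is strictly negative by the sign analysis above, so the tail is negative and the partial sum is therefore strictly positive for all $n_t\geq 1$. In the $\beta$ case the tail consists (for $n\geq 2$) entirely of the strictly positive terms $g_k^{\beta}$ with $k\geq 3$, so the partial sum is strictly negative; for $n=1$ the tail equals $-g_0^{\beta}-g_1^{\beta}=\beta-1>0$, again giving a negative partial sum $1-\beta$.

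The only genuinely delicate point is the passage to the limit $z\to 1^-$ in the binomial series for non-integer exponent $\gamma\in(0,2)$; the asymptotic $g_k^{\gamma}\sim \Gamma(-\gamma)^{-1} k^{-1-\gamma}$ guarantees the absolute convergence needed for Abel's theorem, but I would flag this as the one place where a reference to a standard source such as \cite{book_1} or \cite{MT1} is appropriate rather than reproducing the asymptotic analysis. Everything else is elementary manipulation of the single-term recurrence and bookkeeping of signs.
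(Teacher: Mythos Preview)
Your argument is correct. The sign analysis via the recurrence $g_k^{\gamma}=(1-\tfrac{\gamma+1}{k})g_{k-1}^{\gamma}$ is clean and the partial-sum inequalities follow exactly as you say from $\sum_{k=0}^{\infty} g_k^{\gamma}=0$ together with the established sign patterns on the tails. The one minor wording slip is in the $n=1$ case for $\beta$: what you compute as ``the tail equals $-g_0^{\beta}-g_1^{\beta}$'' is really $\sum_{k\geq 2}g_k^{\beta}=0-g_0^{\beta}-g_1^{\beta}=\beta-1$, but the conclusion is right.

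As for comparison with the paper: the paper does not actually prove this proposition. It is stated as a summary of well-known properties and the reader is referred to \cite[page 397]{book_1} and \cite{MT1,paper_18}. Your write-up therefore supplies what the paper deliberately omits, namely a short self-contained derivation from the recurrence and the binomial generating function. This is a genuinely different contribution rather than a different route to the same proof, and it has the advantage of making the dependence on $\alpha\in(0,1)$ and $\beta\in(1,2)$ completely explicit through the sign of the recurrence multiplier at $k=1,2,3$. The only place you still lean on an outside fact is the asymptotic $g_k^{\gamma}=O(k^{-1-\gamma})$ needed to justify evaluating the binomial series at $z=1$; citing one of the same references there, as you suggest, is appropriate and keeps the argument honest.
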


\section{Toeplitz Matrices and Circulant Preconditioners} \label{Section: Toeplitz and Circulant Matrices} 
\par In this section, we propose a multilevel circulant preconditioner, suitable for approximating multilevel Toeplitz matrices, and then examine the quality of such a preconditioner for the problem at hand, showing that the preconditioner is in fact a.c.s. for a scaled version of the coefficient matrix in \eqref{FDE constraint matrix}. 
\par Toeplitz and multilevel Toeplitz matrices appear often when (numerically) solving partial, integral, or fractional differential equations, problems in time series analysis, as well as in signal processing (see for example \cite{book_3,paper_6,paper_28,paper_27}, and the references therein).  An active area of research is that of solving a huge-scale systems of linear equations, $Ax = b$, where the matrix $A$ has some specific structure, such as Toeplitz, multilevel Toeplitz, or it can be written as a combination of Toeplitz and other structured matrices. There are two major approaches for solving such systems. One alternative is to solve them directly by exploiting the matrix structure (see for example \cite{paper_4,paper_56,paper_5,paper_55}). A more popular approach is to employ some iterative method to solve the system, assisted by an appropriately designed preconditioner, to ensure that the iterative method achieves fast convergence (as in \cite{paper_12,book_7,paper_9,paper_11,paper_15,paper_10,paper_3,paper_1,paper_14,paper_8,paper_7,paper_6,book_6,paper_13}).
An equally rich literature exists for preconditioning Toeplitz-like linear systems arising specifically from the discretization of fractional diffusion equations (see \cite{CD,paper_61,paper_58,DC,paper_57,paper_7,paper_6,paper_59,paper_60}, among others).
\par In this paper, we follow the simplest possible approach: that of approximating multilevel Toeplitz matrices using multilevel circulant preconditioners. To do so, we first have to derive a unilevel circulant approximation of an arbitrary unilevel Toeplitz matrix. Given a unilevel Toeplitz matrix $T_n \in \mathbb{R}^{n \times n}$, we employ the circulant approximation proposed for the first time in \cite{paper_3} (also called the T. Chan preconditioner for $T_n$). More specifically, we define the optimal circulant approximation of $T_n$, as the solution of the following optimization problem:
\begin{equation} \label{level 1 circulant optimization problem}
 C_1(T_n) = \min_{C_n \in \mathcal{C}_n} \| C_n-T_n\|_F, 
 \end{equation}
\noindent where $\mathcal{C}_n$ is the set of all $n \times n$ circulant matrices, and $\|\cdot\|_F$ the \emph{Frobenius norm}. It turns out that \eqref{level 1 circulant optimization problem} admits the following closed form solution: 
\[c_i = \frac{(n-i)\cdot t_i + i \cdot t_{-n + i}}{n},\ \ i \in \{0,\dotsc,n-1\}. \] 
\noindent Then, we can write $C_1(T_n) = F_n^* \Lambda_n F_n$, where $F_n$ is the discrete Fourier transform of size $n$ and $\Lambda_n$ is a diagonal matrix containing the eigenvalues of $C_1(T_n)$, which can be computed as $\Lambda_n = \textnormal{diag}(F_n c_1)$, where $c_1$ is the first column of $C_1(T_n)$.
\noindent Other unilevel circulant approximations are possible, such as those proposed in \cite{paper_15,paper_10, paper_13}, however, the T. Chan preconditioner seems (empirically) to behave better for the problem under consideration. \par We now focus on the discretized FDE given in (\ref{FDE constraint equation}). By multiplying this equation on both sides by $\psi = \min\{h_t^{\alpha},h_x^{\beta_1},h_x^{\beta_2}\}$, we have:
\[B_{\bm{n}}y_{\bm{n}} + \psi u_{\bm{n}} = \psi g_{\bm{n}},\]
\noindent where $y_{\bm{n}}$, $u_{\bm{n}}$, $g_{\bm{n}}$ represent the discretized variants of ${\rm y}$, ${\rm u}$, ${\rm g}$, $B_{\bm{n}} = \psi D_{\bm{n}}$, with $D_{\bm{n}}$ defined as in (\ref{FDE constraint matrix}), $h_t,\ h_x$ the time and spatial mesh-sizes, and $\bm{n} = [n_{x_1},n_{x_2},n_t]$. We observe that the matrix $D_{\bm{n}}$ (and hence $B_{\bm{n}}$) enjoys a 3-level Toeplitz structure. In particular, each block of $D_{\bm{n}}$ ($B_{\bm{n}}$) enjoys a quadrantally symmetric block Toeplitz structure (such matrices are analyzed for example in \cite{paper_9}). Given the matrix $B_{\bm{n}}$, we can define its T. Chan-based $3$-level circulant preconditioner as:
\begin{equation} \label{multilevel circulant preconditioner}
\begin{split}
C_3(B_{\bm{n}}) = &\ \psi C_1(\mathscr{C}^{\alpha}_{n_t}) \otimes I_{n_{x_1} \cdot n_{x_2}} - \psi I_{n_t} \otimes\big( C_1(\mathscr{L}^{\beta_1}_{n_{x_1}}) \otimes I_{n_{x_2}} +  I_{n_{x_1}}\otimes C_1(\mathscr{L}^{\beta_2}_{n_{x_2}}) \big) \\
=&\  (F_{n_{x_1}} \otimes F_{n_{x_2}} \otimes F_{n_t})^* \Lambda_{\bm{n}} (F_{n_{x_1}} \otimes F_{n_{x_2}} \otimes F_{n_t}),
\end{split}
\end{equation}
\noindent where $\Lambda_{\bm{n}}$ is the diagonal eigenvalue matrix of the preconditioner, computed as:
\[\Lambda_{\bm{n}} =\psi \Lambda_{\alpha} \otimes I_{n_{x_1}\cdot n_{x_2}} - \psi I_{n_t}\otimes \big(\Lambda_{\beta_1} \otimes I_{n_{x_2}} + I_{n_{x_1}} \otimes \Lambda_{\beta_2} \big),\]
\noindent with $\Lambda_{\alpha}$, $\Lambda_{\beta_1}$, $\Lambda_{\beta_2}$ being the diagonal matrices containing the eigenvalues of the T. Chan approximations of the matrices $\mathscr{C}^{\alpha}_{n_t}$, $\mathscr{L}^{\beta_1}_{n_{x_1}}$, and $\mathscr{L}^{\beta_2}_{n_{x_2}}$, respectively.
\par The preconditioner in \eqref{multilevel circulant preconditioner} can be computed efficiently in $O(N(\bm{n}) \log N(\bm{n}))$ operations, using the fast Fourier transform. The storage requirements are $O(N(\bm{n}))$ since we only need to store the eigenvalue matrix, that is $\Lambda_{\bm{n}}$. Clearly, the preconditioner in \eqref{multilevel circulant preconditioner} can be defined similarly for FDEs of arbitrary dimension, say $d$. Given a $d$-index $\bm{n}$, containing the level sizes of an arbitrary $d$-level Toeplitz $T_{\bm{n}}$ or circulant matrix $C_{\bm{n}}$, we summarize the computational and storage costs of various recursive linear algebra operations in Table \ref{Complexity table}.

\begin{table}[!ht]
\centering
\caption{Summary of computational and storage complexity\label{Complexity table}}
 \noindent   \begin{tabular}{rrrr}   
   \toprule  {\textbf{Structure}} &  {\textbf{Operation}} & {\textbf{Computations}} & {\textbf{Storage}}  \\ \midrule
    $d$-level circulant & $C_{\bm{n}} x$ & $O(N(\bm{n})\log N(\bm{n}))$ & $O(N(\bm{n}))$ \\ 
    $d$-level circulant & $C_{\bm{n}}^{-1}x$ & $O(N(\bm{n}) \log N(\bm{n}))$ & $O(N(\bm{n}))$ \\  
    $d$-level circulant & $C_{\bm{n}}^{(1)} C_{\bm{n}}^{(2)}$ & $O(N(\bm{n}))$ & $O(N(\bm{n}))$ \\ 
    $d$-level circulant & $C_{\bm{n}}^{(1)} + C_{\bm{n}}^{(2)} $ & $O(N(\bm{n}))$ & $O(N(\bm{n}))$ \\  
    $d$-level Toeplitz  & $T_{\bm{n}}x$ & $O(2^d N(\bm{n})\log N(\bm{n}))$& $O(2^d N(\bm{n}))$ \\ 
    $d$-level circulant & Construct $C_d(T_{\bm{n}})$ & $O(N(\bm{n})\log N(\bm{n}))$ & $O(N(\bm{n}))$ \\ \bottomrule
    \end{tabular}
\end{table}

\par Using the definition of the matrices used to construct matrix $B_{\bm{n}}$ (see \eqref{Caputo matrix} and \eqref{Riesz matrix}), we are now able to derive the generating function of this $3$-level Toeplitz matrix. To that end, let us define the following scalars: 
\begin{equation} \label{xi definition}
 \nu_1 = \frac{\psi}{h_x^{\beta_1}},\quad \nu_2 = \frac{\psi}{h_x^{\beta_2}},\quad \nu_3 = \frac{\psi}{h_t^{\alpha}},
\end{equation}
\noindent which are obviously bounded above by $1$, from the definition of $\psi$. Of course in order for these to be theoretically meaningful, we have to assume that $h_t^{\alpha} \propto h_x^{\beta_{1}} \propto h_x^{\beta_2}$.
\begin{lemma} \label{Lemma generating function of matrix B}
Let $\bm{n} = [n_{x_1}, n_{x_2},n_t]$ be a $3$-index and define the matrix $D_{\bm{n}}$ as in \eqref{FDE constraint matrix}. Then, the symbol generating the matrix-sequence $\{B_{\bm{n}}\}_n = \{\psi D_{\bm{n}} \}_n$, can be expressed as:
\begin{equation} \label{generating function of constraint matrix}
\begin{split}
 \phi_{\beta_1,\beta_2,\alpha}(\bm{\theta}) =  \nu_3 \sum_{k = 0}^{\infty} g_k^{\alpha} e^{i k \theta_3} - \qquad \qquad \qquad \qquad \qquad \qquad \\ \sum_{k = -1}^{\infty}\bigg(\frac{-\nu_1}{2 \cos(\frac{\beta_1 \pi}{2})}\big( g_{k+1}^{\beta_1} (e^{i k \theta_1} +  e^{-i k \theta_1})\big) + \frac{-\nu_2}{2 \cos(\frac{\beta_2 \pi}{2})}\big( g_{k+1}^{\beta_2}( e^{i k \theta_2} +  e^{-i k \theta_2})\big) \bigg),
\end{split}
\end{equation}
\noindent where $\bm{\theta} = [\theta_1,\theta_2,\theta_3]$, and $g_k^{c}$ is the fractional binomial coefficient, for some $c \in (0,1)\cup(1,2)$ and an arbitrary $k \geq 0$. Thus, we can write $B_{\bm{n}}  = T_{\bm{n}}(\phi_{\beta_1,\beta_2,\alpha})$.
\end{lemma}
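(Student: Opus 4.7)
The approach is to identify each building block of $D_{\bm{n}}$ as a unilevel Toeplitz matrix, compute its generating function directly by reading off the diagonal entries, and then assemble the multilevel symbol via the Kronecker-product rule before scaling by $\psi$.

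First, I identify the unilevel symbols. The Caputo matrix $\mathscr{C}^{\alpha}_{n_t}$ is lower-triangular Toeplitz whose $k$-th sub-diagonal entry ($k\geq 0$) equals $g_k^{\alpha}/h_t^{\alpha}$; reading these off as Fourier coefficients in Theorem \ref{theorem Toeplitz matrices} gives the generating function $c_\alpha(\theta)=\frac{1}{h_t^{\alpha}}\sum_{k=0}^{\infty}g_k^{\alpha}e^{ik\theta}$ (the series converging absolutely by the standard asymptotic $g_k^{\alpha}=O(k^{-\alpha-1})$). The shifted Grünwald–Letnikov matrix $\mathscr{L}^{\beta,l}_n$ has $k$-th diagonal entry $g_{k+1}^{\beta}/h_x^{\beta}$ for $k\geq -1$ and $0$ otherwise, hence it is Toeplitz with symbol $\ell_\beta^l(\theta)=\frac{1}{h_x^{\beta}}\sum_{k=-1}^{\infty}g_{k+1}^{\beta}e^{ik\theta}$. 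Since the transpose of a real unilevel Toeplitz matrix $T_n(f)$ equals $T_n(f(-\cdot))$, the Riesz matrix defined in \eqref{Riesz matrix} is Toeplitz with
$$\ell_\beta(\theta)=\frac{-1}{2\cos(\beta\pi/2)h_x^{\beta}}\sum_{k=-1}^{\infty}g_{k+1}^{\beta}\bigl(e^{ik\theta}+e^{-ik\theta}\bigr).$$

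Second, I lift these unilevel symbols to a three-level one. For unilevel Toeplitz matrices $T_{n_j}(f_j)$, the Kronecker product $T_{n_1}(f_1)\otimes T_{n_2}(f_2)\otimes T_{n_3}(f_3)$ has $((i_1,i_2,i_3),(j_1,j_2,j_3))$-entry equal to $\prod_j (f_j)_{i_j-j_j}$, which is precisely the Fourier coefficient of the tensor product $f_1(\theta_1)f_2(\theta_2)f_3(\theta_3)$ at the multi-index $(i_1-j_1,i_2-j_2,i_3-j_3)$. Combining this with the facts that $I_n=T_n(1)$ and that Toeplitz generation is linear in the symbol, and aligning the ordering of the coordinates of $\bm{\theta}$ with $\bm{n}=[n_{x_1},n_{x_2},n_t]$, the definition \eqref{FDE constraint matrix} yields immediately that $D_{\bm{n}}=T_{\bm{n}}\bigl(c_\alpha(\theta_3)-\ell_{\beta_1}(\theta_1)-\ell_{\beta_2}(\theta_2)\bigr)$.

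Third, a scalar multiple of a Toeplitz matrix is Toeplitz with the scaled symbol, so $B_{\bm{n}}=\psi D_{\bm{n}}=T_{\bm{n}}\bigl(\psi c_\alpha(\theta_3)-\psi\ell_{\beta_1}(\theta_1)-\psi\ell_{\beta_2}(\theta_2)\bigr)$. Substituting the definitions \eqref{xi definition}, the prefactor $\psi/h_t^{\alpha}$ in $\psi c_\alpha$ becomes $\nu_3$ while $\psi/h_x^{\beta_i}$ in $\psi\ell_{\beta_i}$ becomes $\nu_i$, and a routine rearrangement reproduces the stated expression \eqref{generating function of constraint matrix}.

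The main obstacle is the bookkeeping at the boundary of the diagonal index in the Riesz symbol: for $k\in\{-1,0,1\}$ the contributions from $\ell_\beta^l(\theta)$ and $\ell_\beta^l(-\theta)$ overlap and must be summed carefully to match the entries of the symmetrized matrix \eqref{Riesz matrix}. A secondary point is reconciling the order of Kronecker factors in \eqref{FDE constraint matrix} (where $\mathscr{C}^{\alpha}_{n_t}$ sits as the outermost block) with the convention assigning $\theta_3$ to the time direction in \eqref{generating function of constraint matrix}; once this alignment is fixed, the remaining manipulations are purely algebraic.
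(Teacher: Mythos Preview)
Your proposal is correct and follows precisely the route the paper indicates: the paper in fact omits the proof, stating only that it ``follows easily from the definition of the matrices within $B_{\bm{n}}$'' (i.e.\ \eqref{Caputo matrix} and \eqref{Riesz matrix}) together with Theorem~\ref{theorem Toeplitz matrices}, which is exactly what you carry out in detail. Your explicit identification of the unilevel symbols, the Kronecker-product lift to the multilevel symbol, and your remarks on the diagonal-index bookkeeping and the ordering convention are all apposite and fill in what the paper leaves to the reader.
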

\begin{proof}
\par We omit the proof, which follows easily from the definition of the matrices within $B_{\bm{n}}$, that is using the definition of $\mathscr{C}^{\alpha}_n$ in \eqref{Caputo matrix} and $\mathscr{L}^{\beta}_n$ in \eqref{Riesz matrix}. The reader is referred to \cite{paper_61,paper_6,paper_60}, among others, for derivations of similar results. The alternative representation of matrix $B_{\bm{n}}$ follows directly from Theorem \ref{theorem Toeplitz matrices}.
\end{proof}
\par To analyze the effectiveness of the proposed $3$-level circulant preconditioner for $B_{\bm{n}}$, we prove that the trigonometric polynomial generating function of matrix $B_{\bm{n}}$ belongs to the Wiener class (that is, it has absolutely summable coefficients). 
\begin{lemma} \label{lemma absolutely summable}
Assume that $\beta_1$ and $\beta_2$ are bounded away from $1$. Then, the generating function $\phi_{\beta_1,\beta_2,\alpha}(\bm{\theta})$ defined in \eqref{generating function of constraint matrix} belongs to the Wiener class, that is:
\[ \phi_{\beta_1,\beta_2,\alpha}(\bm{\theta}) = \sum_{\bm{k} \in \mathbb{Z}^3} \phi_{\bm{k}} e^{i \langle\bm{k}, \bm{\theta} \rangle},\ \textnormal{such that}\ \sum_{\bm{k} \in \mathbb{Z}^3}|\phi_{\bm{k}}| < \infty.\]
\end{lemma}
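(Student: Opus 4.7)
The plan is to observe that $\phi_{\beta_1,\beta_2,\alpha}$ decomposes as a sum of three one-variable functions (the Caputo part depending only on $\theta_3$ and the two Riesz parts depending only on $\theta_1$ and $\theta_2$ respectively), so its 3D Fourier expansion is supported on the union of the three coordinate axes in $\mathbb{Z}^3$. The absolute summability on $\mathbb{Z}^3$ thus reduces to three one-dimensional $\ell^1$ statements. Concretely, reading off coefficients from \eqref{generating function of constraint matrix} and reindexing $k \mapsto k+1$ in the Riesz sums, I would bound
$$ \sum_{\bm{k}\in\mathbb{Z}^3}|\phi_{\bm{k}}| \;\leq\; \nu_3 \sum_{k=0}^{\infty}|g_k^{\alpha}| \;+\; \frac{\nu_1}{|\cos(\beta_1\pi/2)|} \sum_{k=0}^{\infty}|g_k^{\beta_1}| \;+\; \frac{\nu_2}{|\cos(\beta_2\pi/2)|} \sum_{k=0}^{\infty}|g_k^{\beta_2}|, $$
noting that the prefactors $\nu_j$ are bounded by $1$ by \eqref{xi definition}, and that the hypothesis that $\beta_1,\beta_2$ stay away from $1$ keeps $|\cos(\beta_i\pi/2)|$ uniformly positive (the function is continuous and nonvanishing on any compact subinterval of $(1,2)$). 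It therefore suffices to control the three scalar series separately.

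For the Caputo coefficients with $0 < \alpha < 1$, I would invoke Proposition \ref{gamma prop}, which provides: $g_0^{\alpha} > 0$, $g_k^{\alpha} < 0$ for $k \geq 1$, and the partial sums $S_N = \sum_{k=0}^N g_k^{\alpha}$ stay strictly positive. Combining these, $\sum_{k=1}^N |g_k^{\alpha}| = g_0^{\alpha} - S_N < g_0^{\alpha}$ uniformly in $N$, which gives $\sum_{k=0}^{\infty} |g_k^{\alpha}| \leq 2 g_0^{\alpha} < \infty$.

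For the Riesz coefficients with $1 < \beta_i < 2$, the argument is essentially the same with signs swapped. Proposition \ref{gamma prop} gives $g_0^{\beta_i}=1$, $g_1^{\beta_i}=-\beta_i$, $g_k^{\beta_i}>0$ for $k\geq 2$, and partial sums $\sum_{k=0}^N g_k^{\beta_i} < 0$ for $N\geq 1$. Hence $\sum_{k=2}^N g_k^{\beta_i} < \beta_i - 1$ uniformly, giving $\sum_{k=0}^{\infty}|g_k^{\beta_i}| \leq 2\beta_i < \infty$. Combining the three estimates finishes the proof. There is no real obstacle here: the content of the argument is already packaged in Proposition \ref{gamma prop}, and the only things to watch are (i) the bookkeeping of the shifted index $g_{k+1}^{\beta_i}$ in \eqref{generating function of constraint matrix} (so that the $k=-1$ terms correctly provide the constant contribution $g_0^{\beta_i}$), and (ii) the use of the hypothesis $\beta_i$ bounded away from $1$ precisely at the step where one needs $|\cos(\beta_i\pi/2)|$ bounded below away from zero.
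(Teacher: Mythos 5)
Your argument is correct and is essentially the paper's own proof (which is only sketched there): match the Fourier coefficients, apply the triangle inequality with $\nu_j\le 1$ and $|\cos(\beta_i\pi/2)|$ bounded below, and use Proposition \ref{gamma prop} to get exactly the bounds $\sum_k|g_k^{\alpha}|\le 2g_0^{\alpha}$ and $\sum_k|g_k^{\beta_i}|\le 2\beta_i$, yielding the same final estimate $2\nu_3 g_0^{\alpha}+\tfrac{2\nu_1\beta_1}{|\cos(\beta_1\pi/2)|}+\tfrac{2\nu_2\beta_2}{|\cos(\beta_2\pi/2)|}$. One tiny bookkeeping slip that does not affect the bound: in \eqref{generating function of constraint matrix} the $k=-1$ term contributes $g_0^{\beta_i}$ at frequencies $\pm 1$ (the constant term comes from $k=0$, namely $2g_1^{\beta_i}$), but since you sum absolute values over all $k$ the estimate is unchanged.
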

\begin{proof}
\par For brevity of presentation, we provide an outline of the proof. Firstly, one has to transform \eqref{generating function of constraint matrix} to the form $\phi_{\beta_1,\beta_2,\alpha}(\bm{\theta}) = \sum_{\bm{k} \in \mathbb{Z}^3} \phi_{\bm{k}} e^{i \langle\bm{k}, \bm{\theta} \rangle}$, by matching the coefficients of the associated trigonometric polynomials. By taking the  absolute values of the matched coefficients, applying the triangle inequality, and using the properties of the fractional binomial coefficients, summarized in Proposition \ref{gamma prop}, we obtain:
\begin{equation*}
\begin{split}
\sum_{\bm{k} \in \mathbb{Z}^3}|\phi_{\bm{k}}|  \leq\ & \nu_3 \sum_{k = 0}^{\infty}|g_k^{\alpha}| + \sum_{k = -1}^{\infty} \bigg(\frac{\nu_1}{|\cos(\frac{\beta_1 \pi}{2})|} |g_{k+1}^{\beta_1}| + \frac{\nu_2}{|\cos(\frac{\beta_2 \pi}{2})|}|g_{k+1}^{\beta_2}|\bigg)\\  \leq \ &(2 \nu_3)\cdot g_0^{\alpha} + \bigg(\frac{2  \nu_1}{|\cos(\frac{\beta_1 \pi}{2})|}\bigg) \beta_1 + \bigg(\frac{2 \nu_2}{|\cos(\frac{\beta_2 \pi}{2})|}\bigg) \beta_2.
\end{split}
\end{equation*}
\noindent The latter completes the proof.
\end{proof}
\noindent Using the results presented in \cite{paper_9,paper_8,paper_7}, we can derive the following Theorem, which in fact shows that the $3$-level circulant approximation of matrix $B_{\bm{n}}$ defined in \eqref{multilevel circulant preconditioner} is an a.c.s. for it.
\begin{theorem} \label{Theorem preconditioner error}
\noindent Let $B_{\bm{n}} = \psi D_{\bm{n}}$ where $\bm{n} = [n_{x_1},n_{x_2},n_t]$, and $C_3(B_{\bm{n}})$ its circulant approximation defined in \eqref{multilevel circulant preconditioner}. For every $\epsilon(m) > 0$, such that $\epsilon(m) \rightarrow 0$ as $m \rightarrow \infty$, there exist constants $N_{x_1},N_{x_2},N_t$, such that for all $n_{x_1} > N_{x_1}$, $n_{x_2} > N_{x_2}$, $n_t > N_t$:
\[B_{\bm{n}} - C_3(B_{\bm{n}}) = U_{\bm{n},\epsilon(m)} + V_{\bm{n},\epsilon(m)},\]
\noindent where
\[\textnormal{rank}(U_{\bm{n},\epsilon(m)}) = O(n_{x_2}  n_{x_1} + n_{x_1} n_t + n_t n_{x_2}),\quad \|V_{\bm{n},\epsilon(m)}\|_2 < \epsilon(m).\]
\end{theorem}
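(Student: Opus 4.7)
The plan is to exploit the tensor-product structure of $B_{\bm{n}} - C_3(B_{\bm{n}})$ to reduce the three-level claim to three single-level Toeplitz-versus-circulant-approximation statements, and then prove each of these via a Wiener-class truncation argument in the spirit of Lemma \ref{lemma absolutely summable}.

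First I would write the error by subtracting \eqref{multilevel circulant preconditioner} from $B_{\bm{n}}=\psi D_{\bm{n}}$ with $D_{\bm{n}}$ as in \eqref{FDE constraint matrix}:
\begin{equation*}
B_{\bm{n}}-C_3(B_{\bm{n}})=\psi\,\Delta^{(t)}_{n_t}\otimes I_{n_{x_1}n_{x_2}}-\psi\,I_{n_t}\otimes\Delta^{(1)}_{n_{x_1}}\otimes I_{n_{x_2}}-\psi\,I_{n_t}\otimes I_{n_{x_1}}\otimes\Delta^{(2)}_{n_{x_2}},
\end{equation*}
with $\Delta^{(t)}_{n_t}=\mathscr{C}^{\alpha}_{n_t}-C_1(\mathscr{C}^{\alpha}_{n_t})$ and $\Delta^{(i)}_{n_{x_i}}=\mathscr{L}^{\beta_i}_{n_{x_i}}-C_1(\mathscr{L}^{\beta_i}_{n_{x_i}})$ for $i=1,2$. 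Since $\|A\otimes I_q\|_2=\|A\|_2$ and $\mathrm{rank}(A\otimes I_q)=q\cdot\mathrm{rank}(A)$, the theorem will follow once I show that each of $\Delta^{(t)}_{n_t},\Delta^{(1)}_{n_{x_1}},\Delta^{(2)}_{n_{x_2}}$ admits, for every sufficiently large dimension, a splitting $R^{(m)}+E^{(m)}$ with $\mathrm{rank}(R^{(m)})=O(m)$ independent of the matrix size and $\|E^{(m)}\|_2\to 0$ as $m\to\infty$. Tensoring then produces a rank bound of $O(m)$ times the appropriate identity-dimension in each summand, giving the announced $O(n_{x_1}n_{x_2}+n_{x_1}n_t+n_{x_2}n_t)$, while the three small-norm pieces simply sum.

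For the single-level step, fix one of the matrices, say $T_n(f)$, where $f$ is one of the one-dimensional factors extracted from $\phi_{\beta_1,\beta_2,\alpha}$ in \eqref{generating function of constraint matrix}. The argument proving Lemma \ref{lemma absolutely summable} (applied dimension by dimension, using Proposition \ref{gamma prop}) shows each such $f$ lies in the Wiener class, so $f(\theta)=\sum_{k\in\mathbb{Z}}f_k e^{ik\theta}$ with $\sum_k|f_k|<\infty$. Splitting $f=f^{(m)}+r^{(m)}$ where $f^{(m)}(\theta)=\sum_{|k|\leq m}f_k e^{ik\theta}$, we have $\|r^{(m)}\|_\infty\leq\sum_{|k|>m}|f_k|\to 0$, and linearity of $T_n(\cdot)$ and $C_1(\cdot)$ yields
\begin{equation*}
T_n(f)-C_1(T_n(f))=\bigl[T_n(f^{(m)})-C_1(T_n(f^{(m)}))\bigr]+\bigl[T_n(r^{(m)})-C_1(T_n(r^{(m)}))\bigr].
\end{equation*}
The second bracket has spectral norm bounded by $2\|r^{(m)}\|_\infty$, contributing to $E^{(m)}$. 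For the first bracket, using the explicit entries $c_j=\bigl((n-j)t_j+jt_{-n+j}\bigr)/n$ of the T.~Chan circulant together with the bandedness of $T_n(f^{(m)})$, a direct case analysis (comparing $(T_n(f^{(m)}))_{jk}=t_{j-k}$ against the wrap-around circulant entry) shows that the first bracket equals a matrix supported only in the top-right and bottom-left $m\times m$ corners, which has rank at most $2m$, plus a banded perturbation of bandwidth $m$ with entries bounded by $(m/n)\max_{|k|\leq m}|f_k|$, hence of spectral norm $O(m^2/n)$, which can be absorbed into $E^{(m)}$ once $n$ is chosen large enough.

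Assembling the three single-level decompositions, defining $U_{\bm{n},\epsilon(m)}$ as the sum of the three tensored rank pieces and $V_{\bm{n},\epsilon(m)}$ as the sum of the three tensored small-norm pieces, I would conclude by choosing $m$ large enough that the Wiener tails $\sum_{|k|>m}|f_k|$ for each of the three generating functions lie below $\epsilon(m)/(6\psi)$, and then $N_{x_1},N_{x_2},N_t$ large enough that each $O(m^2/n)$ contribution is also below $\epsilon(m)/(6\psi)$. The main obstacle, and the only nonroutine calculation, is the single-level corner-versus-interior bookkeeping that shows $T_n(f^{(m)})-C_1(T_n(f^{(m)}))$ decomposes as a rank-$2m$ corner matrix plus a uniformly small banded perturbation; this is essentially standard (cf.\ \cite{paper_9,paper_8,paper_7}), but care is required so that the dependence of $N_{x_1},N_{x_2},N_t$ on $m$ through the $m^2/n$ decay remains compatible with the prescribed tolerance $\epsilon(m)$.
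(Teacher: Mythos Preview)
Your proposal is correct and follows essentially the same approach as the paper, which defers the argument to \cite[Theorems~3.2 and~4.1]{paper_7}: exploit the tensor structure to reduce to three single-level Toeplitz-minus-circulant differences, then handle each by Wiener-class truncation into a low-rank corner piece and a small-norm remainder. The only adjustment to note is that the paper works with the scaled generating functions (the $\nu_i$-weighted summands of $\phi_{\beta_1,\beta_2,\alpha}$ in Lemma~\ref{Lemma generating function of matrix B}), so the $\psi$ should be absorbed into the one-dimensional symbols rather than appearing as a separate factor in your tail bound; with that bookkeeping fix your argument goes through verbatim.
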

\begin{proof}
\noindent The proof is omitted since it follows exactly the developments in \cite[Theorem 3.2 and Theorem 4.1]{paper_7}, with the only difference being that the Strang unilevel circulant approximation is used there (for example see \cite{paper_15}) instead of the T. Chan approximation. Notice that the authors in \cite{paper_7}  assume invertibility  of $C_3(B_{\bm{n}})$, using which they prove a weak clustering result. Hence, to prove the result stated here, one needs to follow only part of the proof outlined in \cite[Theorem 3.2]{paper_7}. 
\end{proof}

\begin{remark} Following \textnormal{\cite[Remark 4.1]{paper_7}}, assuming that $d = O(1)$, we can recursively extend the result of Theorem \textnormal{\ref{Theorem preconditioner error}} to the $d$-level case, using induction. In other words, the developments discussed in this paper can be extended trivially to higher dimensional FDEs. As expected, the circulant approximation becomes weaker as the dimension of the associated FDE is increased. In particular, the result in \textnormal{\cite{paper_38}} shows that in the general case, any multilevel circulant preconditioner for multilevel Toeplitz matrices is not a superlinear preconditioner. Superlinear preconditioners are important, in that they allow preconditioned Conjugate Gradient-like methods to converge in a constant number of iterations, independently of the size of the problem. In general, one could not hope of achieving a strong clustering when preconditioning multilevel Toeplitz matrices using multilevel circulant preconditioners. In light of that, it comes as no surprise that a preconditioner like the one in \textnormal{\eqref{multilevel circulant preconditioner}} does not asymptotically capture all of the eigenvalues of the approximated multilevel Toeplitz matrix.
\end{remark}
\begin{remark}
Let us now notice that a scaled identity approximation for the discretized objective Hessian matrix in \textnormal{\eqref{discretized Objective function}} yields (trivially) a GLT sequence. Similarly, the approximation $C_3(B_{\bm{n}})$ in \textnormal{\eqref{multilevel circulant preconditioner}} for the matrix $B_{\bm{n}} = \psi D_{\bm{n}}$, where $D_{\bm{n}}$ is defined in \textnormal{\eqref{FDE constraint matrix}}, is also a GLT sequence (since it can be considered as a multilevel Toeplitz matrix). In view of the previous, as well as Theorem \textnormal{\ref{theorem GLT properties}} (condition (6.)), we can see that the proposed approximations for the matrices associated to the discretized version of \textnormal{\eqref{FDEOpt}} satisfy the conditions of Proposition \textnormal{\ref{Prop. ACS matrices}}. Hence, we are able to invoke Theorem  \textnormal{\ref{theorem first schur complement preconditioner}} for the preconditioner in \textnormal{\eqref{first schur complement preconditioner}}, which is constructed by using the aforementioned multilevel circulant approximations. Thus, we are able to show that the resulting preconditioned ADMM system matrix, corresponding to the normal equations in \eqref{generic ADMM normal equations}, is weakly clustered at $1$. Furthermore, by the same Theorem, we expect convergence of PCG in a number of iterations independent of the grid-size.
\end{remark}

\section{Implementation Details and Numerical Results} \label{Implementation and Numerical Results}
\par In this section we discuss specific implementation details and present the numerical results obtained by running the implementation of the proposed method over a variety of settings of the FDE optimization problem.
\subsection{Test problem and implementation details}
\par We assess the performance of the proposed method on the following test problem. We attempt to numerically solve problem \eqref{FDEOpt}. The state and the control are defined on the domain $\Omega \times (0,T) = (0,1)^2 \times (0,1)$. For some $n \in \mathbb{N}$, the discretized grid contains $n\times n \times  n $ uniform points, in space and time (i.e. we make use of the $3$-index $\bm{n} = [n, n,n]$), which yields:
\[x_1^i = ih_x,\ x_2^j = jh_x,\ t^k = kh_t,\ i,j = 1,\ldots, n,\ k = 1,\ldots,n,\ h_x = \frac{1}{n + 1},\ h_t = h_x.\] 
\noindent It is worth mentioning that the choice of the number of discretization points in time should depend on the value of the fractional derivative orders. In particular, in the theory we had to assume that $h_t^{\alpha} \propto h_x^{\beta_1} \propto h_x^{\beta_2}$. Of course, this could be difficult to satisfy for certain values of $\alpha,\ \beta_1$, and $\beta_2$. In terms of discretization error, $n_t = n$ suffices, as we employ first-order numerical schemes for the space and time fractional derivatives. In what follows, we choose to use $n_t = n$ throughout all the experiments, noting that for very large values of $n$, this should be adjusted to take into consideration the values of the fractional derivative orders. Such an increase in the number of discretization points in time, could potentially be tackled by the use of higher-order numerical methods for the space fractional derivatives (see \cite{paper_64} and the references therein for higher-order approximations for the Riemann--Liouville and Riesz fractional derivatives). This is omitted for a future study.
\par As a desired state function, we set $\bar{{\rm y}}(x_1,x_2,t) = 10 \cos(10x_1)\sin(x_1 x_2)(1-e^{-5t}),$  as in \cite[Section 5.1]{paper_2}, with homogeneous boundary and initial conditions. Throughout this section, we employ the convention that $n_{x_1} = n_{x_2}= n_t$, and we only present the overall size of the discretized state vector, that is $N(\bm{n}) = n_{x_1} \cdot n_{x_2} \cdot n_t = n_{x_1}^3$. As an indicator of convergence of the numerical method, we apply the trapezoidal rule to roughly approximate the discrepancy between the solution for the state and the desired state on the discrete level, i.e.:
\[ \mathcal{E}_{L^2}(y-\bar{y}) \approx \|\mathrm{y} - \bar{\mathrm{y}}\|_{L^2}.\]
\noindent We should note that the previous measure approximates the misfit between the state and the desired state of the continuous problem, and hence it is not expected to converge to zero. Due to the Dirichlet boundary conditions, there is a mismatch between $y$ and $\bar{y}$ on the boundary. Hence a refinement in the grid size is expected to result in slight increase in the approximate discrepancy measure.
\par We implement a standard 2-Block ADMM for solving problems of the form of \eqref{Generic ADMM Full problem}. The implementation follows exactly the developments in Section \ref{Section A structure preserving method}. We solve system \eqref{generic ADMM normal equations} using the MATLAB function \texttt{pcg}. We note that while various potential acceleration strategies for ADMMs have been studied in the literature (see for example \cite{paper_26,paper_29}), the focus of the paper is to illustrate the viability of the proposed approach, and hence the simplest possible ADMM scheme is adopted. The step-size of ADMM is chosen to be close to the maximum allowed one in all computations, that is $\rho = 1.618$. The termination criteria of the ADMM are summarized as follows:
\[\big(\|B_{\bm{n}}y^j_{\bm{n}} + \psi(u^j_{\bm{n}}-g_{\bm{n}})\|_{\infty} \leq 10^{-4} \big) \wedge \big(\|y^j_{\bm{n}}-z^j_{y_{\bm{n}}}\|_{\infty} \leq 10^{-4}\big) \wedge \big(\|u^j_{\bm{n}}-z^j_{u_{\bm{n}}}\|_{\infty} \leq 10^{-4}\big).\]
\noindent In order to avoid unnecessary computations, we do not require a specific tolerance for the dual infeasibility. Instead, we report the dual infeasibility at the accepted optimal point. The Krylov solver tolerance is set dynamically, based on the accuracy attained at the respective ADMM iteration. In particular, the required tolerance for the Krylov solver is set to: 
\[\textnormal{In. Tol.} = 0.05 \cdot \max\big\{\min\{\|B_{\bm{n}}y^j_{\bm{n}} + \psi(u^j_{\bm{n}}-g_{\bm{n}})\|_{\infty},\|y^j_{\bm{n}}-z^j_{y_{\bm{n}}}\|_{\infty},\|u^j_{\bm{n}}-z^j_{u_{\bm{n}}}\|_{\infty}\},10^{-4}\big\},\]
\noindent at every iteration $j$. Hence, we present the average number of inner iterations in the results to follow. Furthermore, we employ the convention that the discretized restricting functions are of the form $y_{b_{\bm{n}}} = -y_{a_{\bm{n}}} = c \cdot  e_{\bm{n}}$ (or $u_{b_{\bm{n}}} = -u_{a_{\bm{n}}} = c \cdot e_{\bm{n}}$), where $e_{\bm{n}}$ is the $N(\bm{n})$-dimensional vector of ones and $c > 0$. Thus, we present only the value of the entries of $y_{a_{\bm{n}}}$ ($u_{a_{\bm{n}}}$, respectively). 
\par As we discussed earlier, the FDE constraints were scaled by the constant $\psi$, since this was required from the theory (see Theorem \ref{Theorem preconditioner error}). By doing this, we ensure that the elements of the matrix $B_{\bm{n}}$ are of order 1 (assuming that $h_t^{\alpha} \propto h_{x_i}^{\beta_i},$ for $i = 1,2$). As a result, the discretized control in the FDE constraints is multiplied by $\psi$. In ADMM such a scaling translates to a scaled step of the dual variables corresponding to the FDE constraints. In order to improve the balance of the algorithm, we multiply by $\psi$ the constraints linking $u_{\bm{n}}$ with its copy variables $z_{u_{\bm{n}}}$, thus scaling all the dual multipliers corresponding to these constraints.
\par The penalty parameter of ADMM, $\delta$, is chosen from a pool of five values which deliver reasonably good behavior of the method. More specifically, for the experiments to follow we choose $\delta \in \{0.1,0.4,2,10,100\}$. We note here that one could tune this parameter for each problem instance and obtain significantly better results. However, as this is not practical, we restrict ourselves to a small set of possible values. 
\par The experiments were conducted on a PC with a 2.2 GHz Intel (hexa-) core i7 processor, run under the Windows 10 operating system. The code is written in MATLAB R2019a. 

\subsection{Numerical Results}
\par We distinguish three types of problems:
\begin{itemize}
\item Problems with box constraints on the state $\rm{y}$,
\item problems with box constraints on the control $\rm{u}$, and 
\item problems with box constraints on both variables.
\end{itemize}
\noindent As expected and verified in practice, the third type of problem is the most difficult one. Hence, we will focus our attention on problems with box constraints on both variables, while presenting a few experiments on problems of the other two types.
\paragraph{\textbf{Box constraints on the state} $\bm{{\rm y}}$}
\noindent Let us briefly focus on the case where the state variable is required to stay in a box, while the control is free, that is ${\rm y_a} \leq {\rm y} \leq {\rm y_b},\ -\infty \leq {\rm u} \leq \infty$. Using similar arguments as in \cite{paper_2,DC}, we can see that an optimal solution in this case is guaranteed to exist. We run the method for different inequality bounds on the state ${\rm y}$. The results are summarized in Table \ref{Experiment Full: Constraints on state}. All fixed parameters are provided at the title of the respective Table.
\begin{table}[!ht]
\centering
\caption{Inequalities on the state: Varying restriction bounds (with $N = 50^3$, $\beta_x = \beta_y = 1.3$,  $\alpha = 0.7$, $\gamma = 10^{-4}$, $\delta = 0.1$).\label{Experiment Full: Constraints on state}}
\scalebox{0.8}{
\begin{tabular}{rrrrrrr}
     \toprule
    \multirow{2}{*}{$\bm{y_a}$}      & \multirow{2}{*}{$\bm{\mathcal{E}_{L^2}(y-\bar{y})}$} & \multirow{2}{*}{\textbf{Dual Inf.}}   & \multicolumn{2}{c}{\textbf{Iterations}}      & \multirow{2}{*}{\textbf{Time (s)}}  \\   \cline{4-5}
 & & & {PCG} & {ADMM} & \\ \midrule

  $-7$ & 5.60 $\times 10^{-1}$(*)\tablefootnote{(*) means that the solution coincides with the equality constrained solution; all the variables lie strictly within the restriction bounds.} &  2.13 $\times 10^{-3}$  & 9 & 75 & 142.31 \\
      $-5$ &5.80 $\times 10^{-1}$&  3.14 $\times 10^{-3}$ & 10 & 105 & 206.73 \\
    $-3$ & 7.88 $\times 10^{-1}$&  4.11 $\times 10^{-3}$ & 10 & 100 & 193.77 \\
  $-1$ & 1.38 $\times 10^{0}$&  8.74 $\times 10^{-4}$ & 10 & 86 & 173.49 \\ \bottomrule
\end{tabular}}
\end{table}
\paragraph{\textbf{Box constraints on the control} $\bm{{\rm u}}$}
\noindent We now focus on the case with $-\infty \leq {\rm y} \leq \infty,\ {\rm u_a} \leq {\rm u} \leq { \rm u_b}$. Again, it is straightforward to show that such a problem admits an optimal solution (see \cite{DC}).  We run the method for different inequality bounds on the control ${\rm u}$. The results are summarized in Table \ref{Experiment Full: Constraints on control} (including all the values of the parameters used to perform the experiment).
\begin{table}[!ht]
\centering
\caption{Inequalities on the control: Varying restriction bounds (with $N = 50^3$, $\beta_x = \beta_y = 1.3$, $\alpha = 0.7$, $\gamma = 10^{-4}$, $\delta = 0.4$).\label{Experiment Full: Constraints on control}}
\scalebox{0.8}{
\begin{tabular}{rrrrrrr}
      \toprule   
    \multirow{2}{*}{$\bm{u_a}$}    & \multirow{2}{*}{$\bm{\mathcal{E}_{L^2}(y-\bar{y})}$}&  \multirow{2}{*}{\textbf{Dual Inf.}}    & {\textbf{Iterations}}      &  & \multirow{2}{*}{\textbf{Time (s)}}  \\   \cline{4-5}
 & & & {PCG} & {ADMM} & \\ \midrule
 
  $-400$ & 5.60 $\times 10^{-1}$(*) &  8.43 $\times 10^{-4}$  & 16 & 30 & 84.46 \\
    $-300$ & 5.65 $\times 10^{-1}$ &  8.79 $\times 10^{-4}$  & 19 & 22 & 72.77 \\

  $-200$ & 6.25 $\times 10^{-1}$&  4.39 $\times 10^{-4}$ & 17 & 28 & 85.55 \\
  $-100$ & 8.90 $\times 10^{-1}$&  1.49 $\times 10^{-4}$ & 18 & 65 & 205.69 \\ \bottomrule
\end{tabular}}
\end{table}
\paragraph{\textbf{Box constraints on both variables}}
\noindent Let us now consider the case where ${\rm y_a} \leq {\rm y} \leq {\rm y_b},\ {\rm u_a} \leq {\rm u} \leq {\rm u_b}$. In general, in this case one is not able to conclude that the problem admits an optimal solution. Thus, we run the method on instances for which a solution is known to exist.
\par First, we present the runs of the method for different inequality bounds in Table \ref{Experiment Full: Constraints everywhere varying bounds}. Next, we present the runs of the method for varying grid size in Table \ref{Experiment Full: Constraints everywhere grid size}. As one can observe in Table \ref{Experiment Full: Constraints everywhere grid size}, the grid size does not affect the average number of inner PCG iterations. This comes in line with our observations in Section \ref{Section A structure preserving method}. Nevertheless, it is expected that ADMM requires more iterations as the size of the problem increases. Furthermore, we can observe the first-order convergence of the numerical method, as $n$ is increased. 
\begin{table}[!ht]
\centering
\caption{Inequalities on both variables: Varying restriction bounds (with $\beta_x = \beta_y = 1.3$, $\alpha = 0.7$, $\gamma = 10^{-4}$, $\delta = 0.4$, $N = 50^3$).\label{Experiment Full: Constraints everywhere varying bounds}}
\scalebox{0.8}{
\begin{tabular}{rrrrrrrr}
      \toprule
      \multirow{2}{*}{$\bm{y_a}$} &    \multirow{2}{*}{$\bm{u_a}$}  & \multirow{2}{*}{$\bm{\mathcal{E}_{L^2}(y-\bar{y})}$}& \multirow{2}{*}{\textbf{Dual Inf.}}    & {\textbf{Iter.}}      &  & \multirow{2}{*}{\textbf{Time (s)}}  \\   \cline{5-6}
& & & & {PCG} & {ADMM} & \\ 
 \hline
  $-7$ &$-400$ & 5.60 $\times 10^{-1}$(*)& 4.88 $\times 10^{-3}$   & 10 & 36 & 74.20 \\
  $-7$  & $-200$ & 5.94 $\times 10^{-1}$&  2.35 $\times 10^{-3}$ & 11 & 38 & 80.14 \\
  $-4$ & $-350$ & 6.45 $\times 10^{-1}$&  1.99 $\times 10^{-4}$ & 18 & 126 & 412.66 \\
 $-1$  &$-400$ & 1.38 $\times 10^{0}$&  2.56 $\times 10^{-4}$ & 19 & 109 & 377.86 \\
 \bottomrule
\end{tabular}}
\end{table}

\begin{table}[!ht]
\centering
\caption{Inequalities on both variables: Varying grid size (with $\beta_x = \beta_y = 1.3$, $\alpha = 0.7$, $\gamma = 10^{-4}$, $y_a = -4$, $u_a = -350$).\label{Experiment Full: Constraints everywhere grid size}}
\scalebox{0.8}{
\begin{tabular}{rrrrrrrr}
      \toprule
       \multirow{2}{*}{$\bm{N}$} & \multirow{2}{*}{$\bm{\delta}$}  & \multirow{2}{*}{$\bm{\mathcal{E}_{L^2}(y-\bar{y})}$}& \multirow{2}{*}{\textbf{Dual Inf.}}    & {\textbf{Iter.}}      &  & \multirow{2}{*}{\textbf{Time (s)}}  \\   \cline{5-6}
& & & & {PCG} & {ADMM} & \\ 
 \hline
 $8^3$ & $2$ &3.87 $\times 10^{-1}$& 5.23 $\times 10^{-4}$   & 12 & 86 & 1.89 \\
  $16^3$ & $2$ &5.02 $\times 10^{-1}$&  8.68 $\times 10^{-5}$ & 13 & 58 & 6.06 \\
 $32^3$ & $0.4$ &6.09 $\times 10^{-1}$ &  2.94 $\times 10^{-4}$ & 16 & 62 & 75.04 \\
 $50^3$ & $0.4$ &6.45 $\times 10^{-1}$&  1.99 $\times 10^{-4}$ & 18 & 126 & 412.66 \\
  $64^3$ & $0.1$ &6.58 $\times 10^{-1}$&  3.34 $\times 10^{-3}$ & 17 & 97 & 987.12 \\
 $80^3$ & $0.1$ &6.65 $\times 10^{-1}$&  4.31 $\times 10^{-3}$ & 17 & 102 & 1,135.83 \\
 $100^3$ & $0.1$ &6.70 $\times 10^{-1}$&  4.91 $\times 10^{-3}$ & 17 & 119 & 2,436.17\\
 $128^3$ & $0.1$ &6.73 $\times 10^{-1}$&  3.49 $\times 10^{-3}$ & 17 & 169 & 9,077.08 \\

 \bottomrule
\end{tabular}}
\end{table}

\par Subsequently, we run the method for various values of the fractional derivative orders. The results are summarized in Table \ref{Experiment Full: Constraints everywhere fractional order}. As one can observe, the constraint matrix becomes ill-conditioned when $\beta$ is close to 1, due to the scaling factor in the definition of the Riesz derivative $\big(\textnormal{that is, }\frac{-1}{2 \cos(\frac{\beta \pi}{2})}\big)$. In turn, this results in an increase of the PCG iterations in the case where $\beta = 1.1$.  
\begin{table}[!ht]
\centering
\caption{Inequalities on both variables: Varying fractional derivative orders (with $N = 32^4$, $y_a = -4$, $u_a = -350$, $\gamma = 10^{-4}$).\label{Experiment Full: Constraints everywhere fractional order}}
\scalebox{0.8}{
\begin{tabular}{rrrrrrrrr}
      \toprule
      \multirow{2}{*}{$\bm{\alpha}$} &    \multirow{2}{*}{$\bm{\beta}$} & \multirow{2}{*}{$\bm{\delta}$} & \multirow{2}{*}{$\bm{\mathcal{E}_{L^2}(y-\bar{y})}$}& \multirow{2}{*}{\textbf{Dual Inf.}}    & {\textbf{Iter.}}      &  & \multirow{2}{*}{\textbf{Time (s)}}  \\   \cline{6-7}
& & & & & {PCG} & {ADMM} & \\ \midrule
  $0.1$ &$1.3$ & 0.4 & 6.46 $\times 10^{-1}$& 1.60 $\times 10^{-4}$   & 17 & 126 & 380.75 \\
  $0.3$  & $1.3$ & 0.4 & 6.46 $\times 10^{-1}$& 2.56 $\times 10^{-4}$ & 17 & 126 & 385.96\\
  $0.5$ & $1.3$ & 0.4 & 5.12 $\times 10^{-1}$ &  2.83 $\times 10^{-4}$ & 18 & 126 & 408.47 \\
  $0.9 $ & $1.3$ & 0.4 & 6.44 $\times 10^{-1}$&  3.10 $\times 10^{-4}$ & 19 & 125 & 419.46 \\
 $0.7 $ & $1.1$ & 0.4 & 6.48 $\times 10^{-1}$&  1.21 $\times 10^{-3}$ & 30 & 100 & 508.59 \\
 $0.7 $ & $1.5$ & 0.1 & 7.79 $\times 10^{-1}$&  4.23 $\times 10^{-4}$ & 15 & 96 & 275.03 \\
 $0.7 $ & $1.7$ & 0.4 & 1.04 $\times 10^{0}$&  2.46 $\times 10^{-4}$ & 13 & 113 & 275.21 \\
 $0.7 $ & $1.9$ & 0.1 & 1.36 $\times 10^{0}$&  1.36 $\times 10^{-3}$ & 8 & 108 & 180.85 \\
 \bottomrule
\end{tabular}}
\end{table}
\par Finally, we present the runs of the method for various values of the regularization parameter $\gamma$. We note at this point that as $\gamma$ is changed, the solution of the equality constrained problem is significantly altered. In light of this, we adjust the inequality constraints of the problem for each value of $\gamma$, in order to ensure that the optimal solution will lie \textit{strictly} within the bounds. That way, we are able to compare the convergence behavior of ADMM, for instances with different regularization values, $\gamma$. The results are summarized in Table \ref{Experiment Full: Constraints everywhere regularization}.

\begin{table}[!ht]
\centering
\caption{Inequalities on both variables: Varying regularization (with $N = 50^3$, $\alpha = 0.7$, $\beta = 1.3$).\label{Experiment Full: Constraints everywhere regularization}}
\scalebox{0.8}{
\begin{tabular}{rrrrrrrrrr}
      \toprule
   \multirow{2}{*}{$\bm{\gamma}$} &   \multirow{2}{*}{$\bm{y_a}$} &    \multirow{2}{*}{$\bm{u_a}$} & \multirow{2}{*}{$\bm{\delta}$} & \multirow{2}{*}{$\bm{\mathcal{E}_{L^2}(y-\bar{y})}$}& \multirow{2}{*}{\textbf{Dual Inf.}}    & {\textbf{Iter.}}      &  & \multirow{2}{*}{\textbf{Time (s)}}  \\   \cline{7-8}
& & & & & & {PCG} & {ADMM} & \\ \midrule
$10^{-2}$ & $-2$ &$-100$ & 0.1& 1.77 $\times 10^{0}$(*)& 1.38 $\times 10^{-3}$   & 11 & 87 & 190.99 \\
$10^{-4}$ &$-7$ &$-400$ & 0.4 & 5.60 $\times 10^{-1}$(*)& 4.88 $\times 10^{-3}$   & 10 & 36 & 74.20 \\
$10^{-6}$  &$-9$ & $-2,800$ & 10 & 1.28 $\times 10^{-1}$(*) & 6.03 $\times 10^{-4}$ & 8 & 47 & 71.13 \\
$10^{-8}$& $-9 $ & $-4,000$ & 100 & 1.13 $\times 10^{-1}$(*)&  2.18 $\times 10^{-4}$ & 6 & 32 & 44.70 \\
$10^{-10}$& $-9 $ & $-4,000$ & 100 & 1.13 $\times 10^{-1}$(*)&  2.18 $\times 10^{-4}$ & 5 & 32 & 40.77 \\
 \bottomrule
\end{tabular}}
\end{table}
\par We can observe that the proposed approach is sufficiently robust with respect to the problem parameters. The linear systems that have to be solved within ADMM require a small number of PCG iterations for a wide range of parameter choices. Furthermore, ADMM achieves convergence to a 4-digit accurate primal solution in a reasonable number of iterations, making the method overall efficient. In light of the generality of the approach (established in Section \ref{Section A structure preserving method}), the numerical results are very promising, and we conjecture that the proposed method can be equally effective for a very wide range of FDE optimization problems.
\section{Conclusions} \label{Section conclusions}
\noindent In this paper, we proposed the use of an Alternating Direction Method of Multipliers, for the solution of a large class of PDE-constrained convex quadratic optimization problems. Firstly, under some general assumptions, and by using the theory of Generalized Locally Toeplitz sequences, we showed that the linear system arising at every ADMM iteration preserves the GLT structure of the PDE constraints. We then associated a symbol to the aforementioned linear system while providing and analyzing some alternatives for preconditioning it efficiently. Subsequently, we focused on solving two-dimensional, time-dependent FDE-constrained optimization problems, with box constraints on the state and/or control variables. Using the Grünwald--Letnikov finite difference method, and by employing a discretize-then-optimize approach, we solved the resulting problem in the discretized variables. Given the underlying structure of such discretized problems, we designed a recursive linear algebra based on FFTs, using which we solved the associated ADMM linear systems through a Krylov subspace solver alongside a multilevel circulant preconditioner. We demonstrated how one can restrict the storage requirements to order of $N$ (where $N$ is the grid size), while requiring only $O(N \log N)$ operations for every iteration of the Krylov solver. We further verified that the number of Krylov iterations required at each ADMM iteration is independent of the grid size.  As a proof of concept, we implemented the method, and demonstrated its scalability, efficiency, and generality.
\par While the paper is focused on a special type of FDE optimization problems, we have provided a suitable methodology that has a significantly wider range of applicability. As a future research direction, we would like to employ the method, and the associated preconditioners, to various extensions of the current model, by allowing non-constant diffusion coefficients, employing higher-order discretization methods, or by solving FDEs posed in higher space-time dimensions.

\section*{Acknowledgements}
\noindent SP acknowledges financial support from a Principal's Career Development PhD scholarship at the University of Edinburgh, as well as a scholarship from A. G. Leventis Foundation. JWP acknowledges support from the Engineering and Physical Sciences Research Council (EPSRC) grant EP/S027785/1 and a Fellowship of The Alan Turing Institute. SL acknowledges financial support from a School of Mathematics PhD studentship at the University of Edinburgh, and JG acknowledges support from the EPSRC grant EP/N019652/1.
\bibliographystyle{abbrv}

\bibliography{references} 

\end{document}